\newcommand{\nablat}{\underline{\nabla }}
\newcommand{\D}{\underline{D}}
\newcommand{\A}{A}
\newcommand{\Aa}{\mathscr{A}}
\newcommand{\Bb}{\mathscr{B}}
\newcommand{\Bmod}{\mathcal{X}}
\newcommand{\bo}{\mathbb{B}}
\newcommand{\C}{\mathbbm{C}}
\newcommand{\Cc}{\mathscr{C}}
\newcommand{\clgen}{\gamma}
\newcommand{\E}{\mathbb{E}}
\newcommand{\modHilbL}{\mathcal{E}}
\newcommand{\HH}{H_3} 
\newcommand{\HHLie}{\mathfrak{h}_3} 
\newcommand{\Hh}{{H}}
\newcommand{\inj}{\hookrightarrow}
\newcommand{\Ll}{\mathcal{L}}
\newcommand{\mc}{\mathcal}
\newcommand{\modHilb}{E}
\newcommand{\N}{\mathbbm{N}}
\newcommandx*\QHM[2][1={},2={}]{{D^{#1}_{#2}}}
\newcommandx*\QHMl[2][1={},2={}]{{\mathscr{D}^{#1}_{#2}}}
\newcommandx*\modQHMl[2][1={},2={}]{{\mathscr{M}^{#1}_{#2}}}
\newcommand{\R}{\mathbbm{R}}
\newcommand{\Tt}{\mathcal{T}}
\newcommand{\PiCrG}{{B \rtimes _E \Z}}
\newcommand{\Z}{\mathbbm{Z}}
\newcommand{\eg}{\textsl{e.g.}\hspace{1ex}}
\newcommand{\ie}{\textsl{i.e.}\hspace{1ex}}
\newcommand{\locCit}{\textsl{loc.\ cit.\ }}
\DeclareMathOperator{\Diag}{Diag}
\DeclareMathOperator{\dom}{Dom}
\DeclareMathOperator{\Ev}{ev}
\DeclareMathOperator{\id}{id}
\DeclareMathOperator{\GNS}{GNS}
\DeclareMathOperator{\Graph}{graph}
\DeclareMathOperator{\Tr}{Tr}
\DeclareMathOperator{\Span}{Span}
\DeclareMathOperator{\Ext}{Ext}
\theoremstyle{definition}		
\newtheorem{Def}{Definition}[section]
\newtheorem{Ex}[Def]{Example}
\theoremstyle{plain}
\newtheorem{Prop}[Def]{Proposition}
\newtheorem{Th}[Def]{Theorem}
\newtheorem{Lem}[Def]{Lemma}
\newtheorem{Cor}[Def]{Corollary}
\theoremstyle{remark}
\newtheorem{Rem}[Def]{Remark}
\title{Spectral Triples and Generalized Crossed Products}
\author{Olivier \textsc{Gabriel} \& Martin \textsc{Grensing}}
\begin{document}
\maketitle
\thispagestyle{empty}
\abstract{We give a construction for lifting spectral triples to crossed products by Hilbert bimodules. The spectral triple one obtains is a concrete unbounded representative of the  Kasparov product of the spectral triple and the Pimsner-Toeplitz extension associated to the crossed product by the Hilbert module. To prove that the lifted spectral triple is the above-mentioned Kasparov product, we rely on operator-$*$-algebras and connexions.}\\[5mm]
\noindent{\textbf{\textsl{Keywords:} Spectral triple, Pimsner algebra, Kasparov product, connexion, generalized crossed product, $K$-homology, $KK$-theory, boundary map.}}
\medbreak
\textsl{Mathematics Subject Classification (2010):} 46L80, 19D55, 
 	46L08, 
	46L55, 
	46L87, 
	58B34.
	
\tableofcontents

\section{Introduction} 

The starting point of this article is double: 
on the one hand, a standard way to construct new $C^*$-algebras is given by the crossed product construction; on the other hand, a ``smooth'' structure on a $C^*$-algebra is -- according to Connes' philosophy -- given by a so-called spectral triple. When the $C^*$-algebra is commutative, this corresponds -- under appropriate hypotheses -- to the choice of a $spin^c$ structure and associated Dirac operator, as is shown by the reconstruction theorem \cite{MR3032810}.  It is therefore natural to ask in which way the two notions can be combined, in other words, to study the permanence properties of spectral triples. Examples for spectral triples on crossed products by ordinary automorphisms have been around for a long time, recently they have been studied more intensively for example in \cite{DynSystBellMarcolli} and \cite{TrSpPiCr-Paterson}. 

The present paper aims to give a conceptual way for constructing spectral triples on a certain class of crossed product-like algebras, which contains the crossed products by $\Z$ and the (commutative) $S^1$-principal bundles -- see Proposition \ref{Prop:CommGCP}. This class fits within the general realm of Pimsner algebras \cite{PiCrG}, but our approach is more simply phrased within the setting of Abadie, Eilers and Exel in \cite{AbadieEE}. In fact, a construction of Pimsner in \cite{PiCrG} (which is based on a construction from Jones' paper \cite{MR696688}) shows that one can always write a Pimsner algebra as a generalized crossed product -- at the cost of changing the algebras and modules involved.

To sketch the basic idea, let us however start with the simplest case -- that of crossed products. Suppose for the moment that $\alpha$ is an isomorphism of a unital $C^*$-algebra $B$.  Recall first of all that Pimsner and Voiculescu proved  in \cite{MR587369} the existence of the so-called Pimsner-Voiculescu six-term exact sequence, which we state in its $K$-homological version:
\[\xymatrix{ K^0(B)\ar[d]^{\partial}&K^0(B)\ar[l]&K^0(B\rtimes_\alpha\mathbb{Z})\ar[l]\\
K^1(B\rtimes_\alpha\mathbb{Z})\ar[r]&K^1(B)\ar[r]&K^1(B)\ar[u]^{\partial}
}
\]
Here the vertical boundary maps are generalizations of the Fredholm index and are, in fact, given by the Kasparov product with the $KK^1$-class corresponding to the so-called Toeplitz-extension  of $B$ by $B\rtimes_\alpha\mathbb{Z}$ obtained by replacing the unitary in the crossed product by an isometry and identifying $KK^1$ with $\Ext$. 

Recalling that in the unbounded (or Baaj-Julg picture) of $KK$-theory \cite{MR715325} the $KK$-classes are actually given by spectral triples, it is natural to view this boundary map as a lifting procedure along the Toeplitz-extension for spectral triples on the base algebra $B$. And in fact, representing the crossed product canonically on $\ell^2(\mathbb{Z})\otimes \Hh$, where $\Hh$ is some faithful nondegenerate representation of $B$, one finds the completely natural operator
$N\otimes\gamma+1\otimes D$ associated to any unbounded representative $D$ with grading $\gamma$ of a $K^0$-class (here $N$ denotes the unbounded operator in $\ell^2(\mathbb{Z})$ defined by the identity function on $\Z$). Under some natural hypotheses, this operator is almost immediately seen to represent the Kasparov product of the class of the Toeplitz extension with the class $[D]\in KK_*(A,\mathbb{C})$ determined by $D$.

Suppose now that $E$ is a $B$-$B$-$C^*$-correspondence, \ie $E$ is a Hilbert module over $B$ and we are given a homomorphism $\phi:B\to\mathbb{B}_B(E)$. Suppose further that $E$ is full over $B$ and $\phi$ is injective. Then one may construct (see \cite{PiCrG} for details) an extension $\gamma_E$
$$
0\rightarrow I\rightarrow \mc{T}_E\rightarrow\mc{O}_E\rightarrow 0.
$$
Here the ideal $I$ is closely related to the base algebra $B$, and in good cases Morita equivalent to it.  Furthermore $A:=\mathcal{O}_E$ corresponds to the crossed product (if $E=B$ and $B$ acts on the left \textsl{via} an automorphism $\alpha$, then $\mathcal{O}_E\approx B\rtimes_\alpha\mathbb{Z}$). 

Using the same idea as above, we get a long exact sequence in $K$-homology and a boundary map $\partial_E:K^i(I)\to K^{i+1}(B)$ (which is given by the Kasparov product with the class of the extension). Thus we may try to lift spectral triples from $I$ to $A$ by just choosing an explicit representative of the Kasparov product of some fixed spectral triple on $I$ and the class $\partial_E$.

Now as mentioned above, already in the setting of crossed products there is a regularity condition a spectral triple on the base algebra $B$ has to satisfy in order to be ``liftable'' in a simple manner. In the more general setting of Pimsner algebras, this condition is more difficult to track down. For example, it is easily seen that for a covering with more than one leaf, the commutator with the Dirac on the base is not sufficiently well behaved to allow the simple lifting $N\otimes\gamma+1\otimes D$ indicated above. Moreover, the Cuntz algebra $\mathcal{O}_n$ is a Pimsner algebra but as it is purely infinite, it is traceless; this is another example which illustrates the difficulties involved in the construction of such liftings.

The regularity condition we use in this paper in order to construct liftings is the existence of certain connexions for the operator  $D$ on the base algebra. This connexion allows us to lift the spectral triple on the base algebra to an operator on the Pimsner algebra. This is certainly not the only possible approach, but the availability of techniques from \cite{SpecFlowKL}, which originate from \cite{arXiv:0904.4383}, have lead us to this approach. Ultimately, these results are based on \cite{Kucpub}. However, we need to adopt the results from \cite{SpecFlowKL} to the case of Kasparov products of odd and even unbounded modules which causes some technicalities deferred to the appendix.

Recall that by Kasparov's stabilization theorem \cite{MR587371} any countably generated Hilbert module over $B$ is a direct summand of the standard module $\Hh_B$.
Our main result concerning the Pimsner algebras $\mc{O}_E$ can then be formulated as follows (see Theorem \ref{Thm:Main}; for the precise definition of a Rieffel-spectral triple we refer the reader to section \ref{Section:RieffelSpecTrip}):
\begin{Th} 
Suppose that $B$ is unital and equipped with a Rieffel spectral triple $(\Hh,D_h)$ with grading $\gamma_h$ and that $E$ is a finitely generated projective Hilbert bimodule; assume $E$ is equipped with a two-sided Hermitian  $D_h$-connexion $\nabla$, then the lifting along the Pimsner-Toeplitz extension of of $(H,D_h)$ can be represented by the spectral triple 
$$(X\otimes_B\Hh,D_v\otimes \gamma+1\otimes_{\nablat}D_h)$$
where $X$ is the Hilbertmodule-completion of $B\rtimes_E\mathbb{Z}$ for the scalar product associated to the conditional expectation $\E(a):=\int_{S^1}\lambda.a\; d\lambda$ and $\nablat$ is a connexion explicitly constructed from $\nabla$ (compare \ref{Prop:ExtConn}).
\end{Th}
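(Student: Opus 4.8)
The plan is to realize the operator $D_v\otimes\gamma+1\otimes_{\nablat}D_h$ as an unbounded representative of an internal Kasparov product and then to invoke Kucerovsky's criterion \cite{Kucpub}, in the odd/even form developed in the appendix, to identify its class with the image of $[D_h]$ under the boundary map $\partial_E$. First I would isolate the two factors. The Pimsner-Toeplitz extension $\gamma_E$ of $0\to I\to\mc{T}_E\to\mc{O}_E\to 0$ determines a class in $KK^1(I,B)$, for which I would write down an explicit unbounded representative on the Hilbert $B$-module $X$ using the vertical operator $D_v$ built from the circle action (the analogue of the number operator $N$ on $\ell^2(\Z)$), with grading $\gamma$. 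The second factor is the given Rieffel spectral triple $(\Hh,D_h)$ as a class in $KK^0(B,\C)$. Since $E$ is finitely generated projective, $X$ is countably generated and the internal tensor product $X\otimes_B\Hh$ is precisely the Hilbert space carrying the lifted triple; I would check that the canonical representation of $A=\mc{O}_E$ on it is the one induced by the conditional expectation $\E$.

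Second, I would establish that $1\otimes_{\nablat}D_h$ is a well-defined self-adjoint regular operator. The naive operator $1\otimes D_h$ fails to preserve the algebraic tensor product, which is exactly why the connexion $\nablat$, constructed in \ref{Prop:ExtConn} from the two-sided Hermitian $D_h$-connexion $\nabla$, is needed: it supplies the correction term on a dense domain. Finite generation and projectivity of $E$ guarantee that $\nabla$, and hence $\nablat$, is bounded and Hermitian, so that the full operator $\Dt:=D_v\otimes\gamma+1\otimes_{\nablat}D_h$ is essentially self-adjoint and regular on a suitable core. Compactness of the resolvent then follows by combining the resolvent compactness of $D_h$ (from the Rieffel triple hypothesis) with that of $D_v$ in the fibre direction, while boundedness of the commutators $[\Dt,a]$ for $a$ in a dense $*$-subalgebra of $A$ follows from the Hermitian connexion property together with the Leibniz rule. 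This shows $(X\otimes_B\Hh,\Dt)$ is a spectral triple.

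Finally, I would verify Kucerovsky's two conditions. The connection condition amounts to showing that, for $x$ in a dense submodule, the graded commutator of $\Dt$ with the creation operator $T_x$ is bounded, which is precisely what it means for $\nablat$ to be a $D_h$-connexion; the summand $D_v\otimes\gamma$ then reproduces the first factor $D_v$ up to bounded terms. The positivity (domain) condition reduces, after the usual computation, to a semiboundedness estimate modulo bounded operators, controlled by the curvature of $\nabla$, which is bounded under the finite-projective hypothesis.

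The genuine difficulty, and the reason for the material deferred to the appendix, is the bookkeeping forced by mixing an odd cycle (the extension, of $KK^1$-type) with an even cycle (the spectral triple, graded by $\gamma_h$): the Clifford-type factor $\gamma$ and the grading on $X\otimes_B\Hh$ must be arranged so that Kucerovsky's hypotheses genuinely apply to the odd/even product. I expect the main analytic obstacle to be proving self-adjointness and regularity of the sum $\Dt$ in the presence of the connexion's curvature term, since the two summands do not exactly anticommute and one must control the cross terms on the common core rather than appeal to a naive orthogonality argument.
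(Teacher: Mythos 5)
Your proposal is correct and follows essentially the same route as the paper: represent the vertical class by $(X,D_v)$, extend $\nabla$ to the Hermitian connexion $\nablat$ of Proposition \ref{Prop:ExtConn}, and identify $D_v\otimes\gamma+1\otimes_{\nablat}D_h$ as the unbounded Kasparov product via the odd/even adaptation of the Kucerovsky--Kaad--Lesch criterion (Theorem \ref{Theorem:products}), which is exactly what Proposition \ref{Prop:CorrUnboundedKmod} and the appendix accomplish. One clarification: the cross terms you single out as the main analytic obstacle are in fact harmless --- because $\nablat$ preserves the gauge degree, the commutator $[D_v\otimes\gamma,\,1\otimes_{\nablat}D_h]$ vanishes on the core $\Aa\odot_{B_1}\dom(D_h)$, so no curvature-controlled semiboundedness estimate is needed beyond what the correspondence conditions already encode (and note that Hermicity of $\nabla$ is a standing hypothesis, not a consequence of finite projectivity, which instead delivers the frame on $X$ and closability of $\nablat$).
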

Here $\lambda.a$ refers to the natural circle-action on $B\rtimes_E\mathbb{Z}$.

The standard example (see Example \ref{Ex:BiMod}) of such a situation is given by a Hermitian line bundle $V$ over a locally compact Hausdorff base space $X$ by taking $E=\Gamma(V)$ and $B=C(X)$. The action by multiplication of $B$ on the left on $E$ may then be twisted by an isomorphism $\sigma$ of $B$ by setting $b\xi:=\sigma^{-1}(b)\xi$ and ${}_B\langle\xi|\eta\rangle:=\sigma(\langle\xi|\eta\rangle_B)$. For example, the quantum Heisenberg manifolds introduced by Rieffel \cite{RieffelDefQuant} fall into this category. This standard example motivates the notation $D_h$ (horizontal class) for the spectral triple on the base and $D_v$ (vertical class) for the Kasparov module determined by the Toeplitz extension.

\noindent The article is organized as follows:
\begin{itemize}
\item Section \ref{section:connexionsandopmod} contains preliminary results regarding operator $*$-algebras and operator $*$-modules, as well as some technical results concerning stability under holomorphic functional calculus. We put some emphasis on closability of derivations and connexions as this issue is sometimes overlooked. In fact, only under this closability condition the canonical operator-$*$-algebra closure (\cite{SpecFlowKL}, Proposition 2.6) of  the domain of a $*$-derivation on a $C^*$-algebra is a subalgebra of the $C^*$-algebra. Along the same lines, we construct operator-$*$-module closures for every connexion associated to a derivation (see Corollary \ref{Cor:FrameE1}).
\item Section \ref{Section:RieffelSpecTrip} recalls some basic facts regarding spectral triples and associated derivations. It then introduces a class of spectral triples associated to Lie group actions. These spectral triples were studied in \cite{TrSpLieGpGG}, where it is shown that they can be associated to ergodic Lie group actions and include many natural examples. However, the quantum Heisenberg manifolds are not included in this framework as shown in \locCit, and are one of the motivating examples for this article.
\item In Section \ref{Sec:crossed prods}, we give a brief account of a particularly well-behaved class of Pimsner algebras, the so-called generalized crossed products from \cite{AbadieEE}. These algebras carry a natural $S^1$-action, which however admits a non-empty fixed point subalgebra $B$; we thus do not obtain a spectral triple from this Lie group action, but rather a spectral triple ``with coefficients'', \ie a $KK$-class. This class is the vertical class (introduced properly in Definition \ref{Def:verticalclass}) which will be used later on in order to lift spectral triples from the base $B$ to the generalized crossed product. Along the way we give a complete characterization of commutative generalized crossed products.
\item The Section \ref{Section:Derext} contains the main technical tools of this article. We introduce two-sided connexions and prove how they can be extended to generalized crossed products. We show that under the condition of existence of a generalized frame and under some natural conditions on the connexion itself the extended connexion is closable -- a property which is essential as noted above. Using essentially the same idea, we also obtain a derivation on the generalized crossed product and thus a sub-$*$-algebra. 
\item In Section \ref{Section:TheLifting} we finally tackle the main theorem already stated above. We thus define the vertical class and calculate its products with a given Rieffel spectral triple on the base algebra. In order to do so, we show that the connexion constructed from two-sided connexion in Section \ref{Section:Derext} provides us with  a correspondence from the vertical class to the class of the spectral triple. We may then apply the (modified) criterion of Kaad and Lesch, which is proved in the appendix, to conclude. 
\item The last Section \ref{Sec:Examples} analyses the construction for the case of quantum Heisenberg manifolds. We show that the spectral triple obtained in this case coincides with a spectral triple constructed from scratch in  \cite{GeomQHM}. The result could be formulated in a somewhat more general framework, but we refrain from doing so here in order to keep to a moderate size. 
\item We have relegated to the appendix the somewhat technical task of adapting the results of Kaad and Lesch to the case of odd-even Kasparov products which we need in order to apply our results to  quantum Heisenberg manifolds. The modifications are mostly purely algebraic, depending basically on some yoga of graded tensor products. 
\end{itemize}

\section{Connexions and operator modules}\label{section:connexionsandopmod}

In this section, we study the notion of operator $*$-module introduced in \cite{SpecFlowKL}. We give explicit examples of such modules, based on derivations of the base algebra. 

We refer to \cite{MR1325694, Black} for missing details about Hilbert modules. We remind the reader of the following:
\begin{Def}
\label{Def:Frame}
A finite family $(\xi_j)_{j=1}^m$ in a (right) Hilbert $A$-module $E$ is called a (right) \emph{frame} if and only if
$$ \sum_{j=1}^m \xi_j \langle \xi_j, \cdot \rangle = \id_E .$$
\end{Def}

\begin{Rem}
\label{Rem:Frame}
A right Hilbert module $E$ admits a frame if and only if it is finitely generated (f.g.) and projective (see Proposition 3.9 p.89 of \cite{EltNCG}).
\end{Rem}

We also need the notion of $C^*$-correspondence. Since several definitions of ``correspondences'' exist in the literature, we make explicit the definition we use. It is called a ``Hilbert bimodule'' in \cite{SpecFlowKL} -- but we keep this term for a more symmetric structure (see Definition \ref{Def:HilbBimod} below).
\begin{Def}
Given $C^*$-algebras $A$ and $B$, an $A$-$B$ \emph{$C^*$-correspondence} $E$ is
\begin{itemize}
\item
a right Hilbert module over $B$ whose scalar product we denote $\langle \cdot , \cdot \rangle _B$ or $\langle \cdot , \cdot \rangle $ when the context is clear;
\item
a map $\pi \colon A \to \bo_B(E)$, where $\bo_B(E)$ is the set of maps $T \colon E \to E$ such that there is a $T^* \colon E \to E$ with $\forall \xi, \eta \in E$, 
$$
 \langle \xi, T \eta \rangle = \langle T^* \xi, \eta \rangle .
$$ 
In particular, $T$ is right $B$-linear and bounded.
\end{itemize}
\end{Def}
We write $a \xi$ instead of $\pi (a) \xi$ when this notation is unambiguous.

\medbreak

There is a straightforward notion of tensor product of $C^*$-correspondences (see \cite{Black}, II.7.4 p.147 or \cite{MR1325694}):
\begin{Def}
\label{Def:TensorProduct}
Given a (right-) $A$-Hilbert module $E$ and an $A$-$B$ $C^*$-correspondence $F$, the inner tensor product over $A$, denoted $E \otimes _A F$, is the $B$-Hilbert module obtained from the quotient of the algebraic tensor product $E \odot_\C F$ by the subspace generated by
$$ \{ \xi a \otimes \eta - \xi \otimes \pi (a) \eta| \xi \in E, \eta \in F, a \in A \} $$
by completing it for (the norm induced by) the scalar product:
$$
\langle \xi \otimes \eta, \xi' \otimes \eta' \rangle _B := \langle \eta, \langle \xi, \xi' \rangle_B \cdot \eta' \rangle_B .
$$
If $E$ is actually a $C$-$A$ $C^*$-correspondence, then the resulting tensor product $E \otimes _A F$ is a $C$-$B$ $C^*$-correspondence.

\smallbreak

We can iterate this construction to obtain $A$ $C^*$-correspondences $$E \otimes _A E \otimes _A \cdots \otimes_A E$$ for any positive integer $k$ (the tensor product contains $k$ factors) which we denote by  $E^{\otimes k}$.
\end{Def}

\begin{Def}\label{Def:Bmod}
Let $A$ be a $C^*$-algebra. A \emph{Banach $A$-bimodule} $\Bmod$ is a Banach space equipped with continuous left- and right actions from $A$.

In this paper, we focus more precisely on two types of such Banach $A$-bimodules:
\begin{enumerate}[(i)]
\item
the case of a $A$-$A$ $C^*$-correspondence $\Bmod$;
\item
given two $A$-$B$ $C^*$-correspondences $F_1$ and $F_2$, set $\Bmod := \bo_B(F_1 \to F_2)$. This is a Banach $A$-bimodule for the left- and right-actions are provided post- and pre-composition with $\pi_j(a)\in \bo_B(F_j)$. It is readily checked that these actions are continuous.
\end{enumerate}
\end{Def}
Here (i) is actually just a special case of (ii), as is shown by the following construction:
\begin{Def}
\label{Def:LinkingAlg}
If $F_1$ and $F_2$ are two $A$-$B$ $C^*$-correspondences, $F_1 \oplus F_2$ their $A$-$B$ $C^*$-correspondence direct sum, \ie the inner sum $B$-Hilbert module equipped with the diagonal $A$-action. The algebra $\bo_B(F_1 \oplus F_2)$ decomposes naturally into a direct sum which we can write in matrix notation:
$$ 
\bo_B(F_1 \oplus F_2) = 
\begin{pmatrix}
\bo_B(F_1) & \bo_B(F_2,F_1) \\
\bo_B(F_1,F_2) & \bo_B(F_2)
\end{pmatrix}.
$$
In particular, if $A$ is unital and $\Bmod$ is an $A$-$A$ $C^*$-correspondence, then setting $F_1 := A$ and $F_2 := \Bmod$, we recover $\bo_A(A \to \Bmod) = \Bmod \subseteq \bo_A(A \oplus \Bmod)$.
\end{Def}
Indeed, $A$ is unital and any $\varphi \in \bo_A(A \to \Bmod)$ is fully determined by $\varphi(1)= \xi \in \Bmod$. This bijection preserves the norm and therefore gives an isomorphism. 

In the sequel, we will only write $\bo(F)$ instead of $\bo_A(F)$ when the context is clear.

\begin{Rem}
This is just a variation on the theme of \emph{linking algebras} as introduced by Rieffel in \cite{MR0367670} (see for instance \cite{Black}, II.7.6.9 p.152 for an overview). 
\end{Rem}

\begin{Def}
\label{Def:Deriv}
A \emph{derivation} $\delta$ on a $C^*$-algebra $A$ with values in a Banach $A$-bimodule $\Bmod$ is a linear map defined on a dense $*$-subalgebra $\Aa \subseteq A$ with values in $\Bmod$ which satisfies for all $a, b \in \Aa$,
$$ \delta(a b) = \delta(a) b + a \delta(b) .$$
Such a derivation is \emph{closable} if the closure of its graph in $A \times \Bmod$ is the graph of a function. In other words, any sequence $a_n \in \Aa$ such that
$$
a_n \to 0\;\;\; \text{ and }\;\;\;
\delta (a_n) \to  y
$$
(convergence in $A$ and $\Bmod$, resp.) satisfies $y = 0$.

\medbreak

If $\Bmod$ admits an involution which is compatible with the action of $A$, $\delta$ is a \emph{$*$-derivation} if $\delta(a)^* = \delta(a^*)$ whenever the involved terms are well-defined.
\end{Def}
Given a derivation $\delta$ in the sense of Definition \ref{Def:Deriv}, defined on a subalgebra $\Aa$, we want to construct the associated ``$C^1$-functions'' (more precisely almost everywhere Lipschitz functions, see Lemma 1 in \cite{NCG}). More formally, we construct an \emph{operator $*$-algebra} $A_1$ (see Definition 2.3 p.8 of \cite{SpecFlowKL}), notion which in turn depends on that of \emph{operator space}. This later concept is well-understood (see \eg \cite{SubspacesRuan, CPOABlecher}), suffice it to say here that these are Banach spaces which admit a suitable extension of their norms to finite matrices and that they are precisely (norm-)closed subspaces of $C^*$-algebras. Naturally associated to these structures are \emph{completely bounded morphisms}, \ie morphisms that extend to matrices of arbitrary size while keeping a bounded norm. An \emph{operator $*$-algebra}, as introduced first in Definition 3.2.3 of \cite{arXiv:0904.4383} and Definition 3.3 of \cite{CPInvolOAIvankov}:
\begin{Def}
\label{Def:OpEtAlg}
An \emph{operator $*$-algebra} $A_1$ is an operator space s.t.
\begin{itemize}
\item
its multiplication $m \colon A_1 \times A_1 \to A_1$ is completely bounded;
\item
there is a completely bounded involution $\dagger \colon A_1 \to A_1$ -- the extension to matrices being provided by transposing matrices (and applying $\dagger$ entrywise);
\end{itemize}
\end{Def}
An important example of such operator $*$-algebra is provided by the following adaptation of Proposition 2.6 in \locCit
\begin{Prop}
\label{Prop:DefB1}
Let $F_1$ and $F_2$ be two $B$-$C$ $C^*$-correspondences with faithful left-actions $\pi_1$, $\pi_2$ of $B$ and a derivation $\delta$ from $B$ to $\bo_C(F_1 \to F_2)$ which is defined on the dense subalgebra $\Bb \subseteq B$. Define an algebra morphism from $\Bb$ to $\bo(F_1 \oplus F_2)$ by
$$
\rho(b) = 
\begin{pmatrix}
\pi_1(b) & 0 \\ \delta(b) & \pi_2(b)
\end{pmatrix}.
$$
The completion $B_1$ of $\Bb$ for $\| b\|_1 := \| \rho(b) \|$ is a dense subalgebra of $B$ if and only if $\delta$ is closable. In this case, $B_1$ has the following properties:
\begin{enumerate}
\item
Both the inclusion $B_1 \inj B$ and $b \mapsto \delta(b)$ are completely bounded.
\item
$B_1$ is stable under holomorphic calculus.
\item
If $\Bb$ is a $*$-algebra, $F_1 = F_2 =: F$ and $\delta(b^*) = U \delta(b)^* U$ for some unitary $U \in \bo_C(F)$ which \emph{commutes} with $\pi(b)$ for $b \in \Bb$, then $B_1$ is an operator $*$-algebra.
\end{enumerate}
\end{Prop}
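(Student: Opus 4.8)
The plan is to read everything off the isometric embedding $\rho\colon\Bb\to\mathcal M$, where $\mathcal M:=\bo(F_1\oplus F_2)$ is a $C^*$-algebra, so that by construction $B_1$ is a closed subalgebra of $\mathcal M$ with $\|b\|_1=\|\rho(b)\|$.

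For the equivalence, first note that since $\pi_1$ is faithful the $(1,1)$-entry gives $\|b\|\le\|b\|_1$, so the inclusion $\Bb\inj B$ extends to a contractive homomorphism $\iota\colon B_1\to B$. A class in $B_1$ is represented by a $\|\cdot\|_1$-Cauchy sequence $(b_n)$ in $\Bb$; then $\rho(b_n)$ converges in $\mathcal M$ to a lower-triangular limit whose diagonal blocks are $\pi_1(\lim b_n),\pi_2(\lim b_n)$ and whose $(2,1)$-block is $\lim\delta(b_n)$. Hence $\iota$ is injective precisely when $b_n\to0$ in $B$ forces $\lim\delta(b_n)=0$, that is, exactly when $\delta$ is closable; in that case $B_1\cong\dom(\overline\delta)$ carries the graph topology and, containing $\Bb$, is dense in $B$. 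Conversely, if $\delta$ is not closable, a sequence $b_n\to0$ with $\delta(b_n)\to y\neq0$ is automatically $\|\cdot\|_1$-Cauchy and produces a nonzero element of $\ker\iota$, so $B_1$ fails to be a subalgebra of $B$.

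Assume now $\delta$ closable. For (1), observe that $\pi_1(b)=P_1\rho(b)P_1$ and $\delta(b)=P_2\rho(b)P_1$ are corner compressions ($P_j$ the projection onto $F_j$), hence given by completely contractive maps of $\rho(b)$; composing with the completely isometric $\rho$ shows both $\iota\colon B_1\inj B$ and $b\mapsto\delta(b)$ are completely bounded. For (3), the content is a single conjugation identity: with $A:=\left(\begin{smallmatrix}0&U^*\\U&0\end{smallmatrix}\right)\in\mathcal M$, which is a self-adjoint unitary because $U$ is unitary, one checks using $\pi(b^*)=\pi(b)^*$, the hypothesis $\delta(b^*)=U\delta(b)^*U$, and the commutation of $U$ with $\pi(b)$ (hence with $\pi(b)^*$) that $\rho(b^*)=A\,\rho(b)^*\,A$. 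In particular $\|b^*\|_1=\|b\|_1$, so $*$ extends to an involution of $B_1$; and since $A$ is unitary while the $C^*$-adjoint is isometric on every matrix level, the entrywise-transpose involution $[b_{ij}]\mapsto[b_{ji}^*]$ is completely isometric. As multiplication on the subalgebra $\rho(B_1)$ of the $C^*$-algebra $\mathcal M$ is completely contractive, $B_1$ is an operator $*$-algebra.

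The real work is (2), and this is where I expect the main obstacle. Because $\pi_1,\pi_2$ are faithful and $\rho(b)$ is triangular, one gets $\sigma_{\mathcal M}(\rho(b))=\sigma_B(b)$, so it suffices to show $\rho(B_1)$ is inverse-closed in $\mathcal M$; equivalently, for $b\in B_1$ with $\lambda\notin\sigma_B(b)$ one must prove $(\lambda-b)^{-1}\in B_1=\dom(\overline\delta)$. The natural candidate is $\overline\delta((\lambda-b)^{-1})=(\lambda-b)^{-1}\overline\delta(b)(\lambda-b)^{-1}$, obtained formally by differentiating $(\lambda-b)(\lambda-b)^{-1}=1$. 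This is immediate for $|\lambda|>\|b\|$ via the Neumann series, but the delicate point is to reach every $\lambda$ in the resolvent set, including spectral holes, since that set need not be connected. I would resolve this by a Newton iteration: starting from some $c_0\in\Bb$ with $\|1-c_0(\lambda-b)\|<1$ (available by density of $\Bb$ in $B$) and setting $c_{n+1}:=c_n\bigl(2-(\lambda-b)c_n\bigr)$, one has $1-(\lambda-b)c_{n+1}=(1-(\lambda-b)c_n)^2$, so $c_n\to(\lambda-b)^{-1}$ in $B$, while a direct estimate via the Leibniz rule shows $\overline\delta(c_n)$ converges to the candidate; closedness of $\overline\delta$ then yields $(\lambda-b)^{-1}\in\dom(\overline\delta)$. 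This gives $\sigma_{B_1}(b)=\sigma_B(b)$ and hence stability under holomorphic functional calculus. Alternatively one may invoke the Blackadar--Cuntz theory of differential seminorms, of which $b\mapsto\|\delta(b)\|$ is a first-order example.
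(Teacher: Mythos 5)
Your proof is correct, and on the if-and-only-if, the density, and point (1) it follows the paper's route essentially verbatim: identify $B_1$ with the closure of the graph of $\delta$ inside $\bo(F_1\oplus F_2)$, observe that injectivity of the induced map $B_1\to B$ (through the faithful corner $\pi_1$) is exactly closability of $\delta$, and obtain complete boundedness from the completely contractive corner compressions of the completely isometric $\rho$. Where you genuinely diverge is in (2) and (3), which the paper does not prove in-line but outsources: for (2) it simply invokes the ``standard result'' on closed derivations (Lemma 2, p.~247 of Connes' book), and for (3) it cites Proposition 2.6 of Kaad--Lesch. Your Newton iteration $c_{n+1}=c_n\bigl(2-(\lambda-b)c_n\bigr)$, with the squared residuals $1-(\lambda-b)c_{n+1}=\bigl(1-(\lambda-b)c_n\bigr)^2$ (and likewise for $1-c_n(\lambda-b)$), combined with closedness of $\overline\delta$, is a clean self-contained substitute for the cited lemma: it correctly addresses the point that Neumann series only reach the unbounded component of the resolvent set, and, unlike classical proofs that pass through $a^*a$, it uses no $*$-structure (note only that it needs a unit, so one should work in the unitization with $\overline\delta(1)=0$, a point the paper glosses as well). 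Your conjugation identity $\rho(b^*)=A\rho(b)^*A$ with $A=\begin{pmatrix}0&U^*\\U&0\end{pmatrix}$, followed by $\mathrm{diag}(A,\dots,A)$ at each matrix level, is exactly the mechanism behind the Kaad--Lesch proposition, so (3) is a faithful in-line reproduction of the cited result. One quibble: the side claim $\sigma_{\mathcal{M}}(\rho(b))=\sigma_B(b)$ is not justified by triangularity and faithfulness alone -- the unitary $\begin{pmatrix}S&1-SS^*\\0&S^*\end{pmatrix}$, $S$ the unilateral shift, shows a triangular element can be invertible while its diagonal entries are not; the inclusion $\sigma_B(b)\subseteq\sigma_{\mathcal{M}}(\rho(b))$ does hold here, but needs an extra step (invertibility of $\rho(b)-\lambda$ yields a right inverse for $\pi_1(b)-\lambda$ and a left inverse for $\pi_2(b)-\lambda$, whence $(b-\lambda)(b-\lambda)^*$ and $(b-\lambda)^*(b-\lambda)$ are invertible by spectral permanence). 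This is harmless, since the claim is dispensable: your Newton argument proves directly that $(\lambda-b)^{-1}\in\dom(\overline\delta)=B_1$ for every $\lambda\notin\sigma_B(b)$, which is all that holomorphic stability requires.
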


\begin{proof}
We start by the proof of the inclusion $B_1 \inj B$. Considering $\rho$ as above, by definition $B_1$ is included in $\bo(F_1 \oplus F_2)$. We have two continuous linear maps from $B_1$ to $\bo(F_1)$ and $\bo(F_1 \to F_2)$ given on elements of $\Bb$ by $\phi(b) = \pi_1(b)$ and $\psi(b) = \partial(b)$. By definition of $B_1$, these maps are continuous for $\| \cdot \|_1$ and therefore extend to $B_1$. 

\smallbreak

However, if $\delta$ is not closable, $\phi$ is not injective. Indeed, in this case there is a sequence $b_n \in  \Bb$ s. t. $b_n \to 0$ and $\partial(b_n) \to y \neq 0$. Denote $b$ the limit of this Cauchy sequence in $B_1$, then $\phi( b ) = 0$ but $\psi( b ) = y \neq 0$.

\smallbreak

Conversely, if $\delta$ is closable, since $\pi_1$ is faithful, the image of $\Bb$ by the map $b \mapsto (\phi(b), \psi(b))$ is (isomorphic to) the graph of $\delta$. $B_1$ is then (isomorphic to) the closure of this graph in $B \times \bo(F_1 \to F_2)$. If this closure is still the graph of a function, then $\phi$ is injective and $B_1 \inj B$. Moreover, $\Bb$ is dense and $\Bb \subseteq B_1$ -- which proves the density of $B_1$ in $B$. 

The stability under holomorphic functional calculus is then a straightforward consequence of the ``standard result'' Lemma 2 p.247 of \cite{NCG}, 6.$\alpha$.

\medbreak

The inclusion $B_1 \inj B$ is completely bounded: the tensor product of $B_1$ by finite matrices comes with the restriction of the unique $C^*$-norm on $\bo(F_1 \oplus F_2) \otimes M_N(\C)$. The map from $B_1 \otimes M_N(\C)$ to $B \otimes M_N(\C)$ is thus contractive.

\medbreak

For the last point, we refer to Proposition 2.6 in \cite{SpecFlowKL}.
\end{proof}

We now introduce connexions associated to derivations, \ie our definition is slightly different from  \cite{NCG} III.1. Definition 5 p.227.
\begin{Def}
\label{Def:Conn}
Given a (right) $A$-Hilbert module $E$ and a derivation $\delta\colon\Aa\to \Bmod$ on $A$ with values in an $A$-$A$ $C^*$-correspondence $\Bmod$, an associated  (right) \emph{connexion} on $E$ is a linear map $\nabla\colon\modHilbL\to E\otimes_B\Bmod$ defined on a dense subset $\modHilbL \subseteq E$ such that
\begin{itemize}
\item
$\modHilbL$ is a right $\Aa$-module and
\item
for all $a\in \Aa$, $\xi \in \modHilbL$, 
\begin{equation}
\label{Eqn:Connexion}
 \nabla (\xi a) = \nabla (\xi) a + \xi \otimes \delta (a).
\end{equation}
\end{itemize}
The connexion $\nabla $ is called \emph{closable} if $\delta$ is closable and for any sequence $\xi_n \in \modHilbL$ which satisfies $\xi_n \to 0$ and $\nabla (\xi_n) \to \eta$, we have $\eta = 0$.

If $\nabla $ is closable, we denote the domain of its closure by $\overline{\dom(\nabla)}$.
\end{Def}
%
%
%

\begin{Rem}
It is readily checked that $\overline{\dom(\nabla )}$ is a $B_1$-module. We use the symbol $\nabla$ to denote both the connexion and its closure.
\end{Rem}

\begin{Prop}
\label{Prop:Connexion}
Any connexion $\nabla $ on a Hilbert $B$-module $E$, associated to a derivation $\delta$, induces a derivation $\partial $ on $\bo_B(E)$ with values in $\bo_B(E \to E \otimes \Bmod)$ defined by
\begin{equation}
\label{Eqn:DefPartial}
 \big( \partial (T)  \big)(\xi) = \nabla (T(\xi)) - (T \otimes \id_X)(\nabla(\xi)) .
\end{equation}
Moreover, if $\nabla $ is closed, defined on a dense subset $E_1 := \overline{\dom(\nabla)} \subseteq E$, $E$ is finitely generated projective and $\langle E_1, E_1 \rangle \subseteq B_1$ then $\partial $ is densely defined and is closable.

\smallbreak

The domain of its closure is:
\begin{equation}
\label{Eqn:DomPartial}
 \overline{\dom(\partial )} := \{ T \in \bo_B(E) | T(\overline{\dom(\nabla )}) \subseteq \overline{\dom(\nabla )} \},
\end{equation}
which is therefore stable under holomorphic functional calculus.
\end{Prop}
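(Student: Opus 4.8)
The plan is to treat $\partial$ first as a purely algebraic object and then to use that $E$ is finitely generated projective in order to make formula \eqref{Eqn:DefPartial} bounded. The Banach-bimodule structure on $\bo_B(E\to E\otimes_B\Bmod)$ is the one of Definition \ref{Def:Bmod}(ii) with $F_1=E$ and $F_2=E\otimes_B\Bmod$: the left action of $S\in\bo_B(E)$ is $S\cdot\Phi=(S\otimes\id)\circ\Phi$ and the right action is $\Phi\cdot S=\Phi\circ S$. A direct computation from \eqref{Eqn:DefPartial} then yields $\partial(ST)=(\partial S)\circ T+(S\otimes\id)\circ(\partial T)$, i.e.\ the Leibniz rule for these actions (so that $\partial(\id)=0$); this is the easy part. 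The real issue is that $\partial(T)$ must take values in $\bo_B(E\to E\otimes_B\Bmod)$, i.e.\ be bounded and adjointable, and this is where the hypotheses enter.

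The key preliminary, and what I expect to be the main obstacle, is to produce a finite frame $(\eta_k)_k$ of $E$ whose vectors all lie in $E_1:=\overline{\dom(\nabla)}$. This is a spectral-invariance statement: starting from any frame of the f.g.\ projective module $E$ (Remark \ref{Rem:Frame}) I would approximate its vectors by elements of the dense submodule $E_1$; since $\langle E_1,E_1\rangle\subseteq B_1$ the resulting Gram matrix lies in $M_m(B_1)$, and as $B_1\inj B$ is stable under holomorphic functional calculus (Proposition \ref{Prop:DefB1}(2)) so is $M_m(B_1)\inj M_m(B)$, which should allow me to correct the approximants into an honest frame still contained in $E_1$. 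This is precisely the point where $E_1$ is recognised as a finitely generated projective module over $B_1$, i.e.\ the content of Corollary \ref{Cor:FrameE1}; I regard establishing it cleanly as the hard step. (A useful auxiliary fact, provable directly by the closed graph theorem for the Banach spaces $E_1$ and $B_1$ carrying the two graph norms, is that $\xi\mapsto\langle v,\xi\rangle$ is bounded $E_1\to B_1$ for each fixed $v\in E_1$.)

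Granting such a frame, I would expand $\zeta=\sum_k\eta_k\langle\eta_k,\zeta\rangle$ for $\zeta\in E_1$, apply the connexion identity \eqref{Eqn:Connexion} to both $\nabla(T\zeta)$ and $\nabla\zeta$, and observe that the terms $T\eta_k\otimes\delta\langle\eta_k,\zeta\rangle$ cancel, leaving
\[
\partial(T)(\zeta)=\sum_k\nabla(T\eta_k)\,\langle\eta_k,\zeta\rangle-(T\otimes\id)\Big(\sum_k\nabla(\eta_k)\,\langle\eta_k,\zeta\rangle\Big),
\]
valid for every $T$ with $T(E_1)\subseteq E_1$ (so that $T\eta_k\in E_1$) and every $\zeta\in E_1$. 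Each summand is a bounded adjointable finite-rank operator $E\to E\otimes_B\Bmod$, post-composed in the second term with the bounded $T\otimes\id$; hence $\partial(T)\in\bo_B(E\to E\otimes_B\Bmod)$ and extends by continuity to all of $E$. The same frame identity shows that $\{T:T(E_1)\subseteq E_1\}$ coincides with $\Span\{\,\xi\langle\eta,\cdot\rangle:\xi,\eta\in E_1\,\}$ (each such operator preserves $E_1$ because $E_1$ is a $B_1$-module and $\langle E_1,E_1\rangle\subseteq B_1$), a subalgebra dense in $\bo_B(E)=\Kk_B(E)$; so $\partial$ is densely defined.

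It remains to identify the closure. Closability and the description \eqref{Eqn:DomPartial} both come from the single observation that $\nabla$ is closed: if $T_n(E_1)\subseteq E_1$ with $T_n\to T$ in $\bo_B(E)$ and $\partial(T_n)\to Y$, then for $\xi\in E_1$ one has $T_n\xi\to T\xi$ and $\nabla(T_n\xi)=\partial(T_n)\xi+(T_n\otimes\id)\nabla\xi\to Y\xi+(T\otimes\id)\nabla\xi$, so closedness of $\nabla$ gives $T\xi\in E_1$ and $\partial(T)\xi=Y\xi$; the case $T=0$ yields closability, and the general case yields both inclusions in \eqref{Eqn:DomPartial}. Finally, $\overline{\dom(\partial)}$ is exactly the operator-$*$-algebra completion attached to the closable derivation $\partial$ from $\bo_B(E)$ into $\bo_B(E\to E\otimes_B\Bmod)$, whose left action on the first correspondence $E$ is the faithful identity representation; thus Proposition \ref{Prop:DefB1}(2) applies (its proof only uses faithfulness of that action) and delivers the stability under holomorphic functional calculus.
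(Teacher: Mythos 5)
Your proof is correct in substance, but it genuinely reorders the paper's logic, and the one step you yourself flag as hard needs more care than your sketch gives it. The paper does \emph{not} construct a frame inside $E_1$ within this proof: for density it perturbs an arbitrary finite-rank decomposition $T=\sum_j\eta_j\langle\xi_j,\cdot\rangle$ into $\dom(\nabla)$, and for boundedness it uses the resulting invertible perturbation $T'$ of $\id_E$ and the two-sided (non-self-adjoint) decomposition $\xi=\sum_j\eta_j'\langle\xi_j',T'^{-1}\xi\rangle$ to extend $\partial T$ by $B$-linearity. The honest frame with entries in $E_1$ appears only \emph{afterwards}, in Corollary \ref{Cor:FrameE1}, whose proof applies the holomorphic stability of $\overline{\dom(\partial)}$ --- that is, the present proposition --- to $V=T'^{-1/2}$. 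So you must not cite Corollary \ref{Cor:FrameE1} here (that would be circular), and your independent Gram-matrix argument has to carry the full weight. It can, but note the wrinkle you glossed over: the Gram matrix $G=\big(\langle\zeta_j,\zeta_k\rangle\big)\in M_m(B_1)$ of the perturbed vectors is in general \emph{not} invertible (for the unperturbed frame, $G$ is a projection, so $\sigma(G)$ clusters near $\{0,1\}$), so ``correcting'' cannot mean forming $G^{-1/2}$. The repair: with $\Psi\big((b_j)\big):=\sum_j\zeta_jb_j$ and $\Theta(\xi):=\big(\langle\zeta_j,\xi\rangle\big)_j$ one has $T':=\Psi\Theta$ invertible, $G=\Theta\Psi$, $\sigma(G)\setminus\{0\}=\sigma(T')\setminus\{0\}$, and $\Psi f(G)=f(T')\Psi$ for $f$ holomorphic near $\sigma(G)\cup\sigma(T')$; choosing $h$ holomorphic on a neighbourhood of $\sigma(G)$, equal to $t^{-1/2}$ near $\sigma(T')$ and locally constant near $0$ (legitimate since $0\notin\sigma(T')$), the vectors $\eta_j:=\sum_k\zeta_k\,h(G)_{kj}$ lie in $E_1$ because $h(G)\in M_m(B_1)$ by holomorphic stability of $M_m(B_1)$, and $\sum_j\eta_j\langle\eta_j,\cdot\rangle=h(T')^2\,T'=\id_E$.

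Granting this, the remainder of your argument is sound and in places tighter than the paper's. Your frame expansion of $\partial(T)$ delivers boundedness \emph{and} adjointability at one stroke (the paper only argues continuity of its $B$-linear extension and leaves adjointability implicit), and your identification $\{T:T(E_1)\subseteq E_1\}=\Span\{\xi\langle\eta,\cdot\rangle:\xi,\eta\in E_1\}$ is correct precisely because the $E_1$-frame makes every $E_1$-preserving operator finite rank over $E_1$, via $T=\sum_k(T\eta_k)\langle\eta_k,\cdot\rangle$. Your closability and domain argument coincides with the paper's, with its two halves (closability of $\partial$, and closedness of $\partial$ on the domain \eqref{Eqn:DomPartial}) efficiently merged into one limit computation resting on closedness of $\nabla$. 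Finally, your appeal to Proposition \ref{Prop:DefB1}(2) --- correctly observing that its proof uses only faithfulness of the first left action, which here is the identity representation of $\bo_B(E)$ on $E$ --- is equivalent to the paper's direct citation of the standard lemma of Connes, on which Proposition \ref{Prop:DefB1} itself rests. In sum: the paper keeps this proposition self-contained with minimal machinery and then harvests the $E_1$-frame for free in Corollary \ref{Cor:FrameE1}; you pay for the frame up front, and are rewarded with a shorter, more structural density and boundedness argument.
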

\begin{proof}
If $\dom(\nabla )$ is the domain of the connexion $\nabla $, then we set:
$$ \dom(\partial ) := \{ T \in \bo_B(E) | \, T (\dom(\nabla )) \subseteq \dom(\nabla ) \} .$$
and define $\partial T$ by \eqref{Eqn:DefPartial}.

It remains to prove that $\partial $ is a derivation:
\begin{align*}
&\partial (T T')(\xi) 
= \nabla ( T T'(\xi)) - (TT' \otimes 1)(\nabla \xi) \\
=& \nabla ( T T'(\xi)) - (T \otimes 1)\nabla(T'(\xi)) + (T \otimes 1) \nabla (T'(\xi)) - (TT' \otimes 1)\nabla \xi \\
=& (\partial T)(T'(\xi)) + (T \otimes 1) \big[ \partial (T')(\xi) \big].
\end{align*}
Whenever $\partial T$ is defined, it is $\Bb$-linear:
\begin{align*}
&(\partial T)(\xi b) = \nabla (T(\xi b)) - (T \otimes 1)(\nabla(\xi b))\\
= &\nabla (T(\xi)) b + T(\xi) \delta(b) - (T \otimes 1)( \nabla (\xi)) b - T(\xi) \delta(b) \\
=& \nabla (T(\xi)) b - (T \otimes 1)( \nabla (\xi)) b = \partial (T)(\xi) b.
\end{align*}

Under the additional assumptions, we prove the density of $\dom(\partial )$ : since $E$ is f.g. projective, any element $T \in \bo_B(E)$ can be written
$$ T = \sum_{j=1}^m \eta_j \langle \xi_j, \cdot \rangle ,$$
where $\eta_j$ and $\xi_j$ are elements of $E$. Since $\modHilbL$ is dense in $E$, we can find ``perturbations'' $\eta_j'$ and $\xi_j'$ in $\modHilbL$ of $\eta_j$ and $\xi_j$ respectively. It is then easy to prove using the Hilbert module norm that if $\eta_j'$ and $\xi_j'$ are chosen ``close'' to $\eta_j$ and $\xi_j$, then
$$ T' := \sum_{j=1}^m \eta_j' \langle \xi_j', \cdot \rangle $$
is ``close'' to $T$. Furthermore, $\langle \modHilbL, \modHilbL \rangle \subseteq B_1$ ensures that $T' \in \dom(\partial )$. This concludes the proof that $\partial $ is densely defined.

\medbreak

If we apply the previous argument to $T = \id_E$, we get an element $T' \in \dom(\partial )$ which is invertible in $\bo_B(E)$. This implies in particular that $(\eta_j')$ generates $E$ over $B$. This property ensures that we can extend $\partial T$ -- which is \textsl{a priori} only defined on $\dom(\nabla )$ -- by $B$-linearity to an element of $\bo(E \to E \otimes \Bmod)$. Indeed,  if $V$ is the inverse (in $\bo(E)$) of $T$, then 
$$ \xi = T' V(\xi) = \sum_{j} \eta_j' \langle \xi'_j, V(\xi) \rangle .$$
This provides a \emph{unique} decomposition of $\xi$ in terms of $\eta'_j$. We then extend $\partial T$ by setting:
$$ (\partial T)(\xi) := \sum_{j} (\partial T)(\eta_j') \langle \xi'_j, V(\xi) \rangle .$$
Since for any fixed $j$, the map $\xi \mapsto \langle \xi'_j, V(\xi) \rangle$ is continuous, the map $\partial T$ thus extended is continuous and coincides with the ``natural definition'' of $\partial T$ on the dense subset $\dom(\nabla )$. This continuous extension is therefore unique and $\partial T \in \bo(E \to E \otimes \Bmod)$.

\medbreak

We now prove that if $\nabla $ is closable, then $\partial $, defined naturally on 
\begin{equation}
\label{Eqn:DomPartial2}
\dom(\partial ) := \left\{ T \in \bo_C(E) \middle| T(\overline{\dom(\nabla )}) \subseteq \overline{\dom(\nabla )} \right\},
\end{equation}
 is closable: if we have a sequence $T_n$ in $\dom(\partial )$ such that both $T_n \to 0$ and $\nabla T_n \to V$ (convergence in $\bo_B(E)$ and $\bo_B(E \to E \otimes \Bmod)$, respectively), then $V = 0$. 

\smallbreak

The proof is very similar to that of Lemma \ref{Lem:CloseDeriv}: we consider $\xi \in \overline{\dom(\nabla )}$ and denote $\eta := \nabla (\xi) \in E \otimes \Bmod$. Since $T_n \to 0$, we have $T_n \nabla (\xi) \to 0$ and consequently
$$ (\partial T_n)(\xi) = \nabla(T_n (\xi)) - (T_n \otimes 1) \nabla(\xi) \to V(\xi) .$$
The sequence $T_n(\xi) \in \overline{\dom(\nabla )}$ tends to $0$ and $\nabla (T_n(\xi))$ admits the limit $V(\xi)$ for $n \to 0$. As $\nabla $ is closed, we must have $V(\xi) = 0$. This is true on the dense subspace $\overline{\dom(\nabla )} $ of $E$, therefore $V = 0$ and $\partial $ it closable.

\medbreak

Finally, the inclusion of sets given by \eqref{Eqn:DomPartial2} is actually an equality:
consider
$$
\Graph(\partial  ) := \{ (T, \partial  T), T ( \overline{\dom(\nabla )}) \subseteq \overline{\dom(\nabla )}  \} \subseteq \bo_{B}(E) \times \bo_B(E \to E \otimes \Bmod) .
$$
A Cauchy sequence $(T_n, \partial T_n)$ in $\Graph(\partial )$ is a sequence such that both $T_n$ and $\partial T_n$ converge in $\bo_B(E)$ and $\bo_B(E \to E \otimes \Bmod)$, respectively. Denote $T_\infty$ the limit of $T_n$ in $\bo_B(E)$. For any $\xi \in E_1$, we have $T_k \xi \to T_\infty \xi$. Moreover, the definition \eqref{Eqn:DefPartial} can be read:
$$ \partial (T_k(\xi)) = (\partial T_k)(\xi) + (T_k \otimes 1)(\partial \xi) .$$
By hypothesis, $\partial T_k$ is norm convergent and $(T_k \otimes 1)(\partial \xi) \to (T_\infty  \otimes 1) (\partial \xi)$, hence $\partial (T_k(\xi))$ converges in $E \otimes \Bmod$. This implies that $T_k(\xi)$ actually converges in $E_1$ and thus $T_\infty(\xi) \in E_1$. This in turn implies that $T_\infty (E_1) \subseteq E_1$, \ie $T_\infty \in \overline{\dom(\partial )}$. 

\bigbreak

The last property of the Lemma is an immediate consequence of our previous results and of the ``standard result'' Lemma 2 p.247 of \cite{NCG}, 6.$\alpha$.
\end{proof}

For our purposes, the notion of \emph{operator $*$-module} is the essential companion of operator $*$-algebra as introduced in Definition \ref{Def:OpEtAlg} above. We call \emph{standard module} $A_1^\infty $ over an operator $*$-algebra $A_1$ the closure of the finite column-vectors with entries in $A_1$. We are now ready to state (compare \cite[Definition 3.4]{SpecFlowKL}):
\begin{Def}
\label{Def:OpEtMod}
An \emph{operator $*$-module} $E_1$ over an operator $*$-algebra $A_1$ is an operator space together with 
\begin{itemize}
\item
a completely bounded (right) action of $A_1$ on $E_1$;
\item
a completely bounded pairing $\langle \cdot , \cdot \rangle$ with values in $A_1$, which satisfies the same algebraic conditions as for a Hilbert module;
\item
a completely bounded selfadjoint idempotent $P \colon A_1^\infty \to A_1^\infty $ s.t. $P A_1^\infty $ is isomorphic to $E_1$ (as operator space).
\end{itemize}
\end{Def}

\begin{Rem}
The abstract notion underlying our constructions is more clearly expressed in terms of the following alternative definitions: call \emph{derivation} on an algebra $A$ a pair of linear maps $(\pi,\delta)$ where $\pi:A\to  B$ is a homomorphism to an algebra $B$ and $\delta:\Aa\to B$ is a linear map defined on a subalgebra $\Aa \subseteq A$ such that for all $a, a' \in \Aa$:
$$ \delta(a a') = \delta(a) \pi(a') + \pi(a) \delta(a').$$
One obtains an associated notion of connexion as follows: a $\delta$-connexion on $E$ is given by an $\Aa$-submodule $\modHilbL\subseteq E$ and linear maps $\pi_E:E\to D$, $\nabla\colon\modHilbL\to D$ such that for all $a\in \Aa$ and $\xi\in \modHilbL$:
$$\nabla(\xi a)=\nabla(\xi)\pi_A(a)+\pi_E(\xi)\delta(a).$$
 $\modHilbL$ will be referred to  as the domain of $\nabla$. This yields a submodule and subalgebra of $M_2(D)$ by identifying
\begin{align*}&a\approx\begin{pmatrix} \pi_A(a)&0\\\delta(a)&\pi_A(a)\end{pmatrix} &\xi\approx\begin{pmatrix} \pi_E(\xi)&0\\\nabla(\xi)&\pi_E(\xi)\end{pmatrix}\end{align*}
which thus carry natural structures of an operator algebra and operator module. However, such definition would require a careful study of the dependence on $D$, $\delta$ and $\nabla$ which we want to avoid for the moment.
\end{Rem}
The previous Proposition \ref{Prop:Connexion} has the following consequences:
\begin{Cor}
\label{Cor:FrameE1}
If 
\begin{itemize}
\item
$\delta$ and $\nabla $ are respectively a densely defined and closable derivation on $B$ and a $\delta$-connexion on a finitely generated projective $C^*$-correspondence $E$;
\item
we have for $E_1 := \overline{\dom(\nabla )}$, the domain of the closure of $\nabla$, that $\langle E_1, E_1 \rangle \subseteq B_1$,
\end{itemize}
then 
\begin{enumerate}[(i)]
\item
there is a frame of $E$ consisting of elements of $E_1$,
\item
both the inclusion $E_1 \inj E$ and the map $E_1\to E,\;\xi \mapsto \nabla (\xi)$ are completely bounded.
\end{enumerate}
\end{Cor}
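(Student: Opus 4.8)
The plan is to deduce both claims from Proposition \ref{Prop:Connexion}, the decisive input being that the domain $\overline{\dom(\partial)}$ of the induced derivation $\partial$ is stable under holomorphic functional calculus. I start with \emph{(i)}. As $E$ is finitely generated projective, Remark \ref{Rem:Frame} provides a frame $(\xi_j)_{j=1}^m$, \ie $\sum_j\xi_j\langle\xi_j,\cdot\rangle=\id_E$; these vectors need not lie in $E_1$, but $E_1=\overline{\dom(\nabla)}$ is dense in $E$, so I perturb each $\xi_j$ to some $\xi_j'\in E_1$ and put $T':=\sum_j\xi_j'\langle\xi_j',\cdot\rangle\in\bo_B(E)$. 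A routine Hilbert-module estimate shows that for the perturbations small enough $T'$ is as close to $\id_E$ as we wish, hence positive and invertible.

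The point is that $T'\in\overline{\dom(\partial)}$: for $\eta\in E_1$ one has $\langle\xi_j',\eta\rangle\in\langle E_1,E_1\rangle\subseteq B_1$, and since $E_1$ is a right $B_1$-module (the Remark following Definition \ref{Def:Conn}), $\xi_j'\langle\xi_j',\eta\rangle\in E_1$; thus $T'(E_1)\subseteq E_1$, which by \eqref{Eqn:DomPartial} means $T'\in\overline{\dom(\partial)}$. Being positive and invertible, $T'$ has spectrum a compact subset of $(0,\infty)$, so $z\mapsto z^{-1/2}$ is holomorphic near it and $S:=(T')^{-1/2}$ is given by holomorphic functional calculus. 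By the final assertion of Proposition \ref{Prop:Connexion}, $\overline{\dom(\partial)}$ is stable under this calculus, so $S\in\overline{\dom(\partial)}$, that is $S(E_1)\subseteq E_1$. Putting $e_j:=S\xi_j'\in E_1$ and using that $S$ is selfadjoint and commutes with $T'$, one gets $\sum_j e_j\langle e_j,\cdot\rangle=S\,T'\,S=\id_E$, so $(e_j)_{j=1}^m$ is a frame of $E$ contained in $E_1$.

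For \emph{(ii)}, the frame $(e_j)$ realizes $E_1$ as a direct summand of the standard module $B_1^m$: the analysis map $\xi\mapsto(\langle e_j,\xi\rangle)_j$ lands in $B_1^m$ (again since $\langle E_1,E_1\rangle\subseteq B_1$), the synthesis map $(b_j)_j\mapsto\sum_j e_jb_j$ lands in $E_1$, and their composite is $\id_{E_1}$; the associated selfadjoint idempotent is the Gram matrix $P=(\langle e_j,e_k\rangle)_{j,k}\in M_m(B_1)$, which equips $E_1$ with its operator $*$-module structure (Definition \ref{Def:OpEtMod}). The inclusion $E_1\inj E$ now factors as analysis, the completely bounded inclusion $B_1\inj B$ (Proposition \ref{Prop:DefB1}(1)), and synthesis into $E$, so it is completely bounded. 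For $\nabla$ I expand $\xi=\sum_j e_j\langle e_j,\xi\rangle$ and use the Leibniz rule \eqref{Eqn:Connexion}:
$$\nabla\xi=\sum_j(\nabla e_j)\langle e_j,\xi\rangle+\sum_j e_j\otimes\delta(\langle e_j,\xi\rangle);$$
both terms depend on $\xi$ only through $(\langle e_j,\xi\rangle)_j\in B_1^m$, combined with the fixed vectors $\nabla e_j,e_j$ and the completely bounded maps $B_1\inj B$ and $b\mapsto\delta(b)$ (Proposition \ref{Prop:DefB1}(1)), whence $\nabla\colon E_1\to E\otimes\Bmod$ is completely bounded.

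I expect the main obstacle to be the passage in \emph{(i)} from an approximate to a genuine frame inside $E_1$. Perturbation alone only yields $T'\approx\id_E$, and the natural correction is the normalizing factor $(T')^{-1/2}$; the real content is that this factor preserves $E_1$, which is precisely where the closability of $\nabla$ and the hypothesis $\langle E_1,E_1\rangle\subseteq B_1$ enter, through the holomorphic-functional-calculus stability of $\overline{\dom(\partial)}$ from Proposition \ref{Prop:Connexion}. Everything after that is bookkeeping with the completely bounded estimates of Proposition \ref{Prop:DefB1}.
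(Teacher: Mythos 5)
Your part (i) is essentially the paper's own proof: perturb a frame into $E_1$, note that $T'=\sum_j\xi_j'\langle\xi_j',\cdot\rangle$ preserves $E_1$ (hence lies in $\overline{\dom(\partial)}$ of Proposition \ref{Prop:Connexion}), take $S=(T')^{-1/2}$ by holomorphic functional calculus, and conclude $(S\xi_j')$ is a frame in $E_1$; you even fill in the detail (why $T'(E_1)\subseteq E_1$, via $\langle E_1,E_1\rangle\subseteq B_1$ and the $B_1$-module property) that the paper dismisses as clear.

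For part (ii) you take a genuinely different route. The paper forms $F:=B\oplus E$ with the connexion $\nabla':=\delta\oplus\nabla$, applies Propositions \ref{Prop:Connexion} and \ref{Prop:DefB1} to the linking algebra $\bo(F)$ (Definition \ref{Def:LinkingAlg}) to get a completely bounded inclusion $\bo(F)_1\inj\bo(F)$ and a completely bounded extension of the induced derivation, and then restricts to the corner $E\simeq\bo_B(B\to E)$, using $\partial'(\xi)(1)=\nabla(\xi)$; notably, it never uses the frame for (ii). You instead use the frame from (i) to identify $E_1$ with $PB_1^m$ via analysis/synthesis and the Gram idempotent, and deduce both complete boundedness claims from Proposition \ref{Prop:DefB1}(1) and the Leibniz rule. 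This is correct and more economical, with one caveat you should make explicit: your argument proves complete boundedness with respect to the frame-induced operator space structure on $E_1$, which you adopt by fiat (it is at least frame-independent, since transition Gram matrices have entries in $B_1$), whereas the paper's corner construction endows $E_1$ with the intrinsic graph structure coming from the matrices $\bigl(\begin{smallmatrix}\xi&0\\ \nabla\xi&\xi\end{smallmatrix}\bigr)$. Your Leibniz expansion shows precisely that the frame structure dominates the graph structure completely boundedly (synthesis is cb into the graph norm); the reverse matricial estimate --- complete boundedness of the analysis map $\xi\mapsto(\langle e_j,\xi\rangle)_j$ from the graph structure into $B_1^m$, \ie control of $\delta(\langle e_j,\xi\rangle)$ by $\|\xi\|$ and $\|\nabla\xi\|$ at all matrix levels --- is not automatic (the closed graph theorem gives boundedness level by level, not completely) and in the generality of this corollary would need something like the Hermitian identity \eqref{Eqn:CompScalProd2}, which is not assumed here and only enters later, in Proposition \ref{Prop:OpStarMod}. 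So your proof delivers everything the corollary is used for downstream (a cb inclusion, a cb connexion, and a projection with entries in $B_1$), while the paper's linking-algebra argument buys the statement for the intrinsic structure, without the frame and without Hermitian-ness.
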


\begin{Rem}
This implies in particular that $E_1$ is an operator module in the sense of \cite{SpecFlowKL}, Definition 3.1. Moreover, the frame of elements of $E_1$ induces a projection with entries in $B_1$ -- as required by Definition 3.4 in \cite{SpecFlowKL}.
\end{Rem}

\begin{proof}
Regarding the point (i), we use Remark \ref{Rem:Frame} to a obtain a (finite) frame $\xi_i$ in $E$, \ie
$$ \sum \xi_i \cdot \langle \xi_i, \cdot \rangle = \id_E. $$
Just as in the proof of Proposition \ref{Prop:Connexion}, we consider a perturbation $\zeta_j$ of $\xi_i$ which is ``close enough'' to $\xi_i$. We thus obtain an operator $\sum_{j=1}^m \zeta_j \cdot \langle \zeta_j, \cdot \rangle =: T \in \bo_{B}(E)$ which is ``close enough'' to $1_E$ -- and in particular invertible in $\bo_B(E)$. It is also clear that $T$ sends $\overline{\dom(\nabla )}$ to itself.

\smallbreak

We can then consider $V := T^{-1/2}$, which is defined by holomorphic functional calculus and is therefore in $\overline{\dom(\partial )}$ -- by Proposition \ref{Prop:Connexion}. By construction, $V$ belongs to the algebra generated by $T$ and since $T = T^*$, $V$ commutes with $T$ and $V = V^*$. We can evaluate:
$$ V T V = \id_E = \sum_{j} (V \zeta_j) \cdot \langle V \zeta_j, \cdot  \rangle $$
where we used that $V^* = V$. This equation precisely means that $\big( V \zeta_j \big)$ is a frame for $E$. The equality \eqref{Eqn:DefPartial} proves that $V \zeta_j \in E_1$ and therefore concludes the proof of point (i).

\bigbreak

Point (ii) requires to introduce $F := B \oplus E$. It is easy to see that $\nabla' := \delta \oplus  \nabla $ is a connexion on $F$. Since $\delta$ and $\nabla $ are closable, so is $\nabla'$. In the same way, since $E$ is f.g. projective, so is $F$. The compatibility of $F_1$ and $B_1$ with the scalar product follows from that of $E_1$ and $B_1$. We then rely on the linking algebra as in Definition \ref{Def:LinkingAlg}: applying Proposition \ref{Prop:Connexion} and Proposition \ref{Prop:DefB1} to $\nabla'$, we get a completely bounded inclusion $\bo(F)_1 \inj \bo(F)$ and a complete bounded map extending the derivation $\partial '$ on $\bo(F)$.

We can restrict and corestrict the injection to $E \subseteq \bo(F)$ as in Definition \ref{Def:LinkingAlg}, thereby obtaining a completely bounded inclusion $E_1 \inj E$. Consider now the restriction of $\partial '$ to $E \subseteq \bo(F)$: if $\xi \in E \simeq \bo(B \to E)$, we can evaluate $\partial '(\xi)$ on $1 \in B_1$:
$$ \partial'(\xi)(1) = \nabla ( \xi 1) - \xi \delta(1) = \nabla (\xi) . $$
By restricting and corestricting $\partial '$, we hence prove that $\xi \mapsto \nabla (\xi)$ from $E_1$ to $E \otimes \Bmod$ is completely bounded.
\end{proof}

\section{Dirac-Rieffel operators}\label{Section:RieffelSpecTrip}
\label{Sec:DiracLieGp}
\subsection{Spectral triples}
In this article, our main object of interest will be \emph{spectral triples} in the following sense:
\begin{Def} 
\label{Def:TrSp} Let $A$ be a unital $C^*$-algebra. An odd \emph{spectral triple}, also called odd \emph{unbounded Fredholm module}, is a triple $(\pi, \Hh, D)$ where:
\begin{itemize}
\item
$\Hh$ is a Hilbert space and $\pi \colon A\to \bo(\Hh)$ a faithful $*$-representation of $A$ by bounded operators on $\Hh$;
\item
a selfadjoint unbounded operator $D$ -- which we will call the \emph{Dirac operator} -- defined on the domain $\dom(D)$;
\end{itemize}
such that
\begin{enumerate}[(i)]
\item
$(1+D^2)^{-1/2}$ is compact,
\item
the subalgebra $\Aa$ of all $a\in A$ such that $[\pi(a),D]$ is bounded is dense in $A$.
\end{enumerate}
An \emph{even spectral triple} is given by the same data, but we further require that a grading $\gamma$ be given on $\Hh$ such that (i) $A$ acts by even operators, (ii) $D$ is odd.
\end{Def}
We recall for the readers' convenience that a commutator of bounded operator $T$ with an unbounded operator $D$ is said to be bounded (written $[D,T]\in\mathbb{B}(\Hh)$ if $T\dom(D)\subseteq \dom(D)$ and $[D,a]$ is a bounded operator on its domain. As is common practice, we denote the closure of $[D,a]$ again by $[D,a]$. We caution the reader that this condition is often obscured (see \eg \cite{BlackK}), and that this definition is indeed the only reasonable one for the definition of $KK$-theory (compare \cite{MR2679393}, Section 4).

	Spectral triples provide the standard examples of closed derivations on $\Aa$ with values in $\bo(\Hh) = \bo_\C(\Hh)$:
\begin{Lem}
\label{Lem:CloseDeriv}
Let $(\pi,\Hh,D)$ be a spectral triple on $A$, then $D$ determines a closable derivation on $\Aa$ with values in $\bo(\Hh)$ by $\delta_D(a) := [D, a]$. Its closure is the derivation $\bar\delta_D$ with domain $\dom(\bar\delta_D)=\{a\in A\big|\; [D,a]\in\bo(\Hh)\}$ defined by $\bar\delta_D(a)=[D,a]$.
\end{Lem}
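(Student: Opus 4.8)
The plan is to verify two things separately: that $\delta_D(a) := [D,a]$ is genuinely a derivation on the $*$-subalgebra $\Aa$, and that it is \emph{closed} (hence a fortiori closable), with closure coinciding with $\delta_D$ itself on the maximal domain $\{a \in A \mid [D,a] \in \bo(\Hh)\}$ — which, by the definition of spectral triple, is precisely $\Aa$. First I would check that $\Aa$ is a $*$-subalgebra and that $\delta_D$ obeys the Leibniz rule. For $a, b \in \Aa$ one has, as operators on $\dom(D)$, the identity $[D, ab] = [D,a]\,b + a\,[D,b]$; since $a\dom(D) \subseteq \dom(D)$ and both $[D,a]$ and $[D,b]$ extend to bounded operators, so does $[D,ab]$, whence $ab \in \Aa$ and the Leibniz rule holds. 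For the involution one computes, for $\xi, \eta \in \dom(D)$, that $\langle D\xi, a\eta\rangle - \langle a^*\xi, D\eta\rangle = \langle \xi, [D,a]\eta\rangle$; boundedness of $[D,a]$ forces $\eta \mapsto \langle a^*\xi, D\eta\rangle$ to be bounded, so by self-adjointness $a^*\xi \in \dom(D)$ and one reads off $[D, a^*] = -[D,a]^*$, which is bounded. Thus $\Aa$ is a dense (by hypothesis) $*$-subalgebra and $\delta_D$ is a well-defined derivation into $\bo(\Hh)$.

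The heart of the argument is the closedness of the graph, which I would establish directly and which yields both the closability and the description of the closure at once. Suppose $a_n \in \Aa$ with $a_n \to a$ in $A$ and $\delta_D(a_n) = [D, a_n] \to y$ in $\bo(\Hh)$. Fix $\xi \in \dom(D)$. Since $\pi$ is faithful, norm convergence in $A$ gives $a_n\xi \to a\xi$ and $a_n D\xi \to a D\xi$, while norm convergence in $\bo(\Hh)$ gives $[D,a_n]\xi \to y\xi$. Writing $D a_n \xi = [D, a_n]\xi + a_n D\xi$, I conclude $D a_n\xi \to y\xi + a D\xi$. Now I invoke that $D$, being self-adjoint, is closed: from $a_n\xi \to a\xi$ and $D a_n\xi \to y\xi + aD\xi$ it follows that $a\xi \in \dom(D)$ and $D(a\xi) = y\xi + aD\xi$, i.e. $[D,a]\xi = y\xi$. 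As $\xi \in \dom(D)$ was arbitrary and $\dom(D)$ is dense, this shows $a\dom(D) \subseteq \dom(D)$, that $[D,a] = y$ is bounded, and hence $a \in \Aa$ with $\delta_D(a) = y$. Therefore the graph of $\delta_D$ is closed; specializing to $a = 0$ recovers exactly the closability condition of Definition \ref{Def:Deriv}, and the full statement identifies $\bar\delta_D = \delta_D$ with domain $\{a \in A \mid [D,a] \in \bo(\Hh)\} = \Aa$.

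The only real obstacle is the bookkeeping with the domain of the unbounded operator $D$: every manipulation of $[D,a] = Da - aD$ must be carried out on vectors of $\dom(D)$, and the single nontrivial analytic input is the closedness of $D$ (equivalently $D = D^*$ on $\dom(D) = \dom(D^*)$), used once to promote the strong limit of $D a_n\xi$ into the statement that $a\xi$ lies in $\dom(D)$. No compactness of $(1+D^2)^{-1/2}$ is needed for this lemma; only self-adjointness of $D$ together with the density of $\dom(D)$ and of $\Aa$ enter.
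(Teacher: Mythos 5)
Your proof is correct and follows essentially the same route as the paper: both arguments hinge on the self-adjointness (hence closedness) of $D$, write $Da_n\xi = [D,a_n]\xi + a_nD\xi$ for $\xi \in \dom(D)$, and use closedness of $D$ to conclude $a\xi \in \dom(D)$ and $[D,a] = y$. Your additional verification of the Leibniz rule and of $[D,a^*] = -[D,a]^*$ is a sound (if optional) supplement, since the paper's Definition \ref{Def:TrSp} already takes $\Aa$ to be the maximal subalgebra of elements with bounded commutator.
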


\begin{Rem}
It follows that $\delta'(a) := i [D, a]$ is a $*$-derivation.
\end{Rem}
\begin{proof}
The proof is similar to that of Proposition \ref{Prop:Connexion}. The operator $D$ is  selfadjoint and thus closed. Let $a\in\dom(\bar\delta_D)$, \ie there exists a sequence $a_n \in \Aa$ with $a_n \to a$ in $A$ and $\delta_D(a_n)$ convergent in $\bo(\Hh)$. By hypothesis, $a_n \dom(D) \subseteq \dom(D)$. Given any $\xi \in \dom(D)$, $[D,a_n]\xi$ converges  and $a_nD\xi\to aD\xi$, thus $D(a_n \xi)$ is convergent. As $D$ is closed and $a_n\xi\to a\xi$, $a\xi\in \dom(D)$ and $Da\xi=\lim Da_n\xi$. It follows that for all $\xi\in\dom(D)$:
$$\bar\delta_D(a)\xi=\lim \bar\delta_D(a_n)\xi=\lim Da_n\xi-a_nD\xi=[D,a]\xi$$
thus $[D,a]\in\bo(\Hh)$ and $\bar\delta_D(a)=[D,a]$.
\end{proof}

\smallbreak
It is natural to define $ \Omega^1_D$, the \emph{$1$-forms associated to $D$} (see \cite{NCG}, VI.1. Proposition 4 p.549), as the closed linear span in $\bo(\Hh)$:
\begin{equation}
\label{Eqn:DefOmega1}
\Omega^1_D := \Span \Big\{ a_j^0 [D, a_j^1] \Big| a^0_j, a^1_j \in \Aa \Big\}.
\end{equation}

The space of $1$-forms $\Omega^1_D$ is clearly an $\Aa$-bimodule \textsl{via} the representation $\pi \colon \Aa \to \bo(\Hh)$. It is a Banach bimodule of the type mentioned in the point (ii) of Definition \ref{Def:Bmod}. In this context, we obtain a ``concrete'' (or ``represented'') version of Definition \ref{Def:Conn}:
\begin{Def}\label{Def:OperatorConnexion} (compare Chapter 6.1, Definition 8 of \cite{NCG})
Let $(\pi,\Hh,D)$ be a spectral triple on a unital $C^*$-algebra $A$, $E$ a Hilbert $A$-module. Denote by $\delta_D$ the derivation on $\Aa$ with values in $\Omega^1_D$ defined by $\delta_D(a)=[D,a]$. We call \emph{$D$-connexion on $E$} a connexion on $E$ with respect to $\delta_D$.
\end{Def}
\subsection{Rieffel Spectral Triples}

We recall briefly the definition of the \emph{complexified Clifford algebra} of dimension $n$ (see \cite{MR1031992} for  further details):
\begin{Def}
\label{Def:Clifford}
For $n \in \N$, we denote $\C l(n)$ the universal unital $C^*$-algebra generated by $n$ selfadjoint elements $e_j$ which satisfy the relations:
\begin{equation}
\label{Eqn:Spin}
 e_j e_k + e_k e_j = 2 \delta_{jk} .
\end{equation}
A $\Z/2 \Z$-grading on $\C l(n)$ is induced by the automorphism $h$ defined by $h(e_i) = - e_i$. 

In the rest of this article, we will consider a \emph{Clifford module} $S$, \ie a module $S$ equipped with a representation $\pi_{\C l(n)}$ of $\C l(n)$ and denote by $\clgen_i$ the operators $i \pi_{\C l(n)}(e_j) \in \bo(S)$. To unclutter notation, we will most of the time suppress the representation $\pi_{\C l(n)}$. For simplicity, we take $S$ to be finite dimensional.
\end{Def}

\begin{Rem}
The operators $\clgen_j$ satisfy:
\begin{align}
\label{Eqn:RelF}
\clgen^* &= - \clgen 
&
\clgen_j \clgen_k + \clgen_k \clgen_j &= - 2 \delta_{jk}.
\end{align}
\end{Rem}

The present work focuses on a particular form of Dirac operators:
\begin{Def}
\label{Def:RieffelTrSp}
We say that $(\pi, \Hh, D)$ is a \emph{Rieffel spectral triple} if 
\begin{itemize}
\item
$\Hh = \Hh_0 \otimes S$ for some Hilbert space $\Hh_0$ and Clifford module $S$;
\item
the operator $D$ is a \emph{Dirac-Rieffel operator}, \ie with respect to the previous decomposition, it can be written:
\begin{equation}
\label{Eqn:Dirac}
 D = \sum_{j=1}^n \partial _j \otimes \clgen_j ,
\end{equation}
with $\clgen_j$ as in Definition \ref{Def:Clifford} and unbounded operators $\partial _j$ on $\Hh$ such that $\partial _j^* = - \partial _j$;
\item
$ \partial _j^\Aa(a):=[\partial _j, a]$ for all $a \in \Aa$ defines $*$-derivations on $A$ with values in $A$.
\end{itemize}
\end{Def}

\begin{Rem}
A direct consequence of the above definition is: for all $a \in \Aa$,
\begin{equation}
\label{Eqn:CommRieffelTrSp}
 [D, a] = \sum_{j} \partial _j^\Aa(a) \otimes \clgen_j .
\end{equation}
\end{Rem}

In our previous article \cite{TrSpLieGpGG}, we proved that such Dirac-Rieffel operators can be constructed from ergodic actions of compact Lie groups -- see Theorem 5.4 therein. In the present article, we want to investigate the ``permanence properties'' of this class of spectral triples.

\section{\texorpdfstring{Operator $*$-modules}{Operator *-modules}}

We study connexions associated to Rieffel spectral triples as introduced in Definition \ref{Def:RieffelTrSp}. For the rest of this section, let $(\pi, \Hh, D)$ be a Rieffel spectral triple and $\delta_D:=[D,\;\cdot\;]$ the associated derivation (Lemma \ref{Lem:CloseDeriv}). As we will see, in this setting, there is a particularly simple notion of \emph{Hermitian connexion}. At the root of this lie the following results:
\begin{Lem}
\label{Lem:DecompNabla}
 The space of $1$-forms $\Omega^1_D$ satisfies:
\begin{equation}
\label{Eqn:CharOmega1D}
 \Omega^1_D \subseteq \pi(B) \otimes_\C \langle \clgen_1, \ldots , \clgen_n \rangle \subseteq \bo(\Hh)
\end{equation}
and has a natural $B$-Hilbert module structure.

\smallbreak

If $\nabla $ is a connexion defined on $\modHilbL$ and associated to $\delta_D$, then $\nabla(\xi) = \sum_{j} \nabla _j(\xi) \otimes \clgen_j$ for  certain ``components'' $\nabla _j$ with the following properties:
\begin{itemize}
\item
$\nabla _j \colon \modHilbL \to \modHilb$;
\item for all $j$, the following equality holds for $\xi \in \modHilbL$ and $b \in \Bb$:
\begin{equation}
\label{Eqn:DerivNabla}
\nabla _j(\xi b) = \nabla _j(\xi) b + \xi \, \partial _j(b).
\end{equation}
\end{itemize}
Conversely, given maps $\nabla _j$ defined on $\modHilbL$ which satisfy Equation \eqref{Eqn:DerivNabla}
\begin{equation}
\label{Eqn:RecompConnexion}
\nabla (\xi) := \sum_{} \nabla _j(\xi) \otimes \clgen_j .
\end{equation}
defines a concrete connexion associated to $D$.

Finally, $\nabla $ is closable if and only if all its components are closable.
\end{Lem}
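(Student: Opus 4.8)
The plan is to fix the finitely many generators $\clgen_1,\dots,\clgen_n$ as a basis of the finite-dimensional space $\langle\clgen_1,\dots,\clgen_n\rangle\subseteq\bo(S)$ and to phrase everything componentwise in this basis; each assertion then becomes a statement about a finite, hence topologically split, decomposition.

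First I would establish \eqref{Eqn:CharOmega1D} together with the module structure. By \eqref{Eqn:CommRieffelTrSp} we have $[D,a]=\sum_j\partial_j(a)\otimes\clgen_j$, where $\partial_j(a):=[\partial_j,a]\in B$ are the derivations of Definition \ref{Def:RieffelTrSp}; since $B$ acts on the first tensor factor of $\Hh=\Hh_0\otimes S$, one gets $a^0[D,a^1]=\sum_j\pi\bigl(a^0\partial_j(a^1)\bigr)\otimes\clgen_j$, and taking closed spans yields the inclusion, the space $\pi(B)\otimes_\C\langle\clgen_1,\dots,\clgen_n\rangle$ being closed because its Clifford factor is finite-dimensional. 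As the $\clgen_j$ are linearly independent, each $\omega\in\Omega^1_D$ has unique components $b_j\in B$, and I would put $\langle\omega,\omega'\rangle_B:=\sum_j b_j^*b_j'$. This is a positive-definite right-$B$-linear pairing realising $\Omega^1_D$ as a closed submodule of the free module $B^n$; completeness transfers because on $\pi(B)\otimes_\C\langle\clgen_j\rangle\cong B^n$ the operator norm and the module norm are equivalent. That same equivalence gives the estimate $\max_j\|b_j\|\le\|\omega\|\le\sqrt{n}\,\max_j\|b_j\|$, which will do most of the remaining work.

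For the decomposition, the coordinate maps $p_j\colon\Omega^1_D\to B$, $\omega\mapsto b_j$, are bounded right-$B$-module maps, so the $\nabla_j:=(\id_E\otimes p_j)\circ\nabla\colon\modHilbL\to\modHilb$ are well defined and $\nabla(\xi)=\sum_j\nabla_j(\xi)\otimes\clgen_j$; applying $\id_E\otimes p_j$ to \eqref{Eqn:Connexion} and using $p_j(\delta_D(b))=\partial_j(b)$ yields \eqref{Eqn:DerivNabla}. The converse is the same computation reversed: for maps $\nabla_j$ satisfying \eqref{Eqn:DerivNabla}, setting $\nabla(\xi):=\sum_j\nabla_j(\xi)\otimes\clgen_j$ gives $\nabla(\xi b)-\nabla(\xi)b=\sum_j\xi\,\partial_j(b)\otimes\clgen_j=\xi\otimes\delta_D(b)$, i.e.\ \eqref{Eqn:Connexion}. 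The one point I would be careful about here is that $\nabla$ really takes values in $\modHilb\otimes_B\Omega^1_D$ and not merely in $\modHilb\otimes_\C\langle\clgen_j\rangle$: the increment $\xi\otimes\delta_D(b)$ lies in the former by construction, and one checks the same for $\nabla(\xi)$ using that $\delta_D$ has values in $\Omega^1_D$.

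Finally, for the closability criterion I would feed the two-sided estimate into the definition of closability. It shows that, for $\xi_k\in\modHilbL$, the image $\nabla(\xi_k)$ is Cauchy, respectively converges to $0$, if and only if each $\nabla_j(\xi_k)$ is; moreover $\delta_D$ is closable by Lemma \ref{Lem:CloseDeriv}. The ``if'' direction is then immediate: if every component is closable and $\xi_k\to0$ with $\nabla(\xi_k)\to\eta=\sum_j\eta_j\otimes\clgen_j$, then each $\nabla_j(\xi_k)\to\eta_j$ forces $\eta_j=0$, hence $\eta=0$. I expect the converse to be the main obstacle: a sequence witnessing non-closability of a single $\nabla_{j_0}$ need not make the other $\nabla_j(\xi_k)$ converge, so one cannot read off closability of $\nabla$ directly. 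The step to nail down is that, because $\nabla(\xi_k)$ is constrained to the submodule $\modHilb\otimes_B\Omega^1_D$ -- equivalently, the components are coupled by the relations cutting out $\Omega^1_D$ inside $B^n$ -- a one-directional defect forces a genuine, componentwise convergent defect of $\nabla$; this coupling is the only genuinely delicate ingredient.
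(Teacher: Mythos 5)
Your proposal tracks the paper's own proof almost line for line through the first three assertions: the inclusion \eqref{Eqn:CharOmega1D} is read off from \eqref{Eqn:CommRieffelTrSp}; your componentwise pairing $\sum_j b_j^* b_j'$ is exactly the paper's scalar product, which it writes on elementary tensors as $\langle b\otimes\clgen_j, b'\otimes\clgen_k\rangle=\delta_{jk}b^*b'=(\id_B\otimes\Tr)\bigl((b\otimes\clgen_j)^*(b'\otimes\clgen_k)\bigr)$; the components are extracted, as you do, from uniqueness of coordinates with respect to the free family $(\clgen_j)$ in $\bo(S)$; and both Leibniz computations are identical. Two quibbles on this part. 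First, your constant $\sqrt n$ in the norm comparison is false: already for scalars $b_1=1$, $b_2=i$ one computes $\omega^*\omega=2-2i\otimes\clgen_1\clgen_2$, whose norm is $4$, so $\|\omega\|=2>\sqrt2$; the correct statement is $\max_j\|b_j\|\le\|\omega\|\le n\max_j\|b_j\|$, which is all you need. Second, in the converse your parenthetical check that $\nabla(\xi)$ lands in $\modHilbL\otimes_B\Omega^1_D$ rather than merely in $E\otimes_\C\langle\clgen_1,\dots,\clgen_n\rangle$ does not follow from ``$\delta_D$ has values in $\Omega^1_D$'': the Leibniz \emph{increments} lie in the right space, but the values $\sum_j\nabla_j(\xi)\otimes\clgen_j$ need not when the inclusion \eqref{Eqn:CharOmega1D} is strict (e.g.\ if the $\partial_j$ are linearly dependent, so that $\Omega^1_D$ is a proper submodule of $B^n$). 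The paper silently ignores this point as well; it is harmless when $\Omega^1_D=\pi(B)\otimes_\C\langle\clgen_1,\dots,\clgen_n\rangle$, which holds in the examples.

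On closability you are in fact more careful than the paper, whose entire argument is the sentence that closability is ``a straightforward consequence of the decomposition.'' Your ``if'' direction is correct and is presumably what that sentence means: since $\bigl\langle\sum_j\eta_j\otimes\clgen_j,\sum_j\eta_j'\otimes\clgen_j\bigr\rangle=\sum_j\langle\eta_j,\eta_j'\rangle$, convergence of $\nabla(\xi_k)$ is equivalent to simultaneous convergence of all components, and closability of each $\nabla_j$ kills the limit. But the converse you flag is a genuine gap, and the ``coupling'' you hope will close it cannot exist at this level of generality: closability of a tuple of operators does not imply closability of a single coordinate. Concretely, on a Hilbert space let $S$ be closed, unbounded, with domain $\mathcal{D}$, fix $u\notin\dom(S^*)$ and a unit vector $e$, and set $T\xi:=\langle S\xi,u\rangle e$. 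Then the pair $(T,S)$ is closable, because $\xi_k\to0$ together with convergence of $S\xi_k$ forces $S\xi_k\to0$ by closedness of $S$, whence $T\xi_k\to0$; yet $T$ alone is built from an unbounded linear functional and is therefore non-closable (take $\eta$ with $\langle S\eta,u\rangle=1$ and approximate it from the dense kernel of the functional). So any correct proof of ``$\nabla$ closable $\Rightarrow$ all $\nabla_j$ closable'' would have to use the Leibniz structure over $\Bb$ in an essential way, and neither you nor the paper supplies such an argument. Fortunately this direction is never used in the sequel: everywhere closability is actually needed (Propositions \ref{Prop:ExtConn} and \ref{Prop:Nabla}) it is obtained via Proposition \ref{Prop:FrameClosed} or componentwise, i.e.\ via the direction you did prove, so the safe repair is to weaken the last sentence of the lemma to that implication.
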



\begin{proof}
The proof of this lemma is essentially algebraic. The inclusion \eqref{Eqn:CharOmega1D} is a direct consequence of Equation \eqref{Eqn:CommRieffelTrSp}. We can then define a $B$-valued scalar product on $\Omega^1_D$ by setting:
$$
\langle b \otimes \clgen_j, b' \otimes \clgen_k \rangle  = \delta_{j,k} b^* b' = (\id_B \otimes \Tr)\big( (b \otimes \clgen_j)^* (b' \otimes \clgen_k) \big)
$$ 
where $\Tr$ is the unique normalised trace on the finite dimensional matrix algebra $B(S)$. Hence in this particular case, the Banach bimodule $\Bmod$ actually fits within case (i) of Definition \ref{Def:Bmod}.

An immediate consequence of \eqref{Eqn:CharOmega1D} is the inclusion:
$$ 
E \otimes _B \Omega^1_D = \Span \big\{ \xi \otimes [D, b] \big| \xi \in E, b \in B_1 \big\} \subseteq E \otimes_\C B(S).
$$
For any given $\xi$, there are $n$ unique elements $\xi_j \in \modHilb$ such that
$$ \nabla (\xi) = \sum_{} \xi_j \otimes \clgen_j . $$
Moreover, since the family $\clgen_j$ is free in $B(S)$, these $\xi_i$ are uniquely defined. It then suffices to define the maps $\nabla _j$ by $\xi \mapsto \xi_j$ and Equation \eqref{Eqn:Connexion} reads:
$$ \sum \nabla _j(\xi b) \otimes \clgen_j = \sum \nabla _j(\xi) b \otimes \clgen_j + \xi \sum \partial _j(b) \otimes \clgen_j .$$
Since $\clgen_j$ is free in $B(S)$, it is equivalent to say that for all $j$, 
$$ \nabla _j(\xi b) = \nabla _j(\xi) b + \xi \otimes \partial _j(b) $$
where the equalities take place in $\modHilbL$.

\medbreak

Now, assume that the family $\nabla _j$ satisfies Equation \eqref{Eqn:DerivNabla}, let us show that \eqref{Eqn:RecompConnexion} defines a connexion associated to $D$:
$$
\nabla (\xi b) = \sum \nabla _j(\xi b) \otimes \clgen_j = \sum (\nabla _j(\xi) b + \xi \partial _j(b)) \otimes \clgen_j = \nabla (\xi) b + \xi [D, b].
$$
Finally, the result on closability is a straightforward consequence of the decomposition of $\nabla $ into components.
\end{proof}

\begin{Rem}
\label{Rem:ProdScalNabla}
Using Lemma \ref{Lem:DecompNabla}, we can give a meaning to the scalar products
\begin{align*}
\langle \xi, \nabla \eta \rangle &:= \sum \langle \xi,  \nabla _j(\eta) \rangle \otimes \clgen_j
&
\langle \nabla \xi, \eta \rangle &:= -\sum \langle \nabla _j(\xi), \eta \rangle \otimes \clgen_j.
\end{align*}
It is readily checked that $\langle \nabla \xi, \eta \rangle ^* = \langle \eta, \nabla \xi \rangle $ -- and this accounts for the minus sign in $\langle \nabla \xi, \eta \rangle $.\\
Comparing the above definition with the pairing appearing in \cite{SpecFlowKL} (4.2), we see that both expressions are compatible. 
\end{Rem}
We proceed with \emph{Hermitian connexions}, following \cite{SpecFlowKL}, Definition 4.3 
and \cite{NCG} VI.1 p.553: 
\begin{Def}
\label{Def:HermConnexion}
Consider a connexion $\nabla$ on $\modHilb$, associated to $\delta_D:=[D,\;\cdot\;]$ as in Lemma \ref{Lem:DecompNabla}, which is defined on $\modHilbL$ such that the $B$-valued scalar product on $\modHilb $ restricts to $\modHilbL$ in a compatible way, \ie $\langle \modHilbL, \modHilbL \rangle \subseteq B_1$. Then $\nabla $ is \emph{Hermitian} if for all $\xi, \eta \in \modHilbL$
\begin{equation}
\label{Eqn:CompScalProd2}
[ D, \langle \xi, \eta \rangle ] = \langle \xi, \nabla (\eta) \rangle - \langle \nabla (\xi), \eta \rangle.
\end{equation}
\end{Def}

\begin{Rem}
\label{Rem:StabE1}
It is easily seen, by decomposing with respect to the free family $(\clgen_j)$ of $B(S)$ that \eqref{Eqn:CompScalProd2} is equivalent to for all $\xi, \eta \in \modHilbL$ and $j$,
\begin{equation}
\label{Eqn:CompScalProdj}
\langle \xi, \nabla _j(\eta) \rangle  + \langle \nabla _j(\xi), \eta \rangle = \partial _j \big \langle \xi, \eta \rangle.
\end{equation}
Moreover, if $\nabla $ is Hermitian on $\modHilbL$, then $\langle \modHilb_1, \modHilb_1 \rangle \subseteq B_1$ and \eqref{Eqn:CompScalProd2} is satisfied on $\modHilb_1$. 
\end{Rem}

\begin{Prop}
\label{Prop:OpStarMod}
If 
\begin{itemize}
\item
$\nabla $ is a closable connexion on a f.g. projective $C^*$-correspondence $E$, 
\item
$\dom(\nabla )$ is dense in $E$ and for $E_1 := \overline{\dom(\nabla )}$, $\langle E_1, E_1 \rangle \subseteq B_1$,
\item
$\nabla $ is associated to the derivation $\delta_D$ induced by a Rieffel spectral triple,
\item
$\nabla $ is Hermitian,
\end{itemize}
then $E_1$ is an operator $*$-module (for the standard involution $\dag =*$ -- see Definition 3.2 p.12 of \cite{SpecFlowKL}).
\end{Prop}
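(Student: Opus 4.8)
The plan is to realize $E_1$ as an off-diagonal corner of an operator $*$-algebra and to read off the module structure from it; the only genuinely new ingredient, beyond what Corollary~\ref{Cor:FrameE1} already provides, is that the Hermitian property forces the standard involution $*$ to be compatible. Following the proof of Corollary~\ref{Cor:FrameE1}(ii), I would set $F := B \oplus E$ and observe that $\nabla' := \delta_D \oplus \nabla$ is a closable $\delta_D$-connexion on the finitely generated projective correspondence $F$, with $\langle F_1, F_1\rangle \subseteq B_1$ inherited from $\langle E_1, E_1\rangle \subseteq B_1$. Applying Proposition~\ref{Prop:Connexion} and Proposition~\ref{Prop:DefB1} to $\nabla'$ produces a densely defined closable derivation $\partial'$ on $\bo(F)$ together with the operator algebra $\bo(F)_1$, inside which $E_1 = \bo_B(B \to E)_1$ appears as an off-diagonal corner exactly as in Definition~\ref{Def:LinkingAlg}. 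The operator space structure on $E_1$ is then the one induced from $\bo(F)_1$.

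Next I would verify the three items of Definition~\ref{Def:OpEtMod} using Corollary~\ref{Cor:FrameE1}. The completely bounded inclusion $E_1 \inj E$ and the complete boundedness of $\xi \mapsto \nabla(\xi)$ are precisely Corollary~\ref{Cor:FrameE1}(ii); restricting and corestricting the completely bounded multiplication of $\bo(F)_1$ yields both the completely bounded right $B_1$-action and the completely bounded pairing $\langle\cdot,\cdot\rangle \colon E_1 \times E_1 \to B_1$, which lands in $B_1$ by hypothesis and satisfies the Hilbert-module axioms since these already hold in $E$. For the self-adjoint idempotent, I would take the frame $(V\zeta_j)$ of elements of $E_1$ produced in Corollary~\ref{Cor:FrameE1}(i) and form $P = (P_{ij})$ with $P_{ij} := \langle V\zeta_i, V\zeta_j\rangle \in B_1$; the frame relation shows $P = P^2 = P^*$ on $B_1^\infty$ and that $P B_1^\infty$ is isomorphic to $E_1$ as an operator space.

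The heart of the matter is the compatibility of the involution $\dagger = *$, and this is exactly what the Hermitian hypothesis delivers. Since $(\pi,\Hh,D)$ is a Rieffel spectral triple, $\clgen_j^* = -\clgen_j$ and $\partial_j^* = -\partial_j$, so a short computation gives $\delta_D(b^*) = U\,\delta_D(b)^*\,U$ with $U$ the grading on the Clifford module $S$ induced by the automorphism $h$ of $\C l(n)$: indeed $U$ commutes with $\pi(B)$ and anticommutes with each $\clgen_j$, which cancels the two sign changes. By Proposition~\ref{Prop:DefB1}(3) this makes $B_1$ an operator $*$-algebra with $\dagger = *$. I would then show that the Hermitian identity \eqref{Eqn:CompScalProd2}, equivalently its componentwise form \eqref{Eqn:CompScalProdj} from Remark~\ref{Rem:StabE1}, is precisely the statement that the derivation $\partial'$ on $\bo(F)$ satisfies the same twisted relation $\partial'(T^*) = U\,\partial'(T)^*\,U$, so that $\bo(F)_1$ too is an operator $*$-algebra. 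The standard involution of $\bo(F)_1$ then interchanges the corners $\bo_B(B\to E)_1$ and $\bo_B(E\to B)_1$ and recovers $\langle\xi,\eta\rangle^\dagger = \langle\eta,\xi\rangle$ on $E_1$, which is the remaining content of $E_1$ being an operator $*$-module.

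The main obstacle is the sign bookkeeping in this last step: one must check carefully that \eqref{Eqn:CompScalProdj}, together with $\clgen_j^* = -\clgen_j$, $\partial_j^* = -\partial_j$, and the minus-sign convention for $\langle\nabla\xi,\eta\rangle$ introduced in Remark~\ref{Rem:ProdScalNabla}, conspires to give exactly the twisted relation above with the single grading unitary $U$, rather than leaving an obstruction term. Decomposing $\nabla = \sum_j \nabla_j \otimes \clgen_j$ along the free family $(\clgen_j)$ via Lemma~\ref{Lem:DecompNabla} reduces the verification to the $n$ scalar identities \eqref{Eqn:CompScalProdj}, which is the cleanest way to keep the signs under control.
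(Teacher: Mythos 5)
Your skeleton coincides with the paper's up to the last step: the paper also works with $F = B \oplus E$ and $\nabla' = \delta_D \oplus \nabla$, obtains the operator space structure, the completely bounded inclusion and the cb map $\xi \mapsto \nabla(\xi)$ from Corollary~\ref{Cor:FrameE1}, and builds the selfadjoint idempotent $P = \big(\langle \zeta_j,\zeta_k\rangle\big)$ from the frame, with explicit maps $\Phi,\Psi$ realizing $E_1 \cong P B_1^m$. The genuine gap is your treatment of the involution via Proposition~\ref{Prop:DefB1}(3): the unitary $U$ you invoke need not exist. Definition~\ref{Def:Clifford} grades the \emph{algebra} $\C l(n)$ by the automorphism $h$, but for $n$ odd $h$ is outer and is in general not unitarily implemented on the Clifford module $S$. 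Concretely, take the canonical odd triple on the circle: $n=1$, $S=\C$, $\clgen_1 = i$, $B = C(S^1)$ acting on $\Hh_0 = L^2(S^1)$, $\delta_D(b) = i\,\partial_1(b)$. Then $\delta_D(b^*) = -\delta_D(b)^*$, so a unitary $U$ commuting with $\pi(B)$ and satisfying $U\delta_D(b)^*U = \delta_D(b^*)$ would have to anticommute with every multiplication operator $\partial_1(b)$; but the commutant of $C(S^1)$ on $L^2(S^1)$ is $L^\infty(S^1)$, which \emph{commutes} with all such operators, forcing $\partial_1(b)=0$ for all $b$ --- a contradiction. Since Proposition~\ref{Prop:OpStarMod} must cover odd Rieffel triples (Theorem~\ref{Thm:Main} uses one in its first case), your route breaks exactly where it is needed. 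There are secondary problems as well: Proposition~\ref{Prop:DefB1}(3) requires $F_1 = F_2$, whereas $\partial'$ takes values in $\bo(F \to F \otimes \Omega^1_D)$, so the expression $U\partial'(T)^*U$ does not even parse until everything is represented on a single space such as $\Hh' = F \otimes_B \Hh$; and the $*$-invariance of $\dom(\partial')$ as given by \eqref{Eqn:DomPartial2} is not automatic for general $T \in \bo(F)_1$.

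The paper avoids all of this by never upgrading $\bo(F)_1$ to an operator $*$-algebra. Instead, it proves the cb bound on the pairing directly: using the Hermitian identity in its componentwise form \eqref{Eqn:CompScalProdj}, one gets
\begin{equation*}
\pi(\xi)^*\,\nabla\eta - (\nabla\xi)^*\,\pi(\eta) = \pi_0\big([D,\langle\xi,\eta\rangle]\big),
\end{equation*}
where the signs from $\clgen_j^* = -\clgen_j$ cancel against the minus-sign convention of Remark~\ref{Rem:ProdScalNabla} --- this much agrees with your instinct. But the paper then harvests this cancellation by a matrix factorization: the $2\times 2$ matrix computing $\|\langle\xi,\eta\rangle\|_1$ factors as the matrix of $\eta$ multiplied by
\begin{equation*}
\begin{pmatrix} \pi(\xi)^* & 0 \\ -\nabla(\xi)^* & \pi(\xi)^* \end{pmatrix}
= \left( \begin{pmatrix} 0 & -1 \\ 1 & 0 \end{pmatrix}
\begin{pmatrix} \pi(\xi) & 0 \\ \nabla(\xi) & \pi(\xi) \end{pmatrix}
\begin{pmatrix} 0 & 1 \\ -1 & 0 \end{pmatrix} \right)^{\!*},
\end{equation*}
so that conjugation by a \emph{constant} rotation unitary --- rather than by an inner implementation of $h$ on $S$ --- yields $\|\langle\xi,\eta\rangle\|_1 \leqslant \|\xi\|_1\,\|\eta\|_1$ together with its matrix amplifications, uniformly in the parity of the triple. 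If you insist on your structural route, it can be repaired by replacing $S$ with $S \oplus S^h$ (on which $h$ is implemented by the flip, and which induces the same norm $\|\cdot\|_1$ since rescaling the $\clgen_j$ by $-1$ is isometric), but at that point you would essentially be re-deriving the paper's factorization.
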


\begin{proof}
All hypotheses are satisfied to apply Cor. \ref{Cor:FrameE1} to $\nabla $ and $E$. This existence of a frame suffices to obtain the isomorphism with $P A^\infty $ as required in \cite{SpecFlowKL} p.13. Indeed, consider the finite matrix $P := \Big( \langle \zeta_j , \zeta_k \rangle \Big)_{jk}$. This is a selfadjoint operator:
\begin{multline*}
 \langle P a_i, b_i \rangle = \sum_{i=1}^m \sum_{j=1}^m (P_{ij} a_j)^* b_i = \sum_{i,j =1}^m \langle \zeta_i, \zeta_j a_j\rangle^* b_i \\
= \sum_{i,j =1}^m \langle  \zeta_j a_j, \zeta_i\rangle b_i = \sum_{i,j=1}^m a_j^* \langle \zeta_j, \zeta_i \rangle b_i = \langle a_j, P b_j \rangle .
\end{multline*}
$P$ is also an idempotent:
$$
\sum_{k=1}^m \langle \zeta_j , \zeta_k \rangle  \langle \zeta_k , \zeta_l \rangle 
=
\sum_{k=1}^m \Big\langle \zeta_j , \zeta_k   \langle \zeta_k , \zeta_l \rangle \Big\rangle = \langle \zeta_j , \zeta_l \rangle 
$$
since $(\zeta_j)$ is a frame. $E_1$ is isomorphic to $P A_1^m$, in the sense that there are maps $\Phi \colon E_1 \to P A_1^m$ and $\Psi \colon P A_1^m \to E_1$ defined by:
\begin{align*}
\Phi(\xi) &= \big( \langle \zeta_j, \xi \rangle \big)_j
&
\Psi\big( a_j \big) &= \sum \zeta_j a_j
\end{align*}
such that $\Phi \circ \Psi = \id_{P A_1^m}$ and $\Psi \circ \Phi = \id_{E_1}$. 

Finally, the scalar product induced on $P A_1^m$ coincides with the one induced by the isomorphism with $E_1$:
$$ 
\langle \Psi(a_j), \Psi(b_j) \rangle = \left\langle \sum \zeta_j a_j, \sum \zeta_k b_k \right \rangle = \sum a_j^* \langle \zeta_j, \zeta_k \rangle  b_k = \sum a_j^* P_{j k} b_k = \sum a_j^* b_j
$$
since $\sum_{k} P_{jk} b_k = b_j$.

\bigbreak

To prove the complete boundedness property of the pairing, we appeal to the Hilbert space $\Hh$. We set $E' := B \oplus E$ and equip it with the closed connexion $\delta_D \oplus \nabla $. We can apply Proposition \ref{Prop:Connexion} to $\bo(E') =: B'$, thereby getting a derivation $\partial' $ on $B'$ with values in $\bo(E' \to E' \otimes \Omega^1_D)$. 

Introduce $\Hh' := E' \otimes _B \Hh = \Hh \oplus E \otimes \Hh$, whose decomposed tensors we write $x \oplus \eta \otimes y$. We can realise $\bo(E' \to E' \otimes \Omega^1_D)$ inside $B(\Hh')$. Indeed, given $T \in \bo(E' \to E' \otimes \Omega^1_D)$ we let it act on $\Hh'$ by:
$$ T( x \oplus \eta \otimes y) = T(\eta) y $$
where $T(\eta) \in E' \otimes \Omega^1_D$ acts on $y \in \Hh$ \textsl{via} the inclusion $\Omega^1_D \subseteq \bo(\Hh)$. This enables us to apply Proposition \ref{Prop:DefB1} with $\bo(\Hh' \oplus \Hh')$ as underlying algebra.

\medbreak

To summarise, $B$ and $E$ sit inside $B'$ and are represented on $B(\Hh')$ \textsl{via} the representation $\pi$ of the spectral triple, which we extend by $\pi(\xi)(x \oplus \eta \otimes y) = \xi \otimes x$ and
$$
\pi(b)(x \oplus \eta \otimes y) = \pi_0(b)x \oplus \pi_1(b) (\eta \otimes y) = b x \oplus (b \eta) \otimes y.
$$
The two representations extend naturally to $\Omega^1_D$ and $E \otimes _B \Omega^1_D$: it suffices to let $\id_{\Hh_0} \otimes \clgen_j$ act on $\Hh = \Hh_0 \otimes S$.  This gives a meaning to the action of $\nabla (\xi) = \sum_{} \nabla _j(\xi) \otimes \clgen_j$ on $\Hh'$. Since we are using faithful representations of $C^*$-algebras, the norms on $E_1$ and $B_1$ coincide with those obtained from Corollary \ref{Cor:FrameE1}.

%
%
%
%
%
%
%
%
%

The expressions $\pi(\xi)$, $\partial'(\pi(\xi)) = \nabla (\pi(\xi))$ (abbreviated to $\nabla (\xi)$) and $\nabla (\xi)^*$ all have a meaning in $B(\Hh')$. It is readily checked that $ \pi(\xi)^* \pi(\eta) = \pi_0(\langle \xi, \eta \rangle )$ and we can then write:
\begin{multline*}
\pi(\xi)^* \nabla \eta - (\nabla \xi)^* \pi(\eta) = \pi(\xi)^* \left( \sum \pi(\partial _j(\eta)) \otimes \clgen_j \right) - \sum_{} \pi(\partial _j(\xi) \otimes \clgen_j)^* \pi(\eta) \\
= \sum_{} \pi_0 \big( \langle \xi, \partial _j \eta \rangle \otimes \clgen_j \big) + \sum_{} \pi_0 \big( \langle \partial _j(\xi), \eta \rangle \otimes \clgen_j \big) = \pi_0 \big( [ D, \langle \xi, \eta \rangle ] \big).
\end{multline*}
This leads to 
\begin{multline*}
\begin{pmatrix}
\pi_0 \big(\langle \xi, \eta \rangle \big) & 0 \\
\pi_0 \Big(\delta \big( \langle \xi, \eta \rangle \big) \Big) & \pi_0 \big( \langle \xi, \eta \rangle \big)
\end{pmatrix}
=
\begin{pmatrix}
\pi(\xi)^* \pi(\eta) & 0 \\
\pi(\xi)^* \nabla \eta - (\nabla \xi)^* \pi(\eta)  & \pi(\xi)^* \pi(\eta)
\end{pmatrix}
\\
=
\begin{pmatrix}
\pi(\xi)^* & 0 \\ -\nabla (\xi)^* & \pi(\xi)^*
\end{pmatrix}
\begin{pmatrix}
\pi(\eta) & 0 \\ \nabla (\eta) & \pi(\eta)
\end{pmatrix}
\end{multline*}
The leftmost matrix is (isometric to) the one we used to define $B_1$ in Proposition \ref{Prop:DefB1}. To make contact with the norm on $E_1$ we remark that:
$$ 
\begin{pmatrix}
\xi^* & 0 \\ -\nabla (\xi)^* & \xi^*
\end{pmatrix}
=
\begin{pmatrix}
\xi & -\nabla (\xi) \\ 0 & \xi
\end{pmatrix}
^*
= 
\left(
\begin{pmatrix}
0 & -1 \\ 1 & 0
\end{pmatrix}
\begin{pmatrix}
\xi & 0 \\ \nabla (\xi) & \xi
\end{pmatrix}
\begin{pmatrix}
0 & 1\\ -1 & 0
\end{pmatrix}
\right)^*
$$
which proves that $\| \langle \xi, \eta \rangle \|_{1} \leqslant \| \xi \|_{1} \| \eta \|_{1}$. 
\end{proof}

We conclude this section with the following proposition, which by exception doesn't require finiteness of the original $C^*$-correspondence $E$. It is a sort of converse to  Corollary \ref{Cor:FrameE1}. It requires the following
\begin{Def}
\label{Def:FrameInf}
Let $E$ be a $C^*$-correspondence which is projective. A (possibly infinite) sequence $(\xi_j)$ is a ``frame'' for $E$ if 
$$ T_N := \sum_{k =1}^N \xi_j \langle \xi_j, \cdot \rangle  $$
satisfies $T_N \xi \to \xi$ for any $\xi \in E$ (convergence in $E$).
\end{Def}
Examples of such frames will appear naturally in connexion with generalized crossed products by finitely generated projective modules below (see Proposition \ref{Prop:ExtConn}).

\begin{Prop}
\label{Prop:FrameClosed}
Let 
\begin{itemize}
\item
$\partial $ be a derivation on $B$ associated to a Rieffel spectral triple,
\item
and $\nabla $ be an associated Hermitian connexion defined on the dense and $B_1$-stable subset $\modHilbL \subseteq E$.
\end{itemize}
If $(\xi_j) \in \modHilbL$ is a ``frame'' for $E$ then the connexion $\nabla $ is closable.
\end{Prop}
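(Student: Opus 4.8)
The plan is to prove closability by testing the putative limit against the frame and reducing everything to the already-established closability of the horizontal derivation. By Lemma \ref{Lem:DecompNabla} it suffices to show that each component $\nabla_j$ is closable, so I fix an index $j$ and a sequence $\eta_n \in \modHilbL$ with $\eta_n \to 0$ in $E$ and $\nabla_j(\eta_n) \to \zeta_j$ in $E$; the goal is $\zeta_j = 0$.

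First I would compute $\langle \xi_k, \zeta_j \rangle$ for each fixed frame vector $\xi_k \in \modHilbL$. The Hermitian identity \eqref{Eqn:CompScalProdj} applied to the pair $(\xi_k, \eta_n)$ reads
\[
\langle \xi_k, \nabla_j(\eta_n) \rangle = \partial_j \langle \xi_k, \eta_n \rangle - \langle \nabla_j(\xi_k), \eta_n \rangle,
\]
and this makes sense because $\xi_k, \eta_n \in \modHilbL$ forces $\langle \xi_k, \eta_n \rangle \in B_1 \subseteq \dom(\partial_j)$ by the compatibility $\langle \modHilbL, \modHilbL \rangle \subseteq B_1$ recalled in Remark \ref{Rem:StabE1}. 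Letting $n \to \infty$, the left-hand side converges to $\langle \xi_k, \zeta_j \rangle$ by continuity of the pairing against the fixed vector $\xi_k$, and the last term converges to $0$ since $\eta_n \to 0$ while $\nabla_j(\xi_k)$ is fixed. Hence the sequence $b_n := \langle \xi_k, \eta_n \rangle$ satisfies $b_n \to 0$ in $B$, while at the same time $\partial_j(b_n) = \langle \xi_k, \nabla_j(\eta_n) \rangle + \langle \nabla_j(\xi_k), \eta_n \rangle$ converges, its limit being $\langle \xi_k, \zeta_j \rangle$.

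Now I would invoke closability of $\partial_j$: each $\partial_j$ is the $*$-derivation attached to the skew-adjoint — hence closed — operator $\partial_j$ of the Rieffel spectral triple, so it is closable by exactly the argument of Lemma \ref{Lem:CloseDeriv}. Applying this to $b_n \to 0$ with $\partial_j(b_n)$ convergent forces $\langle \xi_k, \zeta_j \rangle = 0$. Since this holds for every frame vector $\xi_k$, the frame property of Definition \ref{Def:FrameInf} gives $\zeta_j = \lim_N \sum_{k=1}^N \xi_k \langle \xi_k, \zeta_j \rangle = 0$. Thus every component $\nabla_j$ is closable, and Lemma \ref{Lem:DecompNabla} yields the closability of $\nabla$.

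The only real subtlety — and the reason the hypotheses are arranged as they are — is that the unboundedness of $\partial_j$ forbids concluding $\partial_j \langle \xi_k, \eta_n \rangle \to 0$ directly from $\langle \xi_k, \eta_n \rangle \to 0$; the Hermitian identity is precisely the device that instead exhibits $\partial_j \langle \xi_k, \eta_n \rangle$ as a \emph{convergent} sequence, after which closability of $\partial_j$ supplies the vanishing of the limit. One should also note that having only a (possibly infinite) frame in the sense of Definition \ref{Def:FrameInf} is enough, since it is used solely through the reconstruction $\zeta_j = \lim_N T_N \zeta_j$ of the single element $\zeta_j$, and never through any uniform bound.
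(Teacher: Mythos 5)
Your proof is correct and follows essentially the same route as the paper's: you test the putative limit against the frame vectors, use the Hermitian identity \eqref{Eqn:CompScalProdj} to exhibit $\partial_j \langle \xi_k, \eta_n \rangle$ as a \emph{convergent} sequence, invoke closedness of the base derivation as in Lemma \ref{Lem:CloseDeriv}, and conclude by the frame reconstruction of Definition \ref{Def:FrameInf}. The only cosmetic difference is that you first reduce to the components $\nabla_j$ via Lemma \ref{Lem:DecompNabla}, whereas the paper runs the identical argument with the full connexion using the aggregated pairing of Remark \ref{Rem:ProdScalNabla}.
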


\begin{Rem}
This is in particular true for the Grassmannian connexion (see \cite{SpecFlowKL}, Section 4.1). 
\end{Rem}

\begin{proof}
We start from a sequence $\zeta_n$ of elements in $\dom(\nabla )$ such that $\zeta_n \to 0$ and $\nabla (\zeta_n) \to \eta$. We want to prove that $\eta = 0$.

From Definition \ref{Def:FrameInf}, we see that it suffices that $\langle \xi_j, \eta \rangle = 0$ for all $j$ to obtain $\eta = 0$. We estimate $\langle \xi_j, \eta \rangle$:
$$
 \langle \xi_j, \nabla (\zeta_n) \rangle = \partial ( \langle \xi_j, \zeta_n \rangle ) + \langle \nabla (\xi_j), \zeta_n \rangle
$$
Since $\nabla (\xi_j)$ is a fixed quantity and $\zeta_n \to 0$, we have $\langle \nabla (\xi_j), \zeta_n \rangle \to 0$. Moreover, since both $\xi_j$ and $\zeta_n$ are elements of $\dom(\nabla )$, $b_n = \langle \xi_j, \zeta_n \rangle \in \dom(\partial )$ and this sequence tends to $0$ in norm. Our previous computation ensures that $\partial b_n \to \langle \xi_j, \eta \rangle $. Since $\partial $ is closed, we get $\eta = 0$.
\end{proof}

\section{Generalised crossed products: a review}\label{Sec:crossed prods}

Our aim in this paper is to construct a spectral triple for Generalised Crossed Products (GCP), we therefore introduce this notion.

\smallbreak

\begin{Def}
\label{Def:HilbBimod}
A \emph{Hilbert bimodule} $E$ over a $C^*$-algebra $A$ is a bimodule $E$ over $A$ such that
\begin{itemize}
\item
$E_A$ (right $A$-module) is a right-Hilbert module over $A$ for the scalar product $\langle \cdot , \cdot \rangle _A$;
\item
${}_A E$ is a left-Hilbert module over $A$ for $ {}_A \langle \cdot , \cdot \rangle$;
\item
for all $\xi, \eta, \zeta \in E$,
$$ {}_A \langle \xi, \eta \rangle \zeta = \xi \langle \eta, \zeta \rangle _A .$$
\end{itemize}
\end{Def}

\begin{Rem}
%
%
Several notions of ``Hilbert bimodule'' can be found in the litterature. Our convention is different from the one used in \cite{PiCrG}, for instance. 
\end{Rem}

%

To illustrate this notion, consider the following simple example: let $X$ be a  compact space, and denote by $B := C(X)$ the $C^*$-algebra of complex continuous functions on $X$. We further assume that we have a (locally trivial) line bundle $\Ll \to X$. According to Serre--Swan's Theorem (see for instance \cite{EltNCG} Theorem 2.10 p.59), the $B$-module $E$ of sections of $\Ll$ is a f.g. projective bundle over $B$. 

If moreover, $E$ is equipped with a fibrewise scalar product $( \cdot | \cdot ) \colon E_x \times E_x \to \C$ (\emph{Hermitian line bundle}), then we can define a \emph{Hilbert module} structure on $E$ (see Definition II.7.1.1 p.137 
of \cite{Black}). Adding an automorphism of $X$, we can define a Hilbert \emph{bimodule} structure on $E$:
\begin{Ex}
\label{Ex:BiMod}
Consider $B = C(X)$, a commutative unital $C^*$-algebra and $\sigma$, an automorphism of $B$. We denote by $\tilde{\sigma} \colon X \to X$, the homeomorphism of $X$ such that $\sigma(b)(x) = b(\tilde{\sigma}(x))$. A Hilbert module $E$ over $B$ yields a Hilbert bimodule structure by setting, for all $b \in B, \xi \in E$:
\begin{align*}
b \cdot \xi &:= \xi \, \sigma^{-1}(b) 
&
{}_B \langle \xi, \eta \rangle &:= \sigma \big( \langle \eta, \xi \rangle _B \big).
\end{align*}
\end{Ex}
It is clear that Hilbert bimodules are in particular $C^*$-correspondences. Moreover, it is easily checked that given two Hilbert bimodules, the tensor product $E \otimes _B F$ (see Definition \ref{Def:TensorProduct} above) is also a Hilbert bimodule. The left-scalar product is defined by:
$$
{}_B \langle \xi \otimes \eta, \xi' \otimes \eta' \rangle := {}_B \langle \xi \cdot {}_B \langle \eta, \eta' \rangle, \xi' \rangle .
$$
Iterating the construction, we obtain $B$-Hilbert bimodules $ E^n := E \otimes _B E \otimes _B \cdots \otimes_B E$ for $n \in \N$, where the tensor product contains $n$-factors.

\medbreak

In the particular case described in Example \ref{Ex:BiMod}, the left- and right-products are linked by the relation: for $\xi_1, \ldots , \xi_n \in E$ and $b \in B$,
\begin{multline}
\label{Eqn:CommN}
b \cdot (\xi_1 \otimes \xi_2 \otimes \cdots \otimes \xi_n) = (\xi_1 \cdot \sigma^{-1}(b)) \otimes \xi_2 \otimes \cdots \otimes \xi_n \\
= \xi_1 \otimes (\xi_2 \cdot \sigma^{-2}(b)) \otimes \cdots \otimes \xi_n =  \xi_1 \otimes \xi_2  \otimes \cdots \otimes (\xi_n\cdot \sigma^{-n}(b)).
\end{multline}
Let $A$ be a $C^*$-algebra equipped with a pointwise continuous $S^1$-action $\gamma$. We call such $\gamma$ a \emph{gauge action} and it yields \emph{spectral subspaces} $A^{(k)}$, $k \in \Z$, defined by:
\begin{equation}
\label{Eqn:SpecSubspace}
A^{(k)} := \{T \in A | \forall z \in S^1 \subseteq \C, \gamma_z(T) = z^k T \} .
\end{equation}
$T \in A^{(k)}$ is called a \emph{gauge-homogeneous} element of \emph{gauge} $k$. It is readily checked that $A^{(0)}$ is a $C^*$-algebra and that given any $k \in \Z$, $A^{(k)}$ has a natural structure of Hilbert bimodule over $A^{(0)}$.

\smallbreak

We now define GCP, which are our main topic of interest. The following appeared as Definition 2.1 in \cite{AbadieEE}:
\begin{Def}
\label{Def:RepHilbBimod}
Given a Hilbert bimodule $E$ over a $C^*$-algebra $B$, a \emph{representation of $E$} on a $C^*$-algebra $C$ is a $C^*$-algebra morphism $\pi$ together with a linear map $S \colon E \to C$ such that for all $\xi, \zeta \in E$,
\begin{align*}
(i)& \hspace{0.25cm} S(\xi)^* S(\zeta) = \pi \left(\langle \xi , \zeta \rangle_B \right) 
&
(ii)& \hspace{0.25cm} S(\xi) \pi(b) = S(\xi b) \\
(iii)& \hspace{0.25cm}\pi(b) S(\xi) = S(b \xi)
&
(iv)&\hspace{0.25cm} S(\xi) S(\zeta)^* = \pi \left({}_B \langle \xi , \zeta \rangle \right).
\end{align*}
\end{Def}

\begin{Def}
\label{Def:PiCrG}
Let ${}_B E_B$ be a $B$-Hilbert bimodule. The \emph{Generalized Crossed Product} (GCP)  $\PiCrG $ of $B$ by $E$ is the universal $C^*$-algebra generated by the representations of $E$, in the sense that (see \cite{Black}, II.8.3):
\begin{itemize}
\item
there is a representation $(\pi_E, S_E)$ of $E$ on $\PiCrG $;
\item 
$\PiCrG $ is generated by $\pi_E(B) \cup S_E(E)$;
\item 
for any representation $(\pi, S)$ of $E$ on $C$, there is a $C^*$-morphism $\rho_{(\pi, \Tt)} \colon \PiCrG  \to C$ such that the following diagram commutes:
$$
\xymatrix@C=1.2cm{
B \ar[dr]_{\pi_E} \ar@/^{6mm}/[drr]^{\pi} & & \\
& \PiCrG  \ar[r]^-{\rho_{(\pi,S)}}& C\\
E \ar[ru]^{S_E} \ar@/_{6mm}/[rru]_{S}& &
}
$$
\end{itemize}
\end{Def}
We say that $F \in \PiCrG $ is \emph{algebraic} if it is in the involutive algebra generated by $\pi_E(A)$ and $S_E(E)$ -- without taking the closure. By definition, algebraic elements are dense in the GCP.
\begin{Ex}
\label{Ex:GCP-X}
Given a compact space $X$, a Hermitian line bundle $\Ll$ over $X$ and an automorphism $\sigma$ of $B =C(X)$, we call \emph{GCP associated to $(X, \Ll, \sigma)$} the GCP generated by $B$ and the Hilbert bimodule $E$ constructed from the sections of $\Ll$, as described in Example \ref{Ex:BiMod}.
\end{Ex}
The notion of GCP is closely related to that of (Cuntz-)Pimsner algebra \cite{PiCrG}. An important characterization of GCP is the following Theorem 3.1 of \cite{AbadieEE}:
\begin{Th}
\label{Thm:DefGCP}
A $C^*$-algebra $A$ equipped with a gauge action $\gamma$ is the GCP of $A^{(0)}$ by $A^{(1)}$ if and only if it is generated as a $C^*$-algebra by $A^{(0)}$ and $A^{(1)}$.
\end{Th}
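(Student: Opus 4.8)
Write $B := A^{(0)}$ and $E := A^{(1)}$, so that we must compare $A$ with $\PiCrG = A^{(0)} \rtimes_{A^{(1)}} \Z$, and recall that the natural Hilbert bimodule structure on $E$ is $\langle \xi, \zeta \rangle_B = \xi^* \zeta$ and ${}_B\langle \xi, \zeta \rangle = \xi \zeta^*$ (both land in $A^{(0)}$, since $\gamma_z(\xi^*\zeta) = \bar z z\, \xi^*\zeta = \xi^*\zeta$ for $|z|=1$, and similarly on the left). The plan is to produce the canonical comparison morphism from the universal property and then run a gauge-invariant uniqueness argument. First I would observe that the inclusions $\pi \colon B \inj A$ and $S \colon E \inj A$ form a representation of $E$ on $A$ in the sense of Definition \ref{Def:RepHilbBimod}: relations (i) and (iv) are precisely the two scalar products just described, while (ii) and (iii) are associativity of multiplication in $A$. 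By the universal property in Definition \ref{Def:PiCrG} this yields a $*$-homomorphism $\rho \colon \PiCrG \to A$ with $\rho \circ \pi_E = \pi$ and $\rho \circ S_E = S$. For the ``only if'' direction there is then nothing more to do: if $\rho$ is an isomorphism (\ie $A$ \emph{is} the GCP), then $A = \rho(\PiCrG)$ is the $C^*$-algebra generated by $\rho(\pi_E(B)) \cup \rho(S_E(E)) = A^{(0)} \cup A^{(1)}$.

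For the converse, assume $A$ is generated by $A^{(0)} \cup A^{(1)}$. Then the range of $\rho$ is a $C^*$-subalgebra of $A$ containing the generators, hence $\rho$ is surjective, and the whole difficulty is injectivity. Here I would use the gauge actions. By the universal property, for each $z \in S^1$ the pair $(\pi_E, z S_E)$ is again a representation of $E$ (the scalar $z$ cancels in (i) and (iv) because $|z|=1$, and is harmless in (ii)--(iii)); this produces a gauge action $\gamma'$ on $\PiCrG$ with $\gamma'_z(\pi_E(b)) = \pi_E(b)$ and $\gamma'_z(S_E(\xi)) = z S_E(\xi)$, pointwise continuity following from the uniform estimate on the dense set of algebraic elements. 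By construction $\rho$ intertwines $\gamma'$ and the given action $\gamma$. Averaging over $S^1$ then furnishes faithful conditional expectations $\E_A$ and $\E'$ onto the fixed-point algebras $A^{(0)}$ and $(\PiCrG)^{(0)}$, and equivariance of $\rho$ gives $\E_A \circ \rho = \rho \circ \E'$.

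The heart of the argument is the standard gauge-invariant uniqueness reduction together with an identification of $(\PiCrG)^{(0)}$. If $\rho(x) = 0$, then $\rho(\E'(x^*x)) = \E_A(\rho(x)^*\rho(x)) = 0$, so it suffices to know that $\rho$ is injective on $(\PiCrG)^{(0)}$: faithfulness of $\E'$ then forces $x^*x = 0$, \ie $x = 0$. To control the fixed-point algebra I would exploit that $E$ is a genuine Hilbert \emph{bimodule}: relation (iv) gives $S_E(\xi) S_E(\eta)^* = \pi_E({}_B\langle \xi, \eta \rangle)$, and combined with (ii)--(iii) this lets any balanced monomial $S_E(\xi_1) \cdots S_E(\xi_k) \pi_E(b) S_E(\eta_k)^* \cdots S_E(\eta_1)^*$ collapse step by step into $\pi_E(B)$. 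Since $\E'$ sends an algebraic element to the sum of its balanced monomials and is norm-contractive, its range --- which is exactly $(\PiCrG)^{(0)}$ --- lies in the closed set $\pi_E(B)$; conversely $\pi_E(B) \subseteq (\PiCrG)^{(0)}$, whence $(\PiCrG)^{(0)} = \pi_E(B)$. Finally $\rho \circ \pi_E$ is the inclusion $A^{(0)} \inj A$, which is injective, so $\rho$ is injective on $\pi_E(B) = (\PiCrG)^{(0)}$, which completes the argument.

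I expect the main obstacle to be the identification $(\PiCrG)^{(0)} = \pi_E(B)$: one must check carefully that every balanced word genuinely reduces to an element of $\pi_E(B)$ using \emph{both} scalar products (this is precisely the feature distinguishing Hilbert bimodules from general $C^*$-correspondences, where the fixed-point algebra is only an inductive limit of the $\bo(E^{\otimes n})$), and that the averaged expectation $\E'$ is faithful with range exactly this fixed-point algebra.
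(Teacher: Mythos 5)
Your proof is correct, but note that there is nothing in the paper to compare it against: the paper states Theorem \ref{Thm:DefGCP} as a quotation of Theorem 3.1 of \cite{AbadieEE} and gives no proof of its own. Your argument is a self-contained gauge-invariant uniqueness proof: build the canonical morphism $\rho \colon \PiCrG \to A$ from the universal property, transport the circle action to $\PiCrG$ via the representations $(\pi_E, zS_E)$, average to get faithful conditional expectations intertwined by $\rho$, and reduce injectivity of $\rho$ to injectivity on the fixed-point algebra, which you identify as $\pi_E(B)$ by word reduction. This differs from the route in \cite{AbadieEE}, where the generalized crossed product is realized as the cross-sectional $C^*$-algebra of a Fell bundle over $\Z$ with fibers the tensor powers $E^{\otimes n}$ (and their duals), so that the fixed-point algebra identification and the faithfulness of the expectation are inherited from Fell bundle theory and the amenability of $\Z$; your approach buys self-containment at the cost of the combinatorial reduction step. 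On that step, one small expository caution: a degree-zero monomial need not have the nested ``balanced'' shape $S_E(\xi_1)\cdots S_E(\xi_k)\pi_E(b)S_E(\eta_k)^*\cdots S_E(\eta_1)^*$ you display (consider $S_E(\xi)^*\pi_E(b)S_E(\eta)$ or interleaved words such as $S_E(\xi)^*S_E(\eta)S_E(\zeta)S_E(\omega)^*$); the correct induction is that any nonempty word of total degree zero must contain an adjacent pair $S_E S_E^*$ or $S_E^* S_E$, which collapses into $\pi_E(B)$ by (iv) or (i) respectively, after first absorbing all $\pi_E(b)$ factors into neighbouring $S_E$-letters using (ii), (iii) and their adjoints. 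You flag exactly this point yourself, and it is indeed where the Hilbert \emph{bimodule} hypothesis (both inner products) is used, as opposed to a general $C^*$-correspondence where only $S^*S$ collapses and the fixed-point algebra is the inductive limit of the $\bo(E^{\otimes n})$. With that reduction spelled out, your proof is complete.
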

Using this characterisation, we can prove that commutative GCP correspond essentially to principal $S^1$-bundles:
\begin{Prop}
\label{Prop:CommGCP}
If $A$ is a unital commutative $C^*$-algebra carrying an $S^1$-action such that 
\begin{itemize}
\item
 $E:=A^{(1)}$ generates $A$ as $C^*$-algebra,
\item 
 $E$ is a f.g. projective module over $B := A^{(0)}$,
\end{itemize}
then 
\begin{enumerate}[(i)]
\item
$B = C(X)$ for some compact space $X$,
\item 
$E$ is the module of sections of some \emph{line} bundle $\Ll \to X$,
\item 
$A$ is isomorphic to $C(P)$ where $P \to X$ is the principal $S^1$-bundle over $X$ associated to $\Ll$ and the gauge action comes from the principal $S^1$-action.
\end{enumerate}
\end{Prop}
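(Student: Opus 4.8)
The plan is to reconstruct the geometry from the algebra via Gelfand duality and the gauge-action characterization already available. First I would establish (i): since $A$ is unital and commutative, so is its fixed-point subalgebra $B = A^{(0)}$, and as a unital commutative $C^*$-algebra $B = C(X)$ for a compact Hausdorff space $X$ by Gelfand--Naimark. For (ii), I invoke Remark \ref{Rem:Frame}: the hypothesis that $E = A^{(1)}$ is finitely generated projective over $B$ means $E$ admits a frame, hence by Serre--Swan (\cite{EltNCG}, Theorem 2.10) $E$ is the module of sections of some complex vector bundle over $X$. The work is to show this bundle has rank one, \ie is a \emph{line} bundle. The key algebraic input is the relation ${}_B\langle \xi,\eta\rangle \zeta = \xi\langle\eta,\zeta\rangle_B$ of the Hilbert bimodule structure (Definition \ref{Def:HilbBimod}), combined with the fact that $A$ is commutative. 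I would argue that commutativity forces each fibre $E_x$ to be one-dimensional: evaluating the bimodule identity at a point $x \in X$ and using that $S_E(\xi)S_E(\zeta)^* = \pi_E({}_B\langle\xi,\zeta\rangle)$ commutes with $S_E(\eta)S_E(\omega)^*$ inside the commutative algebra $A$ should pin down the rank. Concretely, the left and right inner products both land in $C(X)$, and commutativity of the products $S(\xi)S(\eta)^*$ means the associated rank-one endomorphisms of $E$ all commute; a fibrewise analysis shows $\dim_\C E_x \le 1$, while fullness (implicit in $E$ generating $A$) gives $\dim_\C E_x \ge 1$.

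For (iii), the strategy is to identify the total space. Each spectral subspace $A^{(k)}$ is, by the remark following \eqref{Eqn:SpecSubspace}, a Hilbert bimodule over $B$, and the multiplication $A^{(k)} \otimes_B A^{(l)} \to A^{(k+l)}$ realizes $A^{(k)} \cong E^{\otimes k}$ for $k \ge 0$ (and the conjugate module for $k < 0$), since $A$ is the GCP of $B$ by $E$ by Theorem \ref{Thm:DefGCP} (here I use the hypothesis that $E = A^{(1)}$ generates $A$). Thus $A = \overline{\bigoplus_k A^{(k)}}$ decomposes as a graded algebra whose degree-$k$ part is the sections of $\Ll^{\otimes k}$. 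I would then exhibit the maximal ideal space of $A$ as the unit circle bundle $P$ of $\Ll$: a character $\chi$ of $A$ restricts to a point $x \in X$ on $B$, and its values on $A^{(1)} = \Gamma(\Ll)$ determine a unit-length linear functional on the fibre $\Ll_x$, \ie a point of the associated principal $S^1$-bundle. The relation $S(\xi)^*S(\xi) = \pi(\langle\xi,\xi\rangle_B)$ ensures $|\chi(S(\xi))|^2 = \langle\xi,\xi\rangle_B(x)$, so the phase of $\chi$ on the fibre is exactly a point of the circle $P_x$. Running Gelfand duality backwards gives $A \cong C(P)$, with the gauge $\Z$-grading matching the weight decomposition of the principal $S^1$-action by construction.

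The main obstacle I anticipate is the rank-one argument in (ii), which is where \emph{commutativity of $A$} (as opposed to just of $B$) is essential and must be used carefully. One must rule out higher-rank bundles, and the cleanest route is probably to observe that the operators $S(\xi)S(\zeta)^* \in A^{(0)} = C(X)$ together with $S(\xi)^*S(\zeta) \in C(X)$ force, at each $x$, the endomorphism algebra of $E_x$ generated by rank-one operators $\xi\langle\zeta,\cdot\rangle$ to be commutative — and a full matrix algebra $M_r(\C)$ is commutative only when $r = 1$. A secondary technical point is verifying that the Gelfand spectrum of $A$ is exactly the \emph{total space} of the circle bundle and not some quotient or covering; this requires checking that characters separating points of $P$ all arise and that the topology matches, which follows from the universal property in Definition \ref{Def:PiCrG} but should be stated with care. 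The identification of the gauge action with the principal $S^1$-action is then essentially formal, reading off the $\Z$-grading on $C(P)$ induced by the fibrewise circle action.
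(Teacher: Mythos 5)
Your proposal is correct, and its overall skeleton coincides with the paper's: Gelfand--Naimark for (i), Serre--Swan for (ii), and an identification of the characters of $A$ with points of the unit circle bundle of $\Ll$ for (iii). The genuinely different ingredient is the rank-one argument. The paper evaluates fibrewise and uses that $A$ is commutative and contains $E\otimes_B E$ to conclude $\xi\otimes\eta=\eta\otimes\xi$ in $\Ll_{x_0}\otimes\Ll_{x_0}$, which immediately forces the rank to be at most one; you instead observe that each rank-one operator $\theta_{\xi,\zeta}=\xi\langle\zeta,\cdot\rangle_B$ is, by the bimodule axiom of Definition \ref{Def:HilbBimod}, left multiplication by ${}_B\langle\xi,\zeta\rangle=\xi\zeta^*\in B$, so products of such operators are governed by products in $B$; hence the evaluations at $x$ generate a commutative quotient of $\End(E_x)\cong M_{r_x}(\C)$, giving $r_x\le 1$. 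One mild correction to your own commentary: contrary to your remark that commutativity of $A$ ``as opposed to just of $B$'' is essential at this step, your mechanism uses only commutativity of $B$ together with the two-sided inner product structure -- it actually shows that \emph{any} Hilbert bimodule over a commutative $C^*$-algebra has fibrewise rank at most one, which is slightly more general than what the paper's tensor-symmetry shortcut establishes (the paper's argument does use commutativity of $A$ directly, via the injection of $E\otimes_B E$ into $A^{(2)}$). For $r_x\ge 1$ both routes agree: a character over $x$ would annihilate $E$ and hence all of $C^*(E)=A$, contradicting unitality. Concerning (iii), the paper works locally, producing a section $\xi_1$ with $\langle\xi_1,\xi_1\rangle=1$ near $x_0$ and showing that a character is determined by the pair $(x,\lambda)$ with $\lambda=\varphi(\xi_1)\in U(1)$, whereas your treatment is global (the weight decomposition $A^{(k)}\simeq\Gamma(\Ll^{\otimes k})$ together with unit-length functionals on fibres); your appeal to the universal property of Definition \ref{Def:PiCrG} to realize every pair $(x,\lambda)$ as a character is a clean substitute for the paper's local-frame construction, provided you note that Theorem \ref{Thm:DefGCP} is what licenses applying that universal property to $A$ itself. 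The point-set verification that the resulting bijection $\hat A\to P$ is a homeomorphism is left equally terse in both treatments and is closed in either case by the continuous-bijection-between-compact-Hausdorff-spaces argument.
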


\begin{Rem}
The previous hypotheses ensure that $A$ and $B$ with the gauge action form a \emph{Hopf-Galois extension}, as first introduced in \cite{HGExtensionKT} and later related to ``noncommutative principal bundle'' -- see \eg \cite{QgpGaugeThQBM,QgpGaugeThQErrBM,PpalBundleHajac,PpalActionEllwood}, among numerous others. We could deduce the previous result from these general directions of reseach, however we include the short proof below for self-containment and clarity.
\end{Rem}

\begin{proof}
Since $A$ is unital, there is a compact space $P$ such that $A = C(P)$; $X$ is of corse the spectrum of $A$. It is also clear that $1_A \in B$. Consider the characters $\varphi \colon A \to \C$ of $A$: they identify with points of $P$. Any such $\varphi \in P$ induces by restriction a character on $B$, \ie $\varphi(b) =b(x_0)$ for all $b \in B$ and some $x_0 \in X$. This defines the map $P \to X$.

We can now apply the Serre-Swan Theorem to $E := A^{(1)}$ and get a vector bundle $\Ll \to X$ whose module of sections is $E$. Since $\Ll$ is a bundle over $X$, for any $x_0 \in X$ we can define a map $\Ev_{x_0} \colon \Ll \to \Ll_{x_0}$ where $\Ll_{x_0}$ is the fiber of $\Ll$ over $x_0$ -- in fact, this map extends to a $C^*$-algebra morphism $\Ev_{x_0} \colon B \rtimes _E \Z \to \C \rtimes _{\Ll_{x_0}} \Z$, thereby defining a notion of ``local'' equality. $A$ is commutative and contains $E \otimes_B E$, hence we must have $\xi \otimes \eta = \eta \otimes \xi$ for any $\xi, \eta \in \Ll_{x_0}$. This is only possible if the rank of $\Ll$ is $1$ or $0$ -- the latter is impossible since $E$ generates $A$, unless $A = \{ 0\}$.

Thus, for any $x_0 \in X$, we can find a section $\xi_1$ of $\Ll$ such that $\langle \xi_1, \xi_1 \rangle = 1$ locally around $x_0$. Consequently $\varphi(\xi_1)^* \varphi(\xi_1) = \varphi(\langle \xi_1, \xi_1 \rangle) = 1$, which shows that $\varphi(\xi_1)$ is a complex number of module $1$. Moreover, any algebraic $F \in A$ can be written locally around $x_0$ as a sum 
$$ F = \sum_{} b_n (\xi_1)^n $$
Thus $\varphi$ is totally determined by $x_0$ and $\lambda := \varphi(\xi_1) \in U(1) \subseteq \C$. Moreover, any such choice $(x, \lambda)$ for $x$ around $x_0$ defines a character of $A$.

\smallbreak

Finally, the gauge action $\gamma$ is given by 
\begin{align*}
\varphi(\gamma_z(b)) &= b
&
\varphi(\gamma_z(\xi_1)) &= z \xi_1
\end{align*}
for any $b \in B$ and complex number $z$ with $|z|=1$, therefore completing the proof.
\end{proof}

\section{Derivations and extensions of connexions}\label{Section:Derext}

The next step of our construction is to link the spectral triple on the base algebra with the Hilbert bimodule, \textsl{via} a connexion satisfying certain conditions (see Definition \ref{Def:Main} below). For the rest of this section, we fix a Rieffel spectral triple $(\pi,D,\Hh)$ on a unital $C^*$-algebra $B$ with components $\partial_j$ and associated subspace $\Bb$ (see Definition \ref{Def:TrSp} and \ref{Def:RieffelTrSp}). We denote again by $B_1\subseteq B$ the operator-$*$-algebra closure of $\Bb$ associated to the derivation $\delta_D =[D,\;\cdot\;]$, see Proposition \ref{Prop:DefB1}.
\begin{Def}
\label{Def:Main} 
A \emph{two-sided Hermitian $D$-connexion} on a Hilbert bimodule $E$ over $B$ is a map $\nabla \colon \modHilbL \to E \otimes _B \Omega^1_D$   defined on a dense subspace $\modHilbL\subseteq E$ which is stable under both left and right multiplication by $\Bb$ such that
\begin{itemize}
\item
$\nabla $ is a Hermitian $D$-connexion in the sense of Definition \ref{Def:HermConnexion}.
\item ${}_B \langle \modHilbL, \modHilbL \rangle \subseteq B_1$ and
for all $j$ and $\xi, \eta \in \modHilbL$
\begin{equation}
\label{Eqn:CompScalProdj2}
 {}_B \langle \nabla _j(\xi), \eta \rangle + {}_B \langle \xi, \nabla _j(\eta) \rangle = \partial _j \left( {}_B \langle \xi, \eta \rangle \right),
\end{equation}
 as well as for all $j$, $\xi \in \modHilbL $ and $b \in \Bb$
\begin{equation}
\label{Eqn:DerivNabla2}
\nabla _j(b \xi) = b \nabla _j(\xi) + \partial _j(b) \xi,
\end{equation}
where $\nabla_j:\modHilbL\to E$ denote the components of $\nabla$ in the sense of Lemma \ref{Lem:DecompNabla}.

\end{itemize}
\end{Def}

\begin{Rem}
The main difference between \eqref{Eqn:CompScalProdj} and \eqref{Eqn:CompScalProdj2} is that the second uses the \emph{left} scalar product whereas the first involves the \emph{right} scalar product.
\end{Rem}

\begin{Rem}
\label{Rem:ProdScalXXOmega}
We can define in $\Omega^1_D$ the following scalar products:
\begin{align*}
{}_B \langle \nabla \xi, \eta \rangle &:= \sum {}_B \langle \nabla _j(\xi), \eta \rangle \otimes \clgen_j
&
{}_B \langle \xi, \nabla \eta \rangle &:= -\sum {}_B\langle \xi,  \nabla _j(\eta) \rangle \otimes \clgen_j.
\end{align*}
It is easy to check that $\left( {}_B \langle \nabla \xi, \eta \rangle \right)^* = {}_B \langle \eta, \nabla \xi \rangle $. Just like in Remark \ref{Rem:StabE1}, the condition \eqref{Eqn:CompScalProdj2} is equivalent to:
\begin{equation}
\label{Eqn:CompScalProd3}
{}_B \langle \nabla \xi, \eta \rangle - {}_B \langle \nabla \xi, \eta \rangle = [D, {}_B \langle \xi, \eta \rangle ].
\end{equation}
This definition of the scalar products on $\Omega^1_D$ is coherent with the pairing defined in (4.2) p. 17 of \cite{SpecFlowKL}.
\end{Rem}

\begin{Ex}
If $G$ is a given Lie group, $B$ is equipped with a $G$-action $\alpha$ and $\modHilb $ is a Hilbert module over $B$ endowed with a compatible \emph{Hilbert bimodule action $\beta$} of $G$, \ie $\beta_g(\xi b) = \beta_g(\xi) \alpha_g(b)$, $\beta_g(b \xi) =  \alpha_g(b) \beta_g(\xi)$ and
\begin{align*}
\langle \beta_g(\xi), \beta_g(\eta) \rangle _B &= \alpha_g \left( \langle \xi, \eta \big \rangle_B \right)
&
{}_B \langle \beta_g(\xi), \beta_g(\eta) \rangle &= \alpha_g \big( {}_B \langle \xi, \eta  \rangle \big),
\end{align*}
then the $G$-smooth elements $\modHilbL$ of $\modHilb$ and the infinitesimal generators $\nabla  _j$ of the action $\beta$ define a two-sided Hermitian connexion (this an easy generalisation of Proposition 4.9 in \cite{PairingsQHM}). We can make it concrete using the Dirac-Rieffel operator as in Definition \ref{Def:RieffelTrSp}.
\end{Ex}

\begin{Rem}
In the situation described in \cite{TrSpLieGpGG}, \ie if the Dirac operator arises from an \emph{ergodic} action ($\forall g \in G, \alpha_g(b) = b \implies b \in \C 1$) of a compact Lie group $G$, the results of \cite{EquivHilbModGoswami} show that all $G$-$B$-Hilbert modules are embeddable, \ie $E$ is $G$-equivariantly embedded into $B \otimes \Hh_0$ for some Hilbert space $\Hh_0$ endowed with a $G$-action.
\end{Rem}

Consider now a fixed Hilbert bimodule $E$ over $B$ and denote by $\A:=B\rtimes_E\mathbb{Z}$ the associated GCP. Suppose $\nabla$ is a two-sided Hermitian $D$-connexion on $E$.  We denote by $E_1 \subseteq E$ the operator space obtained from $E$ and $\nabla $ by Proposition \ref{Prop:OpStarMod}.
\begin{Lem}
\label{Lem:DefDeriv}
Denote by $\Aa$ the sub-$*$-algebra of $\A$ generated $*$-algebraically from $E_1$ and $B_1$. The components $\partial _j$ and $\nabla _j$ of $D$ and $\nabla $, respectively, extend uniquely into  $*$-derivations $\nablat_j$ defined on $\Aa$ with values in $\A$. These extensions $\nablat_j$ preserve the gauge-action on $\Aa$.
\end{Lem}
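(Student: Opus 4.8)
The plan is to pin down $\nablat_j$ by its values on the generators of $\Aa$ and then show that the only relations it must respect --- those defining the generalized crossed product --- are exactly the four conditions carried by a two-sided Hermitian connexion. Concretely I would set
$$\nablat_j(\pi_E(b)) := \pi_E(\partial_j(b)), \qquad \nablat_j(S_E(\xi)) := S_E(\nabla_j(\xi)) \qquad (b\in B_1,\ \xi\in E_1),$$
and impose the Leibniz rule together with the $*$-derivation property $\nablat_j(x^*)=\nablat_j(x)^*$ (which forces $\nablat_j(S_E(\xi)^*)=S_E(\nabla_j\xi)^*$). Since $\Aa$ is, by definition, the $*$-algebra generated by $\pi_E(B_1)$ and $S_E(E_1)$, these prescriptions determine $\nablat_j$ uniquely on $\Aa$; thus \emph{uniqueness is immediate} and the whole content lies in well-definedness.

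A clean way to package the verification is to pass to the square-zero $*$-algebra $\A\ltimes\A$, namely $\A\oplus\A$ with product $(a,m)(a',m')=(aa',\,am'+ma')$ and involution $(a,m)^*=(a^*,m^*)$: a $*$-derivation $\Aa\to\A$ is precisely a $*$-homomorphism $\Aa\to\A\ltimes\A$ whose first component is the inclusion. I would therefore define $\tilde\pi(b):=(\pi_E(b),\pi_E(\partial_j b))$ and $\tilde S(\xi):=(S_E(\xi),S_E(\nabla_j\xi))$ and check the representation relations (i)--(iv) of Definition~\ref{Def:RepHilbBimod}. A direct matrix-style computation shows that (ii) and (iii) collapse to the right- and left-hand Leibniz rules \eqref{Eqn:DerivNabla} and \eqref{Eqn:DerivNabla2}, that (i) collapses to the right-hand Hermitian identity \eqref{Eqn:CompScalProdj}, and that (iv) collapses to the left-hand Hermitian identity \eqref{Eqn:CompScalProdj2}; that $\tilde\pi$ is a $*$-homomorphism uses that each $\partial_j$ is a $*$-derivation on $B$ (Definition~\ref{Def:RieffelTrSp}). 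This is exactly the stage at which \emph{both} the right- and the left-handed compatibility of $\nabla$ are consumed, which is why a two-sided connexion is needed rather than an ordinary one.

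To upgrade ``the relations hold'' into an honest homomorphism on $\Aa$ --- i.e.\ to rule out any further, ``hidden'' relations in $\A$ --- I would invoke the gauge grading. Because $\partial_j$ preserves $B=\A^{(0)}$ and $\nabla_j$ sends $E_1\subseteq\A^{(1)}$ into $E\subseteq\A^{(1)}$, the maps $\tilde\pi,\tilde S$ are gauge-homogeneous of degrees $0$ and $1$ for the diagonal gauge action on $\A\ltimes\A$; hence the induced map is gauge-equivariant and respects the Fourier decomposition $\Aa=\bigoplus_n\Aa^{(n)}$ coming from the spectral subspaces \eqref{Eqn:SpecSubspace}. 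For a Hilbert bimodule the canonical maps $E^{\otimes n}\to\A^{(n)}$ are isomorphisms of $B$-bimodules (this is part of the structure theory of \cite{AbadieEE}), so on the degree-$n$ part ($n\geq0$) the prescription is simply the iterated tensor-product connexion $\nabla^{(n)}_j=\sum_i 1^{\otimes(i-1)}\otimes\nabla_j\otimes 1^{\otimes(n-i)}$, and the negative degrees are fixed by the involution. The one remaining well-definedness point is that $\nabla^{(n)}_j$ descends to the \emph{balanced} tensor product, i.e.\ respects $\xi b\otimes\eta=\xi\otimes b\eta$; this is again a one-line check from \eqref{Eqn:DerivNabla} and \eqref{Eqn:DerivNabla2}. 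Gauge-equivariance of $\nablat_j$ is then built in, being just the statement that $\nablat_j$ preserves every spectral subspace.

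I expect the main obstacle to be the well-definedness bookkeeping rather than any single estimate: one must ensure that the two distinct sources of relations in the crossed product --- the degree-lowering \emph{contractions} $S_E(\xi)^*S_E(\zeta)=\pi_E(\langle\xi,\zeta\rangle)$ and $S_E(\xi)S_E(\zeta)^*=\pi_E({}_B\langle\xi,\zeta\rangle)$, and the \emph{balancing} relations on the tensor powers --- are respected simultaneously. The grading separates these cleanly, reducing the first to the two Hermitian conditions \eqref{Eqn:CompScalProdj}--\eqref{Eqn:CompScalProdj2} and the second to the two Leibniz rules \eqref{Eqn:DerivNabla}--\eqref{Eqn:DerivNabla2}; it is precisely the symmetry between the right and the left scalar products that makes both halves go through and makes the resulting $\nablat_j$ a gauge-equivariant $*$-derivation on $\Aa$.
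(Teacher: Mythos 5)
Your proposal is correct and takes essentially the same route as the paper: uniqueness by Leibniz expansion on words in the generators, well-definedness by verifying that the four relations of Definition \ref{Def:RepHilbBimod} are respected --- (i) and (iv) reducing to the Hermitian identities \eqref{Eqn:CompScalProdj} and \eqref{Eqn:CompScalProdj2}, (ii) and (iii) to the Leibniz rules \eqref{Eqn:DerivNabla} and \eqref{Eqn:DerivNabla2} --- followed by a degree-by-degree argument on the spectral subspaces, which is exactly the paper's concluding step (``decompose $\Aa$ into an algebraic sum of spectral subspaces and use induction on the degree''). Your square-zero packaging $\A\ltimes\A$ and the explicit iterated-tensor/balancing description of the degree-$n$ part are cosmetic refinements of that same argument, and you rightly substitute the grading argument for the universal property of the GCP (which could not be invoked anyway, $\A\ltimes\A$ not being a $C^*$-algebra).
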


\begin{proof}
If such derivations exist, then given any $*$-algebraic combination $\Xi$ of elements $\xi \in E_1$ and $b \in B_1$ (\eg $\Xi= \xi_1 b_1 \xi_2^* b_2$), the $*$-derivation property gives a unique expression for $\nablat_j(\Xi)$. On our example, we get:
$$
 \nablat_j( \Xi) = \nablat_j(\xi_1) b_1 \xi_2^* b_2 + \xi_1 \partial _j(b_1) \xi_2^* b_2 +  \xi_1 b_1 \nablat_j(\xi_2)^* b_2 +  \xi_1 b_1 \xi_2^* \partial _j(b_2).
$$
This proves unicity. We now need to show that these expressions indeed define a derivation on $\Aa$. To this end, it suffices to check that the analogs of relations (i) to (iv) of Definition \ref{Def:RepHilbBimod}  behave suitably under the derivation. This is in turn ensured by the assumptions and Definition \ref{Def:Main}. For instance, regarding (i):
$$
\nablat_j \big( \xi^* \zeta \big) = \partial _j \big( \langle \xi, \zeta \rangle_B \big) = \langle \nabla_j( \xi), \zeta \rangle_B  + \langle  \xi, \nabla_j(\zeta) \rangle_B = \nablat_j( \xi)^* \zeta + \xi^* \nablat_j(\zeta),
$$
where we used \eqref{Eqn:CompScalProdj}. Regarding  (ii), we have:
$$ 
\nablat_j(\xi b) = \nablat_j(\xi) b + \xi \partial _j(b)
$$
by Equation \eqref{Eqn:CompScalProdj2}.

\medbreak

For a formal proof, decompose $\Aa$ into an algebraic sum of spectral subspaces (see equation \eqref{Eqn:SpecSubspace}) and use induction on the degree to prove first that $\nablat_j$ is well-defined, second that it is compatible with products. 
\end{proof}

\begin{Def}\label{Def:canonicalrep} Let $B$ be a $C^*$-algebra, $E$ a Hilbert bimodule over $B$ and $\A:=B\rtimes_E\mathbb{Z}$.
We define a $B$-Hilbert module $X$ as the $C^*$-module-completion of $\A$ for the $B$-valued scalar product:
$$ \langle \Xi, \Xi' \rangle_B := \E( \Xi^* \Xi' ) ,$$
where the conditional expectation $\E \colon \A \to B$ is induced by the gauge action. 
\end{Def}
\begin{Rem}
\label{Rem:InclA}
That is, $\E$ is the application defined by
$$\E(a):=\int_{S^1}\lambda.a\; d\lambda.$$
Equivalently, $X$ is the Hilbert sum obtained as direct sum of the spectral subspaces $\A^{(k)}$, $k \in \Z$ (as defined equation in \eqref{Eqn:SpecSubspace}).
Since the conditional expectation $\E$ is faithful on $\A$, we have an injection $\A \inj X$. Moreover, the conditional expectation is norm-contracting, therefore this injection is continuous.
\end{Rem}

We suppose that the Rieffel spectral triple $(\Hh,D)$ on $B$ is associated to a Clifford module $S$ over $\mathbbm{C}l(n)$ and again denote the images of the generators of $\mathbbm{C}l(n)$ in $\mathbbm{B}(S)$ by $\gamma_i$. Let $\Hh=\Hh_0\otimes S$ be the underlying Hilbert space. Suppose furthermore that $\nabla$ is a two-sided Hermitian $D$-connexion on $E$.

\begin{Prop}
\label{Prop:ExtConn}
Under the above hypothese, the derivations $\nablat_j \colon \Aa \to A$ from Lemma \ref{Lem:DefDeriv} combine to form a Hermitian $D$-connexion $\nablat \colon \Aa \to X \otimes _B \bo(\Hh)$ on setting
$$ \nablat(\Xi) := \sum_{j=1}^n \nablat_j(\Xi) \otimes \id_{\Hh_0} \otimes \clgen_j .$$
If $E$ is finitely generated and projective both as a left- and right-$B$-module, then 
\begin{enumerate}[(i)]
\item
$X$ admits a ``frame'' in the sense of Definition \ref{Def:FrameInf},
\item
$\nablat$ is closable and the $B_1$-module $X_1$ obtained from $\Aa$ by Proposition \ref{Prop:OpStarMod} is an operator $*$-module over $B_1$. It is dense in $X$ and the inclusion $X_1 \inj X$ is completely bounded.
\end{enumerate}
\end{Prop}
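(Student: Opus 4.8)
The plan is to verify the four assertions in order: first that $\nablat$ is a Hermitian $D$-connexion on $X$, then the existence of a frame, then closability, and finally the operator $*$-module structure together with complete boundedness of the inclusion.

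First I would check that $\nablat$ is a $D$-connexion by running Lemma \ref{Lem:DecompNabla} backwards. By Lemma \ref{Lem:DefDeriv} each component $\nablat_j$ is a $*$-derivation on $\Aa$ extending $\partial_j$ and $\nabla_j$, and in particular satisfies the right-Leibniz rule $\nablat_j(\Xi b) = \nablat_j(\Xi)b + \Xi\,\partial_j(b)$ for $b\in\Bb$; hence $\nablat = \sum_j \nablat_j \otimes \id_{\Hh_0}\otimes\clgen_j$ is a concrete $D$-connexion defined on the dense right $\Bb$-submodule $\Aa\subseteq X$. For the Hermitian property it suffices, by Remark \ref{Rem:StabE1}, to verify \eqref{Eqn:CompScalProdj} for the scalar product $\langle \Xi,\Xi'\rangle = \E(\Xi^*\Xi')$. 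The crucial observation is that, since $\nablat_j$ preserves the gauge action (Lemma \ref{Lem:DefDeriv}) and restricts to $\partial_j$ on $B = \A^{(0)}$, it commutes with the gauge-averaging conditional expectation, so $\E\circ\nablat_j = \partial_j\circ\E$ on $\Aa$. Applying $\E$ to the $*$-derivation identity $\nablat_j(\Xi^*\Xi') = \nablat_j(\Xi)^*\Xi' + \Xi^*\nablat_j(\Xi')$ then gives exactly $\langle\nablat_j(\Xi),\Xi'\rangle + \langle\Xi,\nablat_j(\Xi')\rangle = \partial_j\langle\Xi,\Xi'\rangle$, so $\nablat$ is Hermitian.

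Next, assuming $E$ is f.g.\ projective both as a left- and right-$B$-module, I would build the frame of (i) from the spectral decomposition $X = \bigoplus_{k\in\Z}\A^{(k)}$ (Remark \ref{Rem:InclA}), using the identification $\A^{(k)}\cong E^{\otimes k}$ as a right module for $k\geq 0$ and, for $k<0$, the identification of $\A^{(-k)}$ through $\Xi\mapsto\Xi^*$ with the conjugate of $E^{\otimes k}$ seen via its \emph{left} structure --- this is where left f.g.\ projectivity enters. Corollary \ref{Cor:FrameE1} furnishes a finite frame of $E$ lying in $E_1$, and the same applied to the left structure furnishes a left frame in $E_1$; tensor products of these give finite frames of each $E^{\otimes k}$ inside $\Aa$, and their adjoints frame the negative subspaces. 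Concatenating these finite frames over $k\in\Z$ produces a countable family $(\zeta_l)\subseteq\Aa$, and the partial sums $T_N = \sum_{l\leq N}\zeta_l\langle\zeta_l,\cdot\rangle$ converge strongly to $\id_X$ because they reconstruct each homogeneous component and $\sum_k\|\Xi_k\|^2<\infty$ for $\Xi\in X$. Since $X$ is a Hilbert sum of f.g.\ projective modules, it is (countably generated) projective, so this is a frame in the sense of Definition \ref{Def:FrameInf} with all elements in $\dom(\nablat)=\Aa$, proving (i). Closability then follows immediately from Proposition \ref{Prop:FrameClosed}, since $\nablat$ is a Hermitian $D$-connexion on the dense, $B_1$-stable subspace $\Aa$ and a frame of $X$ sits inside $\Aa$. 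For the operator $*$-module structure on $X_1 := \overline{\dom(\nablat)}$, I would reproduce the argument of Proposition \ref{Prop:OpStarMod} with the countable frame: the infinite matrix $P := \big(\langle\zeta_j,\zeta_k\rangle\big)_{jk}$ is a selfadjoint idempotent over $B_1$, and the maps $\Phi(\Xi) = (\langle\zeta_j,\Xi\rangle)_j$, $\Psi((b_j)) = \sum_j\zeta_j b_j$ identify $X_1$ with $PB_1^\infty$ as operator spaces; complete boundedness of the pairing and of the inclusion $X_1\inj X$ is obtained exactly as there, by representing everything faithfully on $X\otimes_B\Hh$ through the linking algebra and invoking the Hermitian identity above. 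Density of $X_1$ in $X$ is automatic since the dense subspace $\Aa$ is contained in $X_1$.

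The hard part will be the passage from the finitely generated case to the countably generated module $X$: Corollary \ref{Cor:FrameE1} and Proposition \ref{Prop:OpStarMod} are phrased for f.g.\ projective modules, so I must check that the frame-based constructions --- the idempotent $P$ on the standard module $B_1^\infty$ and the complete-boundedness estimates --- survive when the frame is infinite and the reconstruction $T_N\to\id_X$ holds only strongly. The second delicate point is the bookkeeping for the negative spectral subspaces $\A^{(-k)}$, where the Hilbert bimodule compatibility ${}_B\langle\xi,\eta\rangle\zeta = \xi\langle\eta,\zeta\rangle$ together with left f.g.\ projectivity must be used to transport a right-module frame through the adjoint, so that the concatenated frame genuinely lands in $\Aa$ and reconstructs every component of $X$.
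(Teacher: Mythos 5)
Your proposal follows the paper's own route at every stage: componentwise verification of the connexion identity, frames of the spectral subspaces $\A^{(\pm k)}$ obtained by tensoring a right frame of $E$ (and, for negative degrees, a left frame transported through the adjoint --- exactly where left f.g.\ projectivity enters in the paper as well), closability via Proposition \ref{Prop:FrameClosed}, and the operator $*$-module structure from degreewise projections. Your treatment of the Hermitian property, via the intertwining $\E\circ\nablat_j=\partial_j\circ\E$ (valid because $\nablat_j$ preserves the gauge action and restricts to $\partial_j$ on $\A^{(0)}=B$), is a slightly cleaner packaging of the paper's computation, which reduces to gauge-homogeneous elements of equal degree and expands $\partial_j(\Xi_1^*\Xi_2)$ by the derivation rule; the two arguments are equivalent. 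Likewise, the worry you raise about the infinite matrix $P=\big(\langle\zeta_j,\zeta_k\rangle\big)_{jk}$ resolves itself: since $\langle\zeta_j,\zeta_k\rangle=\E(\zeta_j^*\zeta_k)$ vanishes unless $\zeta_j$ and $\zeta_k$ have the same degree, $P$ is block diagonal with finite blocks, namely the paper's $\bigoplus_{k\in\Z}P_k$; the paper simply applies Proposition \ref{Prop:OpStarMod} to each $\A^{(k)}$ separately instead of rerunning its proof globally.

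The one step where your justification as written does not suffice is the strong convergence $T_N\Xi\to\Xi$. Componentwise reconstruction together with ``$\sum_k\|\Xi_k\|^2<\infty$'' is not an argument: first, in a Hilbert module the norm of $\Xi=\sum_k\Xi_k$ is $\big\|\sum_k\langle\Xi_k,\Xi_k\rangle\big\|^{1/2}$, not an $\ell^2$-sum of the component norms; second, and more importantly, exact reconstruction of each homogeneous component for $N$ large (depending on $k$) controls nothing about $T_N$ applied to the tail $\sum_{|k|>K}\Xi_k$ unless the operators $T_N$ are uniformly bounded. The paper supplies precisely this missing ingredient: $T_N$ preserves the degree, and on each $\A^{(k)}$ the restriction $T_N\restriction\A^{(k)}$ is an increasing finite sum of positive operators whose full sum is the identity, whence $0\leqslant T_{N'}\restriction\A^{(k)}\leqslant T_N\restriction\A^{(k)}=1$ for $N$ large enough, so $\|T_{N'}\|\leqslant 1$ for all $N'$; strong convergence then follows by approximating $\Xi$ by an element of the algebraic direct sum, exactly as you intend. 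The bound is available from the structure you already set up, so the repair is one line --- but it is the line the whole frame argument rests on, and you flagged the passage as delicate without actually closing it.
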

\begin{proof}
We know that the $\nablat_j$ are well-defined maps from $\Aa$ to $A\subseteq X \otimes B(S)$ -- see Lemma \ref{Lem:DefDeriv}.


We prove first that $\nablat$ defines a $D$-connexion (Definition \ref{Def:OperatorConnexion}). To this end, we calculate for all $\Xi\in\Aa$ and $b\in \Bb$
\begin{multline*}
\nablat(\Xi b) = \sum \nablat_j(\Xi b) \otimes \id_{\Hh_0} \otimes \clgen_j = \sum_{} ( \nablat_j(\Xi) b + \Xi \, \nablat_j(b) ) \otimes \id_{\Hh_0} \otimes \clgen_j \\
= \nablat(\Xi) b + \Xi \otimes \left(\sum_{} \partial _j(b) \otimes \clgen_j \right) = \nablat(\Xi) b + \Xi \otimes [D, b].
\end{multline*}
We now evaluate $[D, \langle \Xi_1, \Xi_2 \rangle ]$ for $\Xi_1, \Xi_2$ in $\Aa$ in order to prove that $\nablat$ is Hermitian (Definition \ref{Def:HermConnexion}). Without loss of generality, we can assume that $\Xi_1$ and $\Xi_2$ are gauge homogeneous and of same degree -- otherwise, the scalar product vanishes and since $\nablat$ preserves the gauge action, the equality holds. We then have:
\begin{multline*}
[D, \langle \Xi_1, \Xi_2 \rangle ] = \sum_{} \partial _j \left( \langle \Xi_1, \Xi_2 \rangle  \right) \otimes \clgen_j = \sum_{} \partial _j \left( \Xi_1^* \Xi_2 \right) \otimes \clgen_j\\
= \sum_{} \left( \nablat_j(\Xi_1)^* \Xi_2 + \Xi_1^* \nablat_j(\Xi_2) \right) \otimes \clgen_j \\
= \sum_{} \langle \Xi_1, \nablat_j(\Xi_2) \rangle \cdot \id_{\Hh_0} \otimes \clgen_j - \Big( \sum_{} \langle \Xi_2, \nablat_j(\Xi_1) \rangle \cdot \id_{\Hh_0} \otimes \clgen_j \Big)^*\\
= \langle \Xi_1, \nablat(\Xi_2) \rangle  - \langle \nablat(\Xi_1),\Xi_2 \rangle,
\end{multline*}
where we used the derivations $\nablat_j$ on $\Aa$, together with the definition of the scalar product between elements of $X$ and $X \otimes _B \bo(\Hh)$ given in Remark \ref{Rem:ProdScalXXOmega}.

\medbreak

To prove (i), remark that given a right $B$-module $F$ and a $B$-bimodule $F'$, it is easy to show that if $(\xi_j)_{j=1}^n$ and $(\zeta_k)_{k=1}^m$ are right frames for $F$ and $F'$, then $(\xi_j \otimes \zeta_k)$ is a right frame for $F \otimes _{B} F'$. Consequently, if $E$ is finitely generated and projective as a right-module, all $\A^{(k)}$ for $k \in \N$ admit a frame whose elements are in $\Aa$. The same argument applies to $\A^{(-k)}$ for $k \in \N$, provided that $E$ be f.g. projective as \emph{left} $B$-module. We denote $(\xi_j)_{j=-\infty }^\infty $ the elements thus obtained, organised in blocks of elements of $\A^{(k)}$ for increasing $k$. We now prove that 
$$ T_N := \sum_{k = -N}^N \xi_k \langle \xi_k, \cdot \rangle $$
satisfies $T_N \Xi \to \Xi$ for all $\Xi \in X$.

\medbreak

It is clear from the definition that any $\Xi \in X$ is $\varepsilon$-close to a $\Xi_0 \in \Aa$ (where $\Xi_0$ contains only elements of a finite number of $\A^{(k)}$ involved, \ie an element of the algebraic direct sum). For a fixed $\Xi_0$, we clearly have $T_N \Xi_0 = \Xi_0$ for $N$ large enough. If we can prove $\| T_N \| \leqslant 1$ for all $N$, we are done. From the definition of $X$ and because $T_N$ preserves the degre, $\| T_N \| = \sup \big\{ \| T_N \restriction \A^{(k)} \| \big| k \in \Z \big\}$. But for any fixed $k$, $T_N \restriction \A^{(k)}$ is an increasing finite sum of positive operators (in $\bo_B(\A^{(k)})$). For $N$ large enough this sum is the identity. Thus for $N' \leqslant N$ we have 
$$
0 \leqslant (T_{N'} \restriction \A^{(k)}) \leqslant (T_N \restriction \A^{(k)}) = 1,
$$
which implies that $\| T_{N'} \| \leqslant 1$ and proves that $T_N$ is a ``frame'' in the sense of Definition \ref{Def:FrameInf}.

\medbreak

We can then apply Proposition \ref{Prop:FrameClosed} and prove that $\nablat$ is closable. The hypotheses of Proposition \ref{Prop:OpStarMod} are thus satisfied for each of the modules $\A^{(k)}$, hence all $\A^{(k)}$ for $k \in \Z$ are operator $*$-modules. Denote $P_k$ the associated matrix in $M(B)$ -- following the notations of \cite{SpecFlowKL}. The operator $*$-module $X_1$ is then obtained by Corollary \ref{Cor:FrameE1}. The associated projector is the direct sum $\bigoplus_{k \in \Z} P_k$.
\end{proof}

We note the following:
\begin{Cor}
Combining the derivations $\nablat_j$ of Lemma \ref{Lem:DefDeriv}, we obtain on $\Aa$ a closable derivation defined by
$$
\delta(a) = \sum_{j=1}^n\nablat_j(a) \otimes \clgen_j 
$$
with values in $X \otimes_B \bo(\Hh)$.
\end{Cor}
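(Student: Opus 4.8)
The plan is to observe that the map $\delta$ is nothing but the connexion $\nablat$ of Proposition \ref{Prop:ExtConn} read with respect to the \emph{algebra} structure of $\Aa$ rather than its module structure, so that both the closability and the Leibniz identity can be imported from results already established.

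First I would record that, term by term, $\delta(a) = \sum_{j=1}^n \nablat_j(a) \otimes \clgen_j$ coincides with $\nablat(a) = \sum_{j=1}^n \nablat_j(a) \otimes \id_{\Hh_0} \otimes \clgen_j$ once $\clgen_j$ is identified with $\id_{\Hh_0} \otimes \clgen_j \in \bo(\Hh)$. Thus $\delta$ and $\nablat$ are the same densely defined map $\Aa \to X \otimes _B \bo(\Hh)$, and closability of $\delta$ is exactly the closability of $\nablat$, which is part (ii) of Proposition \ref{Prop:ExtConn}. Equivalently, one may argue through components: the decomposition criterion of Lemma \ref{Lem:DecompNabla} says that $\nablat$ is closable if and only if each component $\nablat_j$ is, so the $\nablat_j$ are closable; applying the same criterion to $\delta$, whose components are again the $\nablat_j$, yields closability of $\delta$.

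Next I would check that $\delta$ is a derivation in the sense of Definition \ref{Def:Deriv}. Each $\nablat_j$ is, by Lemma \ref{Lem:DefDeriv}, a $*$-derivation on $\Aa$, so $\nablat_j(ab) = \nablat_j(a) b + a \nablat_j(b)$ for $a, b \in \Aa$. Since the Clifford operators $\clgen_j$ are fixed elements of $\bo(S)$, independent of the algebra variable, tensoring each of these identities with $\clgen_j$ and summing over $j$ gives $\delta(ab) = \delta(a) b + a \delta(b)$, which is the required Leibniz rule.

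The step that demands care -- and the main obstacle -- is to pin down the $A$-bimodule structure on the codomain $X \otimes _B \bo(\Hh)$ precisely enough that the two terms $\delta(a) b$ and $a \delta(b)$ are unambiguous elements and the displayed identity holds on the nose. The left action is the harmless one, coming from left multiplication on the $X$-factor and manifestly compatible with the balancing over $B$; the right action has to be arranged so that right multiplication by $b \in \Aa$ on the $X$-factor is consistent with the $\otimes_B$-relations. I would handle this by passing to the concrete, represented picture used in the proof of Proposition \ref{Prop:OpStarMod}, where $X$, $B_1$ and the forms are realised as operators and all the products in the Leibniz rule become genuine operator compositions; the compatibility of the pairings recorded in Remark \ref{Rem:ProdScalXXOmega} then guarantees that the assembled identity is exactly the one obtained above. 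This is precisely the same passage between a concrete connexion and its induced derivation already used in Lemma \ref{Lem:DecompNabla}, so no new idea is needed beyond bookkeeping.
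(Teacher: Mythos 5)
Your overall route---reading $\delta$ as the connexion $\nablat$ of Proposition \ref{Prop:ExtConn} (with $\clgen_j$ identified with $\id_{\Hh_0}\otimes\clgen_j$) and importing its closability---is the paper's route, and your verification of the Leibniz rule is fine: each $\nablat_j$ is a derivation on $\Aa$ by Lemma \ref{Lem:DefDeriv} and the $\clgen_j$ are constant, so the identity follows; your third paragraph's worry about the bimodule structure is harmless but essentially unnecessary, since $\delta(ab)=\delta(a)b+a\delta(b)$ involves only elements of the algebraic part, where all products are computed inside $\A$ itself. The gap lies in the sentence ``closability of $\delta$ is exactly the closability of $\nablat$''. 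Closability is not a property of the bare map: it depends on the norms placed on domain and codomain. As a derivation on the $C^*$-algebra $\A$ (Definition \ref{Def:Deriv}), $\delta$ must be tested against sequences $a_n\to 0$ in the $C^*$-norm of $\A$, whereas Proposition \ref{Prop:ExtConn} gives closability of $\nablat$ tested against sequences converging in the Hilbert-module norm of $X$, namely $\|a\|_X=\|\E(a^*a)\|^{1/2}$. Since $\|\cdot\|_X\le\|\cdot\|_\A$, these notions are not equivalent: closability of the connexion is the \emph{stronger} statement, and only one implication holds---fortunately the one you need, but your ``exactly'' asserts an equivalence that is false in general, and your argument as written never justifies even the needed direction. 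Your component-wise variant via Lemma \ref{Lem:DecompNabla} suffers from the same conflation of topologies.

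The missing one-line idea, which is the entire content of the paper's short proof, is Remark \ref{Rem:InclA}: the conditional expectation $\E$ is faithful and norm-contracting, so the injection $\A\inj X$ is continuous. Hence if $a_n\in\Aa$ satisfies $a_n\to 0$ in $\A$ and $\delta(a_n)\to y$ in $X\otimes_B\bo(\Hh)$, then $a_n\to 0$ in $X$ as well, and the closability of $\nablat$ from Proposition \ref{Prop:ExtConn} forces $y=0$. With this norm-comparison step inserted, your argument coincides with the paper's.
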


\begin{proof}
We consider a sequence $a_n \in \Aa$ such that $a_n \to a$ and $\delta(a_n) \to y$. By Remark \ref{Rem:InclA}, this implies that $a_n$ and $\delta(a_n)$ seen as sequences in $X$ and  $X \otimes _B \bo(\Hh)$ are convergent. But by Proposition \ref{Prop:ExtConn}, $\nablat$ is closable and therefore $y = 0$, which completes the proof.
\end{proof}

\begin{Def}
\label{Def:A1}
Let $\A_1 \subseteq \A$ be the operator $*$-algebra closure of $\Aa$ with respect to $\delta$ (Proposition \ref{Prop:DefB1}). It is therefore a subalgebra closed under holomorphic functional calculus.
\end{Def}
%
%
\section{Conditional expectation, vertical class and main results}\label{Section:TheLifting}
Let $B$ be a $C^*$-algebra and $E$ a Hilbert bimodule over $B$. Denote $A:=B\rtimes_E\mathbbm{Z}$ the (generalized) crossed product of $B$ by $E$. 
\begin{Def}
\label{Def:verticalclass}
We will call \emph{vertical class} the odd Kasparov $(A,B)$-module defined by the canonical representation of $A$ on the $A$-$B$ $C^*$-correspondence $X=\bigoplus_{k\in\mathbbm{Z}} A^{(k)}$ of Definition \ref{Def:canonicalrep} by left multiplication, and the operator $D_v$ determined by
$$(D_v\xi)_n=n\xi_n.$$
\end{Def}

\begin{Rem}
Note that $E\cong A^{(1)}$, and if $E$ is finitely generated projective then $E^*\cong\bar E\cong A^{(-1)}$ . Thus $X$ corresponds to the module
$$\mathcal{E}_\infty=\bigoplus_{n\in\mathbbm{Z}} E_\infty^{\otimes n}$$ introduced by Pimsner on page 195 of the fundamental article \cite{PiCrG}. Pimsner starts with a general $A$-$A$-$C^*$-correspondence $E$ which necessitates the construction of a Hilbert bimodule over the larger algebra $\mathcal{F}_E$ -- our situation is simpler in as far as we start out with a Hilbert bimodule.
\end{Rem}

Let now $B$  be equipped with a Rieffel-spectral triple $(\Hh,D_h)$ and operator-$*$-algebra  $B_1$. Further let $\nabla\colon \mathcal{E}\to E\otimes_B \bo(\Hh)$ be a two-sided $D_h$-connexion, $\underline\nabla$ the extension to a connexion on $X$, and $X_1$ the associated operator-$*$-module closure constructed in Proposition \ref{Prop:ExtConn}.

We are now ready to prove the following: 
\begin{Prop}
\label{Prop:CorrUnboundedKmod}
The pair $(X_1, \nablat)$ defines a correspondence from $(X,D_v)$ to $(\Hh, D_h)$ (compare 6.3 of \cite{SpecFlowKL}) if we use the natural $B_1$-operator module structure on $X_1$.
\end{Prop}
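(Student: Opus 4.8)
The plan is to verify, clause by clause, the definition of a correspondence given in \cite{SpecFlowKL}, 6.3, taking advantage of the fact that essentially all of the module-theoretic content has already been established in Proposition \ref{Prop:ExtConn}. Recall that such a correspondence from $(X, D_v)$ to $(\Hh, D_h)$ consists of an operator $*$-module $X_1$ over the operator $*$-algebra $B_1$, densely and completely boundedly included in $X$, together with a Hermitian connexion valued in $X_1 \otimes_{B_1} \Omega^1_{D_h}$, subject to compatibility conditions relating the selfadjoint regular operator $D_v$ to this data. My strategy is therefore to invoke Proposition \ref{Prop:ExtConn} for the operator-module and connexion part, and then to check the conditions involving $D_v$ directly, exploiting the fact that $D_v$ is nothing but the infinitesimal generator of the gauge action.

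First I would collect what is already available. By Proposition \ref{Prop:ExtConn} the space $X_1$ obtained from $\Aa$ is an operator $*$-module over $B_1$, it is dense in $X$, the inclusion $X_1 \inj X$ is completely bounded, and the ``frame'' of Definition \ref{Def:FrameInf}, organised in blocks $\A^{(k)}$, supplies the selfadjoint idempotent $\bigoplus_{k\in\Z} P_k$ realising $X_1$ inside the standard module. The same proposition shows that $\nablat$ is a closable Hermitian $D_h$-connexion; in particular the identity $[D_h, \langle \Xi_1, \Xi_2\rangle ] = \langle \Xi_1, \nablat(\Xi_2)\rangle - \langle \nablat(\Xi_1), \Xi_2\rangle$ verified there is exactly the Hermitian condition required of a correspondence, and the complete boundedness of the pairing is the content of Proposition \ref{Prop:OpStarMod}.

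It remains to treat the conditions involving $D_v$. Here I would use that, with respect to the decomposition $X = \bigoplus_{k} \A^{(k)}$, the operator $D_v$ acts as multiplication by the integer $k$ on $\A^{(k)}$, so that it is selfadjoint and regular with the algebraic elements $\Aa$ -- finite sums of gauge-homogeneous terms -- as a core. Since $B_1 \subseteq \A^{(0)}$ and the right $B_1$-action preserves the gauge degree, $D_v$ commutes with the $B_1$-module structure and with each block projection $P_k$, hence with $\bigoplus_k P_k$; thus $X_1$ is compatible with $D_v$ in the sense required. The crucial point is the interaction of $D_v$ with the connexion: by Lemma \ref{Lem:DefDeriv} each component $\nablat_j$ preserves the gauge action, hence preserves the gauge degree and therefore commutes with $D_v$. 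This is precisely the connexion compatibility condition of \cite{SpecFlowKL}, 6.3, ensuring that the graded commutator of $D_v$ (and of the grading operators $\clgen_j$) with the connexion terms is controlled, so that $D_v$ and $\nablat$ can subsequently be assembled into the product operator.

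The main obstacle I anticipate is the bookkeeping at the level of the codomain and of the infinite direct sum. One must check that $\nablat(\Xi)$, \textsl{a priori} only an element of $X \otimes_B \bo(\Hh)$, in fact lands in the operator-module tensor product $X_1 \otimes_{B_1} \Omega^1_{D_h}$, using the inclusion $\Omega^1_{D_h} \subseteq \bo(\Hh)$ together with the frame to rewrite $\nablat(\Xi) = \sum_j \nablat_j(\Xi) \otimes \id_{\Hh_0} \otimes \clgen_j$ in terms of frame coefficients lying in $B_1$. Equally, because $X$ is an \emph{infinite} direct sum, all the complete-boundedness and core estimates have to hold uniformly across the blocks $\A^{(k)}$; this uniformity is already guaranteed by the bound $\| T_N \| \leqslant 1$ for the frame operators established in the proof of Proposition \ref{Prop:ExtConn}, which is what allows the per-block data associated to each $\A^{(k)}$ to be summed into a single correspondence.
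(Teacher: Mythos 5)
Your reduction to Proposition \ref{Prop:ExtConn} correctly disposes of the operator-module and Hermitian-connexion clauses, and your observation that the $\nablat_j$ preserve the gauge degree is indeed the mechanism behind the compatibility of $D_v$ with the lifted operator (in the paper this is condition (4), checked by showing the relevant commutator vanishes on simple tensors). But there are two genuine gaps. First, you have misread the remaining content of Definition 6.3 of \cite{SpecFlowKL} as consisting only of ``conditions involving $D_v$''. Condition (3) there concerns the \emph{left action of $A$}: every $a$ in a dense subalgebra of $A = B \rtimes_E \Z$ must preserve a core of $1\otimes_{\nablat} D_h$, and $[1\otimes_{\nablat} D_h, a]$ must extend to a bounded operator on $X \otimes_B \Hh$. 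Your proposal never verifies this, and it does not follow from anything you cite: it requires the computation
$$[1\otimes_{\nablat} D_h, a](x\otimes\xi) = \nablat(a)\, x\otimes\xi$$
on simple tensors, which uses the derivation property of $\nablat$ on $\Aa$ (Lemma \ref{Lem:DefDeriv}) together with the fact that $\nablat(a)$ acts as a bounded operator. Without it there is no reason for the product operator to have bounded commutators with a dense subalgebra of $A$, i.e.\ no Kasparov module over $A$ at all.

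Second, conditions (3) and (4) are statements relative to a core of the operator $1\otimes_{\nablat} D_h$ --- not of $D_v$, for which your $\Aa$ (finite sums of gauge-homogeneous elements) is indeed a core but which is the wrong operator here. Verifying identities on homogeneous algebraic elements only suffices once one knows that $\Cc := \Aa \odot_{B_1} \dom(D_h)$ is a core for $1\otimes_{\nablat} D_h$, and this is a nontrivial step in the paper: one identifies $X$ with $P B_1^\infty$, shows that the Grassmann operator $1\otimes_{\nabla^{\mathrm{Gr}}} D_h$ is the compression $P\Diag(D_h)P$ and that $P\Cc'$ is a core for it, and then invokes Theorem 5.4 of \cite{SpecFlowKL} to see that $1\otimes_{\nablat} D_h$ is a bounded perturbation of the Grassmann operator, hence shares its domain and cores. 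Your uniformity remark ($\|T_N\|\leqslant 1$) concerns the frame of $X$ and does not substitute for this argument. Once the core is established and condition (3) is checked as above, your gauge-degree argument does yield condition (4), since $[D_v\otimes 1, 1\otimes_{\nablat} D_h]$ then vanishes on $\Cc$.
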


\begin{proof}
We assumed that $B$ is unital, thus it is clear that the unbounded Kasparov module $(\Hh, D_h)$ is essential. Checking two out of four points of Definition 
of correspondence is straightforward:
\begin{enumerate}[(1)]
\item
Proposition \ref{Prop:ExtConn}, (ii) ensures that $X_1$ is an operator $*$-module and Corollary \ref{Cor:FrameE1} ensures that the map $\nablat $ is a completely bounded Hermitian $D_h$-connexion from $X_1$ to $X \otimes _B \bo(\Hh)$.
\item
By definition, any $b \in B_1$ lies in the domain of the closure of $\delta_{D_h}$. Thus  by Lemma \ref{Lem:CloseDeriv} we have $b (\dom(D_h)) \subseteq \dom(D_h)$ and $[D_h, \cdot ] \colon B_1 \to \bo(\Hh)$ is completely bounded.
\end{enumerate}
Before treating the last two points, we establish that $\Cc := \Aa \odot_{B_1} \dom(D_h)$ (algebraic tensor product) is a core for $1 \otimes _{\nablat} D_h$. Recall that the algebra $\Aa$ used in the definition of $\Cc$ was introduced in Lemma \ref{Lem:DefDeriv}.

Consider the selfadjoint operator $\Diag(D_h) = D_h \otimes \id$ acting on the Hilbert space $\Hh \otimes \ell^2(\Z)$. The algebraic tensor product of $\dim(D_h)$ with vectors of compact support in $\ell^2(\Z)$ is a core $\Cc'$ of $\Diag(D_h)$ by definition. Upon identifying $X$ with $P B_1^\infty $, the operator $1 \otimes _{\nabla ^\text{Gr}} D_h$ coincides with the compression $P \Diag(D_h) P$ whose domain is $P \dom(\Diag(D_h))$. It is readily checked that $P \Cc'$ is then a core for this compression.

The operator $1 \otimes _{\nablat} D_h$ is just a bounded perturbation of $1 \otimes _{\nabla ^\text{Gr}} D_h$ (see Theorem 5.4 in \cite{SpecFlowKL}) and they therefore have the same domain. Even better, they share the same cores. Hence, it only remains to identify:
$$ 
\Aa \odot _{B_1} \dom(D) \simeq P B_1^{\text{alg}, \infty} \odot _{B_1} \dom(D) \simeq P \dom(D)^{\text{alg}, \infty } \simeq P \Cc' 
$$
which holds because $1 \in B_1$ and $B_1 \odot _{B_1} \dom(D) = \dom(D)$. Here we denote by $B^{\text{alg}, \infty }$ the sequences of finite support in $B^\infty$ which were denoted $c_0(B)$ in \cite{SpecFlowKL} Definition 3.3. Hence $\Cc = \Aa \odot _{B_1} \dom(D_h)$ is indeed a core for $1 \otimes _{\nablat} D_h$. We can now proceed with the remaining two points:
\begin{enumerate}[(1)]
\setcounter{enumi}{2}
\item
As mentionned in \cite{SpecFlowKL} below Definition 6.3, given the core $\Cc$ for $1 \otimes _{\nablat} D$, it suffices to prove that 
\begin{itemize}
\item
any $a$ in the subalgebra $\Aa$ sends $\Cc$ to itself,
\item
$[1 \otimes _{\nablat} D, a]$ defined from $\Cc$ to $X \otimes _B \Hh$ extends to a bounded operator on $X \otimes _B \Hh$.
\end{itemize}
The definition of $\Cc$ ensures that $\forall a \in \Aa, a \Cc \subseteq \Cc$. Moreover on simple tensors in $\Cc$, the operator $1 \otimes _{\nablat} D$ acts by (compare Lemma 5.1 p.21 of \cite{SpecFlowKL}):
$$ (1 \otimes _{\nablat} D)(x \otimes \xi) = x \otimes D(\xi) + \nablat(x) \xi, $$
where $\nablat(x) \in X \otimes_B \bo(\Hh)$ acts naturally on $\xi \in \Hh$. We can then calculate:
\begin{multline*}
[1\otimes _{\nablat} D, a] (x \otimes \xi) =  (1 \otimes _{\nablat} D)(a x \otimes \xi) -  a (1 \otimes _{\nablat} D)(x \otimes \xi) \\
= a x \otimes D(\xi) + \nablat(a x) \xi - a \big( x \otimes D(\xi) + \nablat(x) \xi \big) \\
= \big(\nablat(a x) - a \nablat(x) \big)\xi = \nablat(a) x \otimes \xi,
\end{multline*}
since the last part of the computation takes place in $\Aa$, where we know that $\nablat$ is a derivation. For $a \in \Aa$,  the operator $\nablat(a)$ is clearly the restriction of a bounded operator on $X\otimes_B H$.
\item
This last condition is an abbreviation for points (1) and (2) of Theorem 1.3 in \cite{SpecFlowKL}. On the ``algebraic elements'' of $\Cc$, the required stability properties are clear. The extension of $[\partial _t, 1 \otimes _{\nablat} D] (\partial _t - i \mu)^{-1} \colon \Cc \to X \otimes _B \Hh$ to a bounded operator is clear, since $[\partial _t, 1 \otimes _{\nablat} D](x \otimes \xi) = 0$ for any simple tensor of $\Cc$.
\end{enumerate}
\end{proof}
We can now state our main result:
\begin{Th}\label{Thm:Main} Let $E$ be a Hilbert bimodule over a $C^*$-algebra $B$, which is finitely generated as both left- and right-module. Suppose that $B$ is unital and equiped with an odd Rieffel spectral triple $(\Hh,D_h)$. 

If $\nabla \colon \modHilbL \to E \otimes_B \Omega^1_{D_h}$ is a two-sided Hermitian  $D_h$-connexion on $E$  and $\nablat$ the associated $D_h$-connexion on $X$ according to Proposition \ref{Prop:ExtConn}, then the operator
\[\begin{pmatrix} 0&D_v\otimes 1-i\; 1\otimes_{\underline{\nabla}} D_h\\D_v\otimes 1+i\; 1\otimes_{\underline{\nabla}} D_h&0\end{pmatrix}\]
on $X\otimes_B\Hh\oplus X\otimes_B\Hh$ equipped with the standard grading defines a spectral triple on $A=B\rtimes_E \mathbbm{Z}$ with respect to the natural action on the left by  multiplication.

If $(\pi,\Hh,D_h)$ is an even Rieffel spectral triple with grading operator $\gamma$, then 
$$(X\otimes_B\Hh,D_v\otimes \gamma+1\otimes_{\nablat}D_h)$$
is a spectral triple on $A=B\rtimes_E \mathbbm{Z}$.

In both cases, this spectral triple is a representative of the Kasparov product of the vertical and horizontal class.
\end{Th}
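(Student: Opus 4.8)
The plan is to recognize the displayed operator as a concrete unbounded representative of the internal Kasparov product of the vertical class $(X,D_v)\in KK^1(A,B)$ with the horizontal class $(\Hh,D_h)\in KK^*(B,\C)$, and then to invoke the (modified) Kaad--Lesch criterion. Almost all of the analytic input is already in place: Proposition \ref{Prop:CorrUnboundedKmod} shows that $(X_1,\nablat)$ is a correspondence from $(X,D_v)$ to $(\Hh,D_h)$, and this is precisely the package of data that the criterion consumes. The work that remains is to check that the stated operator is genuinely a spectral triple and then to verify the hypotheses of the criterion.

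First I would confirm the spectral-triple axioms. Write $D:=D_v\otimes\gamma+1\otimes_{\nablat}D_h$ in the even case, respectively the off-diagonal operator built from $T:=D_v\otimes 1+i\,(1\otimes_{\nablat}D_h)$ in the odd case. Self-adjointness and regularity of $1\otimes_{\nablat}D_h$ come from Proposition \ref{Prop:ExtConn}(ii) together with Theorem 5.4 of \cite{SpecFlowKL}, which realizes it as a bounded perturbation of the Grassmann-connexion operator; since $D_v\otimes\gamma$ is bounded on each spectral subspace, self-adjointness of the sum is then standard. For the compact-resolvent condition, each $A^{(k)}$ is finitely generated projective over $B$ (as $E$ is so on both sides), hence $\id_{A^{(k)}}$ is $B$-compact on $X$; consequently $(1+D_v^2)^{-1/2}$, which acts by $(1+k^2)^{-1/2}$ on $A^{(k)}$, is a norm-convergent sum of $B$-compacts and thus $B$-compact. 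Combined with compactness of $(1+D_h^2)^{-1/2}$ on $\Hh$, the standard sum-of-resolvents estimate in $X\otimes_B\Hh$ yields compactness of $(1+D^2)^{-1/2}$.

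Next I would verify bounded commutators on the dense subalgebra $\A_1$ of Definition \ref{Def:A1}. For gauge-homogeneous $a\in A^{(k)}\cap\Aa$ one computes directly that $[D_v,a]=k\,a$, so the commutator with $D_v\otimes\gamma$ (respectively with $D_v\otimes 1$) is bounded, while $[1\otimes_{\nablat}D_h,a]=\nablat(a)$ is bounded by the third point in the proof of Proposition \ref{Prop:CorrUnboundedKmod}. Summing over the finitely many homogeneous components of a general $a\in\Aa$ gives boundedness of $[D,a]$ on the dense $*$-subalgebra $\Aa$, and passing to its operator-$*$-algebra closure $\A_1$ completes the spectral-triple axioms.

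Finally, the Kasparov-product identification follows by feeding the correspondence $(X_1,\nablat)$ into the Kaad--Lesch criterion, using the core $\Cc=\Aa\odot_{B_1}\dom(D_h)$ constructed in the proof of Proposition \ref{Prop:CorrUnboundedKmod}. When $D_h$ is odd the two odd summands combine, through the off-diagonal packaging and the factor $i$, into an even operator to which the criterion of \cite{SpecFlowKL} applies directly; when $D_h$ is even one instead invokes the odd-even adaptation proved in the appendix, the grading $\gamma$ now serving to make $D_v\otimes\gamma$ anticommute with $1\otimes_{\nablat}D_h$ modulo bounded terms. I expect the main obstacle to lie exactly here: tracking the graded tensor-product bookkeeping so that the two summands interact as the criterion demands, and checking the positivity (Kucerovsky-type) estimate on $\Cc$. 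Once the criterion applies, $D$ represents the internal product, which is the final assertion of the theorem.
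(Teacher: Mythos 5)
Your proposal is correct and follows essentially the same route as the paper: the paper's proof of Theorem \ref{Thm:Main} is precisely the one-line combination of Proposition \ref{Prop:CorrUnboundedKmod} (the correspondence $(X_1,\nablat)$ from $(X,D_v)$ to $(\Hh,D_h)$) with the odd--even adaptation of the Kaad--Lesch criterion proved in the appendix as Theorem \ref{Theorem:products}. The extra verifications you spell out (self-adjointness, compact resolvent via the finitely generated projective spectral subspaces $A^{(k)}$, bounded commutators $[D_v,a]=ka$ and $[1\otimes_{\nablat}D_h,a]=\nablat(a)$ on $\Aa$) are in the paper subsumed into the proof of that appendix theorem, so they are added detail rather than a different argument.
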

\begin{proof}
This is now a consequence of Theorem \ref{Theorem:products} from the appendix and Proposition \ref{Prop:CorrUnboundedKmod}.
\end{proof}

\section{Examples}
\label{Sec:Examples}

Our first examples are Quantum Heisenberg Manifolds (QHM), which were introduced by Rieffel in \cite{RieffelDefQuant} as one of the original examples of Rieffel deformations. They were subsequently studied among others by Abadie (see \eg \cite{AbadieFixedPts,AbadieEE}), Chakraborty (see \cite{GeomQHM,MetricQHeisenMan}) and ourselves \cite{PairingsQHM, ChernGrensingGabriel} -- for a more complete survey of literature about QHM, see the introduction of \cite{PairingsQHM}. Quantum Heisenberg manifolds form a family of $C^*$-algebras $\QHM[c][\mu,\nu]$ indexed by $c \in \Z$ and $\mu,\nu \in \R$.

Here, we follow Definition 1 of \cite{GeomQHM} to introduce the QHM. Given $c \in \Z$, $\mu, \nu \in \R$, $\QHM[c][\mu, \nu]$ is the envelopping $C^*$-algebra of the $*$-algebra generated by
$$
 \big\{ F \in C_\text{c}(\Z \to C_\text{b}(\R \times S^1)) \big| F(x + 1, y, p) = e(c p y) F(x,y,p) \big\} 
$$
equipped with multiplication:
\begin{equation}
\label{Eqn:Comp}
 (F_1 \cdot F_2)(x,y,p) = \sum_{q \in \Z} F_1(x -(q-p) \mu ,y -(q-p) \nu,q) F_2(x - q \mu,y- q \nu, p-q)
\end{equation}
and involution:
$$F^*(x,y,p) = \overline{F(x,y,-p)} .$$ 
This $C^*$-algebra carries a pointwise continuous action $\alpha$ of the \emph{Heisenberg group} $ \HH$, \ie the subgroup of $GL_3(\R)$ of the matrices
\begin{equation}
\label{Eqn:ParamHH}
\begin{pmatrix}
1 & s & t \\
0 & 1 & r \\
0 & 0 & 1
\end{pmatrix}
, r,s,t \in \R.
\end{equation}
The explicit expression of the action $\alpha$ is (compare \cite{GeomQHM}, equation (2.4)):
$$ 
\alpha_{(r,s,t)}(F)(x,y,p) = e(p (t + c s(x-r))) F(x-r, y-s, p)
$$
where $x,y \in \R$ and $p \in \Z$. Considering only $\alpha_{(0,0,t)}$ actually yields an action of $S^1 = \R/(2 \pi) \Z$, which we consider as the gauge action for our situation. This definition leads to the spectral subspaces:
$$ \QHM[c][\mu,\nu]^{(k)} = \{ F \in \QHM[c][\mu,\nu] \big| F(x, y, p) = \delta_{k,p} F(x,y,k) \big\} \} ,$$
where $k \in \Z$ and $\delta_{k,p}$ is $1$ if $k=p$ and $0$ else. In particular, the gauge-invariant elements of $\QHM[c][\mu,\nu]$ are just the algebra $B = C(T^2)$. It is easily seen that $\QHM[c][\mu,\nu]^{(1)}$ is a Hilbert bimodule over $B$: the left and right actions are restrictions of the product on $\QHM[c][\mu,\nu]$, while the left and right $B$-valued scalar products are $\langle \xi, \eta \rangle_B := \xi^* \eta$ and ${}_B \langle \xi, \eta \rangle = \xi \eta^*$.

We denote by $\E \colon \QHM[c][\mu,\nu] \to B$ the conditional expectation induced by the gauge action, \ie 
$$ \E(F) := \int_{S^1} \alpha_{(0,0, t)}(F) dt .$$
There is a unique $T^2$-invariant and normalised trace on $B$, which is 
$$
\tau_0(b) = \int_{T^2} b(x,y) d \lambda(x,y)
$$ 
where $\lambda$ is the Lebesgue measure. Composing these two maps, we obtain a $\HH$-invariant trace $\tau$ on $\QHM[c][\mu,\nu]$ (compare \cite{GeomQHM}, p.427):
$$ \tau(F) := \tau_0 \left( \E(F) \right) = \int_{T^2} F(x,y, 0) dx dy .$$
Let us now prove that $\QHM[c][\mu,\nu]$ is a GCP using Theorem \ref{Thm:DefGCP} and the following Proposition 2.6 of \cite{PairingsQHM}:
\begin{Lem}
\label{Lem:Frame}
There are elements $\xi_1, \xi_2 \in \QHM[c][\mu,\nu]^{(1)}$ with $\xi_1^* \xi_1 + \xi_2^* \xi_2 = 1$. 
\end{Lem}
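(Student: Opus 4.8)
The plan is to reduce the identity $\xi_1^*\xi_1+\xi_2^*\xi_2=1$ to an elementary statement about two scalar functions, and then to exhibit these functions explicitly.

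First I would compute $\xi^*\xi$ for a general $\xi\in\QHM[c][\mu,\nu]^{(1)}$. Writing $f(x,y):=\xi(x,y,1)$, so that $\xi(x,y,p)=\delta_{1,p}f(x,y)$, and combining the involution $\xi^*(x,y,p)=\overline{\xi(x,y,-p)}$ with the product formula \eqref{Eqn:Comp} at $p=0$, only the index $q=-1$ survives and one obtains
$$(\xi^*\xi)(x,y,0)=\bigl|f(x+\mu,y+\nu)\bigr|^2,$$
which is a gauge-invariant element of $B=C(T^2)$ (indeed $|f|^2$ is genuinely periodic in $x$, since $f(x+1,y)=e(cy)f(x,y)$ forces $|f(x+1,y)|^2=|f(x,y)|^2$). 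Hence the Lemma is equivalent to producing two functions $f_1,f_2\in C_\mathrm{b}(\R\times S^1)$ that satisfy the membership constraint $f_i(x+1,y)=e(cy)f_i(x,y)$ together with the pointwise normalisation $|f_1(x,y)|^2+|f_2(x,y)|^2\equiv 1$; the shift by $(\mu,\nu)$ is irrelevant since this sum is a constant.

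Next I would build such functions as quasi-periodic theta-type series. For any continuous, compactly supported $\phi\colon\R\to\R$ I set $f(x,y):=\sum_{n\in\Z}\phi(x-n)\,e(cny)$; the sum is locally finite, and a direct computation gives $f(x+1,y)=e(cy)f(x,y)$ and $f(x,y+1)=f(x,y)$, so $f$ defines an element of $\QHM[c][\mu,\nu]^{(1)}$. Writing $v(x):=(\phi_1(x),\phi_2(x))\in\R^2$, expanding $|f_1|^2+|f_2|^2$ and grouping by the $y$-frequency $e(cky)$ turns the normalisation into
$$\sum_{n} |v(x-n)|^2=1 \quad\text{and}\quad \sum_{n} v(x-n)\cdot v(x-n-k)=0\ \ (k\neq 0);$$
the second family of identities is guaranteed once $v(x-n)\perp v(x-n-k)$ for all $n$ and all $k\neq0$.

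Finally I would solve these conditions explicitly. Choosing $v$ supported in $[0,2]$ makes all correlations with $|k|\ge 2$ vanish automatically (the relevant translates then overlap only at endpoints, where $v=0$), so only $k=1$ survives; there the requirements read $|v(x)|^2+|v(x+1)|^2=1$ and $v(x)\perp v(x+1)$ for $x\in[0,1]$. I would satisfy them by prescribing $|v(x)|=|\sin(\pi x/2)|$ and letting $v$ rotate at constant angular speed: for $x\in[0,1]$, $v(x)=\sin(\tfrac{\pi x}{2})\bigl(\cos\tfrac{\pi x}{2},\sin\tfrac{\pi x}{2}\bigr)$, and for $x\in[1,2]$, $v(x)=\cos(\tfrac{\pi(x-1)}{2})\bigl(-\sin\tfrac{\pi(x-1)}{2},\cos\tfrac{\pi(x-1)}{2}\bigr)$. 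This $v$ is continuous, vanishes at the endpoints $0$ and $2$, and is immediately checked to satisfy $|v(x)|^2+|v(x+1)|^2=\sin^2\tfrac{\pi x}{2}+\cos^2\tfrac{\pi x}{2}=1$ and $v(x)\perp v(x+1)$ on $[0,1]$. The resulting $\xi_1,\xi_2$ then lie in $\QHM[c][\mu,\nu]^{(1)}$ and satisfy $\xi_1^*\xi_1+\xi_2^*\xi_2=1$. The only genuinely delicate step is this last one, namely arranging a single continuous frame vector $v$ that simultaneously realises the partition-of-unity condition on $|v|^2$ and the orthogonality of its integer translates — essentially a tight-frame (Wilson-basis) condition; by contrast the reduction, the verification of membership in $\QHM[c][\mu,\nu]^{(1)}$, and the bookkeeping in the product formula are all routine.
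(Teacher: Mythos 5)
Your proof is correct. The reduction is sound: at $p=0$ the product formula indeed leaves only the $q=-1$ term, giving $(\xi^*\xi)(x,y,0)=|f(x+\mu,y+\nu)|^2$, and since you arrange $|f_1|^2+|f_2|^2\equiv 1$ globally the shift is harmless; the series $\sum_n\phi(x-n)e(cny)$ is locally finite and satisfies the quasi-periodicity $f(x+1,y)=e(cy)f(x,y)$ and $1$-periodicity in $y$; and your rotating window does verify $|v(x)|^2+|v(x+1)|^2=1$ and $v(x)\cdot v(x+1)=0$ on $[0,1]$, killing all cross-frequencies (note your sufficient conditions work even for $c=0$, where grouping by $y$-frequency would not separate terms). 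The route, however, is genuinely different from the paper's. The paper picks two real $1$-periodic functions $\chi_1,\chi_2$ with $\chi_1^2+\chi_2^2=1$, $\chi_1=1$ on $[-1/6,1/6]$ and $\chi_2=1$ on $[1/3,2/3]$; these plateau conditions force each $\chi_i$ to vanish near the endpoints of a well-chosen fundamental domain ($[-1/2,1/2]$ for $\chi_1$, $[0,1]$ for $\chi_2$), so setting $\xi_i(x,y,p)=\delta_{1,p}\chi_i(x)$ there and extending by the twisted periodicity gives continuous elements of $\QHM[c][\mu,\nu]^{(1)}$ with $\xi_i^*\xi_i=\chi_i^2$, whence the identity. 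In your language, the paper's construction is exactly the special case of your theta-series framework in which each scalar window is supported (mod $1$) in an interval of length less than $1$: the integer translates then have disjoint supports, so every correlation $\sum_n v(x-n)\cdot v(x-n-k)$ with $k\neq 0$ vanishes for trivial support reasons, and no orthogonality condition is needed — only the scalar partition of unity. Your length-$2$ window with constant-speed rotation solves the strictly harder tight-frame (Wilson/Gabor-type) problem that you yourself flag as the delicate step; this buys a more systematic viewpoint that would generalize (e.g.\ to longer or smoother windows, or a single vector-valued window), but for this Lemma it is more machinery than necessary, since the two-window short-support choice makes the delicate step disappear entirely. Your write-up is, on the other hand, more explicit than the paper's, which only sketches the final verification via covers of $T^2$.
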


\begin{proof}
For self-containment, we sketch the proof. We can find two real-valued, $1$-periodic functions $\chi_1, \chi_2$ on $\R$ such that $\chi_1^2(x) + \chi_2^2(x) = 1$ while $\chi_1(x) = 1$ for $x \in [-1/6, 1/6]$ and $\chi_2(x) = 1$ for $x \in [ 1/3, 2/3]$. We can then define $\xi_1$ in $\QHM[c][\mu,\nu]$ by $\xi_1(x,y,p) = \delta_{1, p} \chi_1(x)$ for $x \in [-1/2, 1/2]$, which we extend into elements of $\QHM[c][\mu,\nu]$. Applying the same construction to $\chi_2$ on $[0,1]$, we obtain $\xi_2$. It is easy to check using suitable covers of $T^2$ that $\xi_1^* \xi_1 + \xi_2^* \xi_2 = 1$.
\end{proof}

On the gauge-invariant subalgebra $B  \simeq C(T^2) \subseteq \QHM[c][\mu,\nu]$, the action of $\HH$ factors through an action of $G = T^2$ -- which is just the canonical action of $T^2$ on $B$. The usual differential structure on $T^2$ corresponds to $\Bb := C^\infty (T^2)$. It can equivalently be obtained from the $T^2$-smooth elements of $B$. The natural spectral triple structure on $B$ can be obtained for instance by the method of \cite{TrSpLieGpGG}. We thus recover an unbounded operator
$$ D := \partial_1 \otimes \gamma_1 + \partial _2 \otimes \gamma_2 ,$$
which is acting on $L^2(T^2) \otimes \C^2$. The left action of $\Bb$ on $L^2(T^2)$ is provided by pointwise multiplication. The matrices $\gamma_1, \gamma_2$ satisfy $\gamma_j \gamma_k + \gamma_k \gamma_j = 2 \delta_{jk}$ and $\gamma_j^* = - \gamma_j$. The spectral triple thus obtained is even with grading operator $\gamma_3 = i \gamma_1 \gamma_2$ which commutes with both $\gamma_1$ and $\gamma_2$.

Let $\partial_1, \partial_2, \partial_3 \in \HHLie$ in the Lie algebra of $\HH$ be the infinitesimal generators associated to the parametrisation \eqref{Eqn:ParamHH}. It is easy to see that they satisfy the commutation relations:
\begin{align*}
[\partial _j, \partial _3] &= 0
&
[\partial _1, \partial _2] &= \partial _3
\end{align*}
for $j = 1, 2$. We introduce a smooth version of $M^c_{\mu, \nu}$ by setting:
\begin{equation}
\label{Def:ModLisse}
\modQHMl[c][\mu, \nu] := \left\{ \xi \in M^c_{\mu, \nu}: (r,s,t) \mapsto \alpha_{(r,s,t)}(\xi) \text{ is in }C^\infty (\HH, \QHM[c][\mu,\nu]) \right\} .
\end{equation}
It follows immediately from the definition that $\HHLie$ acts by derivations on $\modQHMl[c][\mu, \nu]$. We will write $\partial _j \xi$ for the action of the infinitesimal generator $\partial _j$ on this ``smooth module''.
\begin{Prop}
\label{Prop:Nabla}
A connexion $\nabla $ on $\modQHMl[c][\mu, \nu]$ associated to the canonical spectral triple on $B$ is defined by
$$ \nabla (\xi) = \partial _1(\xi) \otimes \gamma_1 + \partial _2(\xi) \otimes \gamma_2 .$$
In other words, the ``components'' of $\nabla$ defined in Lemma \ref{Lem:DecompNabla} are $\partial _j$. Moreover, $\nabla$ is a two-sided Hermitian connexion in the sense of Definition \ref{Def:Main} and closable.
\end{Prop}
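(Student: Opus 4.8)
The plan is to read off every requirement of Definition \ref{Def:Main} from a single structural fact: the maps $\partial_1,\partial_2$ used to build $D = \partial_1\otimes\gamma_1 + \partial_2\otimes\gamma_2$ are infinitesimal generators of the $\HH$-action by $*$-automorphisms, hence they are \emph{$*$-derivations} on the smooth module $\modQHMl[c][\mu, \nu]$ (which I take as the domain $\modHilbL$), satisfying both the Leibniz rule $\partial_j(FG) = \partial_j(F)G + F\partial_j(G)$ and the compatibility $\partial_j(F^*) = \partial_j(F)^*$. Combined with the explicit scalar products $\langle \xi,\eta\rangle_B = \xi^*\eta$ and ${}_B\langle\xi,\eta\rangle = \xi\eta^*$, every algebraic clause becomes a one-line verification. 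First I would invoke Lemma \ref{Lem:DecompNabla}: putting $\nabla_j := \partial_j$, relation \eqref{Eqn:DerivNabla}, namely $\partial_j(\xi b) = \partial_j(\xi)b + \xi\partial_j(b)$, is just the right Leibniz rule, so $\nabla(\xi) = \partial_1(\xi)\otimes\gamma_1 + \partial_2(\xi)\otimes\gamma_2$ is a concrete $D$-connexion and its components are the $\partial_j$.

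I would then settle the Hermitian and two-sided conditions. Expanding $\partial_j(\langle\xi,\eta\rangle_B) = \partial_j(\xi^*\eta) = \partial_j(\xi)^*\eta + \xi^*\partial_j(\eta)$ and reading the two terms as $\langle\partial_j\xi,\eta\rangle_B$ and $\langle\xi,\partial_j\eta\rangle_B$ gives the Hermitian identity \eqref{Eqn:CompScalProdj}; the identical computation with $\xi\eta^*$ in place of $\xi^*\eta$ gives the left-scalar-product identity \eqref{Eqn:CompScalProdj2}, and the left Leibniz rule \eqref{Eqn:DerivNabla2} is immediate. For the compatibility requirements $\langle\modHilbL,\modHilbL\rangle_B\subseteq B_1$ and ${}_B\langle\modHilbL,\modHilbL\rangle\subseteq B_1$, I would note that for smooth $\xi,\eta$ the elements $\xi^*\eta$ and $\xi\eta^*$ are gauge-invariant (degree $0$) and still $\HH$-smooth; as the $\HH$-action on degree-$0$ elements factors through $T^2$, they lie in $\Bb = C^\infty(T^2)\subseteq B_1$. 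Together with the stability of $\modQHMl[c][\mu, \nu]$ under left and right multiplication by $\Bb$, this shows $\nabla$ is a two-sided Hermitian $D$-connexion.

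The remaining and most delicate point is closability, which I would deduce from Proposition \ref{Prop:FrameClosed}; this requires a ``frame'' for $E = \QHM[c][\mu,\nu]^{(1)}$ in the sense of Definition \ref{Def:FrameInf} whose elements lie in $\modQHMl[c][\mu, \nu]$. I would use the pair $\xi_1,\xi_2$ of Lemma \ref{Lem:Frame}: built from smooth functions $\chi_1,\chi_2$, they are $\HH$-smooth and so belong to $\modQHMl[c][\mu, \nu]$. The subtlety -- which I expect to be the main obstacle -- is that the frame identity for the \emph{right} module structure reads $\sum_j \xi_j\langle\xi_j,\cdot\rangle_B = \id_E$, i.e. $\sum_j \xi_j\xi_j^* = 1$ (equivalently $\sum_j {}_B\langle\xi_j,\xi_j\rangle = 1$), whereas Lemma \ref{Lem:Frame} only records the normalisation $\sum_j \xi_j^*\xi_j = 1$. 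I would resolve this by the pointwise identity $\chi_1^2 + \chi_2^2 \equiv 1$: a short convolution computation with the product \eqref{Eqn:Comp} shows that \emph{both} $\sum_j\xi_j^*\xi_j$ and $\sum_j\xi_j\xi_j^*$ equal the constant function $1$, so $(\xi_1,\xi_2)$ is a genuine frame lying in $\modQHMl[c][\mu, \nu]$. Since this module is dense and $B_1$-stable and $\nabla$ is Hermitian, Proposition \ref{Prop:FrameClosed} then yields closability, finishing the proof.
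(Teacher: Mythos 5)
Your proof is correct and follows essentially the same route as the paper: Lemma \ref{Lem:DecompNabla} together with the Leibniz and $*$-derivation properties of the generators $\partial_1,\partial_2$ yields the connexion and the two-sided Hermitian conditions (including $\langle \modQHMl, \modQHMl \rangle_B$, ${}_B\langle \modQHMl, \modQHMl\rangle \subseteq \Bb$), and closability is deduced from Proposition \ref{Prop:FrameClosed} via the frame of Lemma \ref{Lem:Frame}. Your remark on the frame orientation is a correct refinement of a detail the paper's one-line citation elides: Lemma \ref{Lem:Frame} only records $\xi_1^*\xi_1+\xi_2^*\xi_2=1$, i.e.\ $\sum_j\langle\xi_j,\xi_j\rangle_B=1$, whereas Definition \ref{Def:FrameInf} for the right module structure needs $\sum_j\xi_j\xi_j^*=1$, and as you verify both identities indeed hold thanks to the pointwise relation $\chi_1^2+\chi_2^2\equiv 1$ and the product formula \eqref{Eqn:Comp} (the extension phases $e(cpy)$ having modulus one), with the smooth choice of $\chi_1,\chi_2$ guaranteeing $\xi_1,\xi_2\in\modQHMl[c][\mu,\nu]$ as Proposition \ref{Prop:FrameClosed} requires.
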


\begin{proof}
This connexion is associated to the spectral triple induced by the unbounded operator
$$
D = \partial _1 \otimes \gamma_1 + \partial _2 \otimes \gamma_2.
$$
The commutators of $D$ with elements $b$ of $B$ is $[D, b] = \partial_1 ^{\Bb}(b) \otimes \gamma_1 + \partial _2^{\Bb}(b) \otimes \gamma_2$. We first prove that $\nabla $ is indeed a connexion in the sense of Definition \ref{Def:Conn}. Since $\alpha$ is pointwise continuous, $\modQHMl[c][\mu, \nu]$ (also denoted $\modQHMl$ for brevity) is dense in $M^c_{\mu,\nu}$. Moreover, it is clear from \eqref{Def:ModLisse} that $\modQHMl$ is stable under left and right multiplication by elements of $\Bb$. Let $\xi \in \modQHMl$ and $b \in \Bb$, then
$$
\nabla_j (\xi b) = \nabla_j (\xi) b +\xi \partial _j(b),
$$
since the action of $\HH$ restricts to that of $T^2$ on $B$ \ie $\partial_j ^{\QHMl[c][\mu,\nu]}(b) = \partial _j^{\Bb}(b)$ for any $b \in \Bb$ and $j = 1, 2$. This proves that the equation \eqref{Eqn:DerivNabla} is satisfied and thus that $\nabla $ is a connexion (\textsl{per} Lemma \ref{Lem:DecompNabla}). The same argument applies to $\partial _j^{\Bb}(b \xi)$, thereby proving that $\nabla $ is also a left-connexion. To prove Hermicity, we rely on the fact that $\partial _j$ are derivations on $\QHM[c][\mu,\nu]$. First of all, the scalar products $ \langle \modQHMl, \modQHMl \rangle_B$ and ${}_B \langle \modQHMl, \modQHMl \rangle$ are both contained in $\Bb$. We then have:
\begin{align*}
 \partial ^{\Bb}_j(\xi^* \eta) &= \xi^* \partial _j(\eta) + \partial _j(\xi)^* \eta
&
 \partial ^{\Bb}_j(\xi \eta^*) &= \xi \partial _j(\eta^*) + \partial _j(\xi) \eta^*.
\end{align*}
This proves that $\nabla $ is indeed a two-sided Hermitian connexion in the sense of Definition \ref{Def:Main}. It remains to show that $\nabla $ is closable, but this is an immediate consequence of Proposition \ref{Prop:FrameClosed} and Lemma \ref{Lem:Frame}.
\end{proof}

\begin{Rem}
The previous argument applies more generally to generalized crossed products equipped with a Lie group action such that the restriction of this action to the basis algebra $B$ actually induces the spectral triple structure on $B$ \eg ergodic actions as in \cite{TrSpLieGpGG} Theorem 5.4.
\end{Rem}

\begin{Prop}
The spectral triple we obtain by applying our Theorem \ref{Thm:Main} to the connexion defined in Proposition \ref{Prop:Nabla} coincide with the one constructed in \cite{GeomQHM} Theorem 10.
\end{Prop}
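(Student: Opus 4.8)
The plan is to make our general construction completely explicit for the quantum Heisenberg manifold and then identify it term by term with the operator of \cite{GeomQHM}, Theorem 10. The two ingredients to match are the underlying Hilbert space together with the representation of $A=\QHM[c][\mu,\nu]$, and the Dirac operator itself.

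First I would identify the Hilbert space. By Definition \ref{Def:canonicalrep}, $X$ is the completion of $A$ for the $B$-valued inner product $\langle \Xi,\Xi'\rangle_B=\E(\Xi^*\Xi')$, while $\Hh=L^2(T^2)\otimes\C^2$ is the GNS space of $B=C(T^2)$ for $\tau_0$ tensored with the Clifford module. Evaluating the inner product of $X\otimes_B L^2(T^2)$ on simple tensors, and using that $\E$ is a $B$-bimodule map together with $\tau=\tau_0\circ\E$, gives
$$\langle \Xi\otimes v,\Xi'\otimes v'\rangle=\tau_0\big(v^*\,\E(\Xi^*\Xi')\,v'\big)=\tau\big((\Xi v)^*\Xi'v'\big),$$
so that $\Xi\otimes v\mapsto \Xi v$ is a unitary $X\otimes_B L^2(T^2)\to L^2(\QHM[c][\mu,\nu],\tau)$. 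Hence the total space $X\otimes_B\Hh$ of our spectral triple is canonically $L^2(\QHM[c][\mu,\nu],\tau)\otimes\C^2$, with $A$ acting by left multiplication --- exactly the Hilbert space and representation used in \cite{GeomQHM}.

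Next I would match the Dirac operators piece by piece under this unitary. The component $1\otimes_{\nablat}D_h$ acts, by the formula recalled in the proof of Proposition \ref{Prop:CorrUnboundedKmod}, by $(x\otimes\xi)\mapsto x\otimes D_h\xi+\nablat(x)\xi$ with $\nablat=\sum_{j=1,2}\nablat_j\otimes\gamma_j$; since by Lemma \ref{Lem:DefDeriv} and Proposition \ref{Prop:Nabla} the $\nablat_j$ are precisely the extensions to $\Aa$ of the derivations $\partial_1,\partial_2\in\HHLie$, this operator becomes $\partial_1\otimes\gamma_1+\partial_2\otimes\gamma_2$, where now $\partial_j$ denotes the closure of the infinitesimal generator of the $\HH$-action on $L^2(\QHM[c][\mu,\nu],\tau)$. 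For the vertical part, $D_v$ multiplies the degree-$k$ summand $A^{(k)}$ by $k$; as the gauge action $\alpha_{(0,0,t)}$ is exactly the one-parameter subgroup generated by $\partial_3$ and acts on $A^{(k)}$ by the character $t\mapsto e(kt)$, the self-adjoint operator $D_v$ is identified with $\partial_3$ up to the standard factor $i$ relating it to the skew-adjoint generator $\partial_3$. With the grading $\gamma=\gamma_3=i\gamma_1\gamma_2$, the operator $D_v\otimes\gamma+1\otimes_{\nablat}D_h$ is therefore the Dirac--Rieffel operator
$$\sum_{j=1}^{3}\partial_j\otimes\gamma_j$$
built from all three generators of $\HHLie$, in which the third Clifford element is $\gamma_1\gamma_2$.

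Finally I would compare with \cite{GeomQHM}, Theorem 10, whose Dirac operator is assembled from the same three generators $\partial_1,\partial_2,\partial_3$ acting on $L^2(\QHM[c][\mu,\nu],\tau)\otimes\C^2$, and verify that the Clifford data there (matrices $\gamma_j$ with the relations of Definition \ref{Def:Clifford} and the induced grading) coincide with ours. I expect the only real obstacle to be bookkeeping: reconciling the sign and normalization conventions between the two constructions --- in particular pinning down the factor $i$ relating $D_v$ to $\partial_3$, identifying $\gamma_1\gamma_2$ with the third Clifford generator of \cite{GeomQHM}, and checking the skew/self-adjointness conventions for the $\gamma_j$ --- after which the coincidence of the two spectral triples is immediate from the termwise identification above.
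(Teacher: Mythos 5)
Your proposal is correct and takes essentially the same route as the paper: the core step in both is the unitary $X\otimes_B L^2(T^2)\simeq \GNS(\QHM[c][\mu,\nu],\tau)$ coming from $\tau=\tau_0\circ\E$ \textsl{via} the computation $\langle \Xi\otimes v,\Xi'\otimes v'\rangle=\tau\big((\Xi v)^*\Xi' v'\big)$, identifying the Hilbert space and the left-multiplication representation of $\QHM[c][\mu,\nu]$. Your additional termwise matching of the Dirac operators --- $1\otimes_{\nablat}D_h$ with $\partial_1\otimes\gamma_1+\partial_2\otimes\gamma_2$ and $D_v\otimes\gamma_3$ with the $\partial_3$-term, modulo the factor of $i$ and normalization conventions you flag --- is a correct elaboration of a point the paper's proof leaves implicit, since it stops after identifying the Hilbert spaces and representations.
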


\begin{proof}
The Hilbert space which we obtain from the Kasparov product is $X \otimes _B L^2(T^2)$, where $X$ is the $B$-Hilbert module induced from $\QHM[c][\mu,\nu]$ by using the conditional expectation $\E \colon \QHM[c][\mu,\nu] \to B$. The Hilbert space $L^2(T^2)$ arises from the GNS construction on $B$ with the trace $\tau_0(f) = \int_{T^2} f(x) d \lambda(x)$. 

Therefore, $X \otimes_B L^2(T^2)$ is the completion of $F \otimes b$ for $F \in \QHM[c][\mu,\nu]$ and $b \in B$ for the scalar product:
$$ \langle F_1 \otimes b_1, F_2 \otimes b_2 \rangle = \langle b_1, \E( F_1^* F_2) b_2 \rangle = \tau_0 \left( b_1^* \E(F_1^* F_2) b_2 \right) = \tau\big( (F_1 b_1)^* F_2 b_2 \big) .$$
In other words, $X \otimes _B L^2(T^2) \simeq \GNS( \QHM[c][\mu,\nu], \tau)$ where $\tau$ is the previously introduced normalised $\HH$-invariant trace on $\QHM[c][\mu,\nu]$. Thus, the Hilbert spaces and the associated representations of $\QHM[c][\mu,\nu]$ appearing in \cite{GeomQHM} Theorem 10 and by application of our Theorem \ref{Thm:Main} are the same.
\end{proof}

\section{Appendix}
\label{Section:Appendix}

\subsection{Preliminaries}
We briefly recall some basics on $KK$-theory (see the original paper \cite{MR715325} for more details). We restrict to the case of ungraded algebras.
\begin{Def} 
An \emph{even unbounded  $(A,B)$-Kasparov module} is a pair $(E,D)$ such that $E$ is a graded $B$-Hilbert module carrying an action by bounded even operators of $A$ on the right and $D$ an unbounded regular odd operator on $E$ such that
\begin{itemize}
\item $a(1+D^2)^{-1}$ extends to a compact operator on $E$ for all $a\in A$
\item the set of $a\in A$ that preserve the domain of $D$ and such that $[a,D]$ is bounded is dense in $A$.
\end{itemize}

An \emph{odd unbounded $(A,B)$-Kasparov module} is given by an even Kasparov $(A,B\otimes\mathbbm{C} l_1)$-module.
\end{Def}
It was shown by Baaj and Julg in \cite{MR715325} that there is a natural map
$$
D\mapsto q(D):=\frac{D}{\sqrt{1+D^2}}
$$
 associating to each such unbounded Kasparov module a bounded Kasparov module (which becomes a bijection on homotopy classes). 

As for bounded Kasparov modules, the odd unbounded Kasparov module have a particularly simple description, referred to as the \emph{Fredholm picture} in the bounded setting in \cite{BlackK}, Section 17.5. An \emph{unbounded odd Fredhom module} is, by definition, given by a pair $(E,D)$, where $E$ is a (trivially graded) Hilbert $B$-module carrying an action by bounded operators of $A$ on the left and $D$ an unbounded operator on $E$ such that
\begin{itemize}
\item $a(1+D^2)^{-1}$ is compact for all $a\in A$;
\item the set of all $a\in A$ that preserve the domain of $D$ and such that $[a,D]$ is bounded is dense in $A$. 
\end{itemize}
 Using a stabilization, it is easy to see that every unbounded odd Kasparov module $x$ corresponds yields canonically an unbounded Fredholm module $\hat x$ (compare Section 17.5 in Blackadar).

On the other hand, an unbounded Fredholm module $(E,D)$ yields an unbounded $(A\otimes\mathbbm{C} l_1,B)$-Kasparov module $\check x$ by setting 
$$\check x:=(E\otimes \mathbbm{C}l_1, D\otimes c)$$
where $E\otimes \mathbbm{C}l_1$ is viewed as a $B$-module and $c$ is the canonical self-adjoint generator of the Clifford algebra (as an algebra) acting by multiplication. We will refer to $\check x$ as the \emph{left hand Fredholm picture}, in order to distinguish it from $(E\otimes \mathbbm{C}l_1, D\otimes c)$ as an $(A,B\otimes \mathbbm{C}l_1)$-Kasparov module (\emph{the right hand Fredholm picture}). It is easy to see that the class of $\check x$ coincides with the class of $(E\otimes\mathbbm{C}l_1,D\otimes c)$ viewed as an $(A,B\otimes \mathbbm{C}l_1)$-Kasparov module (using Morita invariance of $KK$-theory and formal Bott-periodicity).

\subsection{The Unbounded Kasparov Product}
In this subsection, we will use the notational conventions from \cite{SpecFlowKL}.\newcommand{\mL}{\mathcal{L}}

Throughout, we fix two unbounded Kasparov modules: $(X,D_1)$ over $(A,B)$ and $(Y,D_2)$ over $(B,C)$ as in \cite{SpecFlowKL}. We denote by $\gamma_1$ and $\gamma_2$, respectively ,the (possibly trivial) grading operators on $X$ and $Y$, \ie in case that the Kasparov module is odd, we assume that it is given in the Fredholm picture. We assume throughout that $(Y,D_2)$ is essential, \ie $BY$ is dense in $Y$ and $B\Omega_{D_2}^1$ is dense in $\Omega_{D_2}^1$ (see Definition 6.2 in \locCit). We further assume that the Conventions 4.1 and 4.2 of \locCit are satisfied.

We are interested in the case where $(X,D_1)$ is odd and $(Y,D_2)$ even or odd. 

Convention 4.2 from \locCit  is in force \textsl{verbatim} -- thus as we started out with a (possibly even) Kasparov module $(Y,D_2)$, the representation of $B$ on $Y$ is by even operators and $D_2$ is an odd unbounded operator.

Furthermore, all tensor products are supposed to be equipped with the grading corresponding to the grading operator $\gamma_1\otimes\gamma_2$. The contraction map 
$$c\colon(X\hat\otimes_B\mL(Y))\otimes Y\to X\hat\otimes_B Y,\; x\otimes T\otimes y\mapsto x\otimes Ty$$ 
is then even. 

Let now $\nabla$ be a $D_2$-connexion on $X_1$. Just as in Section 5.1. of \locCit we may now define on $X\hat\otimes_A Y$ the unbounded operator $1\otimes_\nabla D_2$ with domain $\dom(\Diag(D_2))\cap X\hat\otimes Y$ by
$$
(1\otimes_\nabla D_2)(x\otimes y)=x\otimes D_2(y)+c(\nabla)(x\otimes y).
$$
Note that if $(Y,D_2)$ is even  then this defines an odd operator only if the connexion is an odd map! We will only consider such connexions. In any case, it follows from Theorem 5.4 in \locCit that this is a regular selfadjoint operator if the connexion is Hermitian.

\begin{Def}
\label{Definition:Product}
Let $\nabla\colon X_1\to X\hat\otimes_B {Y}$ be  a completely bounded $D_2$-connexion. The definition of the operator $D_1\times_\nabla D_2$ depends on the parities of $(X, D_1)$ and $(Y, D_2)$ and denotes the following operator:
\begin{enumerate}
\item If $(X,D_1)$ and $(Y,D_2)$ are odd,
\[\begin{pmatrix} 0&D_1\otimes 1-i\; 1\otimes_\nabla D_2\\D_1\otimes 1+i\; 1\otimes_\nabla D_2&0\end{pmatrix}\]
on $(X\hat\otimes_B Y)\oplus (X\hat\otimes_B Y)$ equipped with the standard grading and domain 
$$((\dom(D_1)\otimes_B Y)\cap \dom(\Diag(D_2)))\otimes \C^2 .$$
\item  If $(X,D_1)$ is odd and $(Y,D_2)$ is even,
\[D_1\otimes \gamma_2+1\otimes_\nabla D_2\]
on $X\hat\otimes_BY$ equipped with the trivial grading and domain $(\dom(D_1)\otimes_B Y)\cap \dom(\Diag(D_2))$.
\end{enumerate}
\end{Def}
It follows as in Theorem 5.5 in \locCit that this defines selfadjoint and regular operators if the connexion is Hermitian.  

For the definition of correspondence, we refer the reader to Definition 6.3 of \locCit However when $(X,D_1)$ is even and $(Y,D_2)$ is odd we replace in condition $(4)$ the operator $D_1\otimes 1$ by the operator $D_1\otimes\gamma$.

Using essentially the same techniques as in \locCit, one obtains (compare Theorem 6.7 and Theorem 7.5 therein):
\begin{Th}\label{Theorem:products} Suppose that $x=(X,D_1)$, $y=(Y,D_2)$ admits a correspondence $(X_1,\nabla^0)$, then for any (completely bounded) Hermitian $D_2$-connexion $\nabla$, the Fredholm module with operator $D_1\times_\nabla D_2$ defines a class $z$ in $KK$ which is an unbounded representative of the product of $x$ and $y$.
\end{Th}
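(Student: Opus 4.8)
The plan is to follow the strategy Kaad and Lesch use for the unbounded Kasparov product (\cite{SpecFlowKL}, Theorems 6.7 and 7.5), tracking the modifications forced by the odd--even parity combination. Concretely, I would first establish the analytic hypotheses, then verify the Connes--Skandalis product criterion, and finally reduce the new parity case to the ones already treated in \locCit by means of the Clifford-algebra stabilization recalled in the Preliminaries.

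First I would record that $D_1 \times_\nabla D_2$ is a selfadjoint regular operator: since $\nabla$ is Hermitian this is exactly the content of Theorem 5.5 in \locCit, applied to $1 \otimes_\nabla D_2$, together with the observation that the off-diagonal (resp.\ diagonal) assembly of Definition \ref{Definition:Product} preserves selfadjointness and regularity. I would then check that $(X \hat\otimes_B Y, D_1 \times_\nabla D_2)$ is an unbounded Kasparov $(A,C)$-module: the property that $a(1 + (D_1 \times_\nabla D_2)^2)^{-1}$ is compact follows from the compactness of the resolvent of $D_1$ on $X$ and of $D_2$ on $Y$, propagated through the tensor product by the correspondence structure; and the density of elements of $A$ with bounded commutator is guaranteed because the dense subalgebra preserves the relevant core $\mathcal{C}$ with bounded commutator, exactly as in the computation carried out in the proof of Proposition \ref{Prop:CorrUnboundedKmod}.

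The heart of the argument is the verification of the product conditions. Using the (possibly unbounded) connexion criterion in the form of \locCit (ultimately Kucharczyk's criterion \cite{Kucpub}), it suffices to check two things on the dense domain: the connexion condition, namely that $D_1 \times_\nabla D_2$ is a $D_2$-connexion for $D_1 \otimes 1$ (resp.\ for $D_1 \otimes \gamma$ in the even-$Y$ case, following the modification of condition $(4)$ indicated after Definition \ref{Definition:Product}), which is built into the assembly through $\nabla$; and the Connes--Skandalis positivity, namely that the graded commutator of $D_1 \times_\nabla D_2$ with $D_1 \otimes 1$ is bounded below modulo a completely bounded error on the range of $a$. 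The correspondence hypothesis $(X_1, \nabla^0)$ supplies precisely this: the completely bounded Hermitian connexion controls the cross-term, and the difference $\nabla - \nabla^0$ contributes only a bounded operator, just as in Proposition \ref{Prop:CorrUnboundedKmod}, where $1 \otimes_{\nablat} D_h$ was seen to differ from the Grassmann connexion by a bounded perturbation.

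Finally, the genuinely new point is the parity. Kaad and Lesch's Theorems 6.7 and 7.5 are proved for their parity conventions; the odd--even combination here I would reduce to their setting by passing through the Fredholm pictures of the Preliminaries. Replacing the odd module $x$ by its even Fredholm avatar $\check x = (X \otimes \mathbbm{C} l_1, D_1 \otimes c)$ makes the product fall under the even-module theorems of \locCit, after which one transports the conclusion back using Morita invariance and formal Bott periodicity, exactly as in the identification of $\check x$ with the right-hand picture. The main obstacle I anticipate is purely this graded bookkeeping: one must ensure that the extra Clifford generator $c$ and the grading $\gamma_2$ interleave correctly, so that $1 \otimes_\nabla D_2$ remains an odd map and the assembled operator agrees on the nose with the matrix (resp.\ with $D_1 \otimes \gamma_2 + 1 \otimes_\nabla D_2$) of Definition \ref{Definition:Product}. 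This is the ``yoga of graded tensor products'' alluded to in the introduction; getting the signs and the placement of the generators consistent is where the real care is needed, while the analytic estimates themselves are then inherited essentially verbatim from \locCit.
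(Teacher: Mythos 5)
Your opening two paragraphs do match the paper's proof: selfadjointness and regularity of $D_1\times_\nabla D_2$ via Theorem 5.4 of \locCit and (thanks to the modified condition $(4)$ in the definition of correspondence) Theorem 1.3 of \locCit, with $\nabla-\nabla^0$ contributing only a bounded perturbation, and the Kasparov-module properties as in Theorem 6.7 of \locCit. The genuine gap is in your final step, which is precisely the new content of the odd--even case. You propose to replace $x$ by $\check x=(X\otimes\mathbbm{C}l_1,D_1\otimes c)$ so that ``the product falls under the even-module theorems of \locCit'' and then to transport the conclusion back by Morita invariance and formal Bott periodicity. But \locCit proves the product theorem only for the odd--odd combination (as the paper itself notes: ``the case where both modules are odd being shown in \locCit''), so there is no even-case theorem of \locCit to reduce to. Moreover, even granting some such statement, Morita invariance and Bott periodicity act on $KK$-\emph{classes}; the theorem to be proved is that the \emph{specific} operator $D_1\otimes\gamma_2+1\otimes_\nabla D_2$ of Definition \ref{Definition:Product} represents the product, and that identification is exactly the ``graded bookkeeping'' you flag as the main obstacle and then leave unresolved.

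What the paper actually does at this point is not a reduction but a direct verification of Kucerovsky's criterion (\cite{Kucpub}, Theorem 13) for the Clifford-stabilized class: it forms $Z:=(\mathbbm{C}l_1\otimes X)\tilde\otimes_B Y$ with grading $\gamma\otimes 1\otimes 1$ and $Z'$, the same module with grading $\gamma\otimes 1\otimes\gamma_2$, and constructs an explicit \emph{graded unitary} $\theta\colon Z'\to Z$, defined case by case through multiplication by $c=(1,-1)\in\mathbbm{C}l_1$ or $d=-c$ according to the parities of the Clifford component and of $\eta\in Y$. Setting $D:=c\otimes D_1\times_{\nabla^0}D_2$ and $D':=\theta^{-1}D\theta$ (after first replacing $\nabla$ by $\nabla^0$ via the bounded-perturbation argument, as you correctly anticipate), the connexion condition of Kucerovsky's theorem is then checked by a signed case-by-case computation with the operators $T_{v\otimes\xi}$; the analytic conditions are inherited from Section 7 of \locCit. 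This unitary $\theta$ is the mechanism that makes the extra generator $c$ and the grading $\gamma_2$ ``interleave correctly'': without the regrading, the connexion condition fails by signs on $Z$ with its natural grading, so it is \emph{not} ``built into the assembly through $\nabla$'' as your third paragraph asserts. Since your proposal names this difficulty but supplies neither $\theta$ nor the case-by-case verification, it does not, as written, prove the odd--even case.
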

\begin{Rem} That is, the bounded transform $q(\check z)$ is a Kasparov product of $q(\check x)$ and $q(\check y)$.
\end{Rem}
\begin{proof} 
It remains to prove the case of $x$ is odd and $y$ is even, the case where both modules are odd being shown in \locCit We first need to check that $(X\otimes_B Y,D_1\times_\nabla D_2)$  defines an unbounded Fredholm module. First of all, the operator $1\otimes_\nabla D_2$ (compare Theorem 5.4. of \locCit and \cite{NCG} Chapter 6, Section 3, Lemma 1) defined  by 
$$
(1\otimes_\nabla D_2)(\xi\otimes\eta)=\xi\otimes D_2\eta+\nabla_\xi(\eta)
$$
is selfadjoint and regular by Theorem 5.4 in \cite{SpecFlowKL}. Now as we have replaced condition $(4)$ in the definition of correspondence, we may apply Theorem 1.3 of \locCit  to conclude that $D_1\otimes\gamma+1\otimes_{\nabla^0}D_2$ is selfadjoint and regular. So this follows also for the operator $D_1\times_{\nabla}D_2$, as it is a bounded perturbation of the former operator. It follows as in the proof of Theorem 6.7 in \locCit that we actually obtain a Kasparov module.

Let us now show that this Fredholm module actually represents the Kasparov product of $(X,D_1)$ and $(Y,D_2)$. Since the analytic details are almost \emph{verbatim} the same as in \locCit, we will only treat the different algebraic aspects. As in the proof of Theorem 7.5. in \locCit, we may assume that $\nabla=\nabla_0$ by a perturbation argument. Denote by $\gamma$ the grading operator on $\mathbbm{C}l_1$, by $Z:=(\mathbbm{C}l_1\otimes X)\tilde\otimes_B Y$ the tensor product of $\mathbbm{C}l_1\otimes X$ with $Y$ over $B$ with grading operator $\gamma\otimes 1\otimes 1$, and by $Z'$ the same tensor product with grading operator $\gamma\otimes 1\otimes\gamma_2$ (note that we view $Z$ and $Z'$ as a $(\mathbbm{C}l_1\otimes A,C)$-correspondence). Let $c=(1,-1)\in\mathbbm{C}l_1$, $d:=-c$ and define a graded unitary isomorphism $\theta:Z'\to Z$ by setting
\[
\theta(x\otimes \xi\otimes\eta):=
\begin{cases}
x\otimes \xi\otimes\eta&\text{ if } \partial(x)=0=\partial(\eta)\\
cx\otimes\xi\otimes\eta&\text{ if }\partial(x)=1=\partial(\eta)\\
x\otimes \xi\otimes\eta&\text{ if }\partial (x)=1,\partial(\eta)=0\\
dx\otimes\xi\otimes\eta&\text{ if }\partial(x)=0,\partial(\eta)=1
\end{cases}\]
Let $D:=c\otimes D_1\times_{\nabla^0} D_2$, $D':=\theta^{-1}D\theta$ and $x:=(Z,D)$. We will show that $\theta^*\theta^{-1}_*(\check x)=(Z',D')$ is a representative of the Kasparov product $\check x\otimes_B y$. Note that $D'$ defines an odd operator on $Z'$, $\theta$ being a graded isomorphism and $c\otimes D_1\times_{\nabla^0} D_2$ an odd operator on $Z$. As in \locCit,  we have to check the conditions of \cite{Kucpub}, Theorem 13. The connexion condition (i) follows by a case-by-case calculation. For example, assume that $v\otimes \xi\in \mathbbm{C}l_1\otimes X$ with $\partial v=1$, $\xi\in\dom(D_1)\cap X_1$ and $\partial \eta=0$, then if we have for all $\xi\in\dom(D_2)$:
\begin{align*}
&T_{v\otimes\xi} D_2(\eta)-(-1)^{\partial v} D'(v\otimes\xi\otimes \eta)\\
=&T_{v\otimes\xi} D_2(\eta)+\theta^{-1}(c\otimes D_1\times_{\nabla^0} D_2)\theta(v\otimes\xi\otimes \eta)\\
=&v\otimes \xi\otimes D_2\eta+\theta^{-1}(c\otimes D_1\otimes\gamma+c\otimes 1\otimes_{\nabla^0}D_2)(v\otimes\xi\otimes \eta)\\
=&v\otimes\xi\otimes D_2\eta+\theta^{-1}(cv\otimes\xi\otimes D_2\eta+cv\otimes\nabla^0_\xi\eta)+\theta^{-1}(cv\otimes D_2\xi\otimes\eta)\\
=&v\otimes\xi\otimes D_2\eta+dcv\otimes\xi\otimes D_2\eta+\theta^{-1}(cv\otimes \nabla^0_\xi\eta+cv\otimes D_2\xi\otimes\eta)\\
=&\theta^{-1}(cv\otimes \nabla^0_\xi\eta+cv\otimes D_2\xi\otimes\eta).
\end{align*}
The other cases are similar. We leave the remaining (analytic) details to the reader -- these follow as in Section 7 of \locCit
\end{proof} 
\paragraph{Acknowledgements}

The authors are grateful to G. {Skandalis} for discussions and remarks, T. Masson for references regarding derivations and connexions, and V. Alekseev for interesting discussions. We would also like to thank J. Kaad for numerous discussions and advice concerning the article \cite{SpecFlowKL}.

\medbreak

Olivier \textsc{Gabriel}, \texttt{ogabriel@uni-math.gwdg.de}, 

Mathematisches Institut -- Universität Göttingen

Bunsenstr. 3-5 D--37 073 Göttingen, Germany

\bigbreak

Martin \textsc{Grensing}, \texttt{martin@grensing.net}

D\'epartement de Math\'ematiques -- Universit\'e d'Orl\'eans, 

B.P. 6759 -- 45 067 Orl\'eans cedex 2, France.

	\bibliographystyle{alpha} 
	\bibliography{biblio}

\end{document}